\documentclass{article}
\usepackage{amsmath, amsthm, amssymb, amsfonts}
\usepackage[dvipsnames]{xcolor}
\usepackage{tikz,tikz-network,tikz-cd}
\usepackage{tcolorbox}
\usepackage{graphicx}
\usepackage{physics}
\usepackage{dsfont}
\usepackage{appendix}
\usepackage{dsfont}
\usepackage{arydshln}
\usepackage{mathtools,enumerate}
\usepackage{caption}
\usepackage{authblk}
\usepackage{stmaryrd}
\usepackage[a4paper]{geometry}
\usepackage{comment}

\newcommand{\Z}{\mathbb{Z}}
\newcommand{\R}{\mathbb{R}}
\newcommand{\C}{\mathbb{C}}
\newcommand{\N}{\mathbb{N}}

\newcommand{\per}{\mathrm{per}}
\newcommand{\ab}{\mathrm{ab}}
\newcommand{\uni}{\mathrm{uni}}
\newcommand{\T}{\mathbb{T}}

\newcommand{\EE}{\mathrm{e}}
\usepackage{etoolbox} \apptocmd{\thebibliography}{\setlength{\itemsep}{1pt}  \setlength{\parskip}{0pt} }{}{}

\title{A sandwich theorem for the spectrum of graph coverings}
\author[]{Charles Bordenave, Wenbo Li and Joe Thomas}

\date{}
\begin{document}
\maketitle

\newtheorem{theorem}{Theorem}[section]
\newtheorem{proposition}[theorem]{Proposition}
\newtheorem{corollary}[theorem]{Corollary}
\newtheorem{lemma}[theorem]{Lemma}
\newtheorem{definition}[theorem]{Definition}
\newtheorem{remark}[theorem]{Remark}
\newtheorem{question}{Question}[theorem]
\newtheorem{conj}[theorem]{Conjecture}
\newtheorem{eg}[theorem]{Example}


{
\newcommand{\JamesKirk}{12 mm}
\newcommand{\Spock}{6 mm}
\newcommand{\CaptainPike}[3]{
  \vcenter{\vbox to #2{
    \vfil
    \hbox to #1{\hfil$ #3$\hfil}
    \vfil
  }}
}
\newcommand{\JamesKirkrow}[4]{
  \CaptainPike{\JamesKirk}{\JamesKirk}{#1} &
  \CaptainPike{\Spock}{\JamesKirk}{#2} &
  \CaptainPike{\Spock}{\JamesKirk}{#3} &
  \CaptainPike{\JamesKirk}{\JamesKirk}{#4}
}
\newcommand{\Spockrow}[4]{
  \CaptainPike{\JamesKirk}{\Spock}{#1} &
  \CaptainPike{\Spock}{\Spock}{#2} &
  \CaptainPike{\Spock}{\Spock}{#3} &
  \CaptainPike{\JamesKirk}{\Spock}{#4}
}
\newcommand{\USSEnterprise}[4]{
  \begingroup
  \setlength{\arraycolsep}{0pt}
  \renewcommand{\arraystretch}{0}
  \setlength{\dashlinedash}{1.2pt}
  \setlength{\dashlinegap}{1.2pt}
  \left(
  \begin{array}{@{}c:c:c:c@{}}
    \JamesKirkrow#1 \\ \hdashline
    \Spockrow#2 \\ \hdashline
    \Spockrow#3 \\ \hdashline
    \JamesKirkrow#4
  \end{array}
  \right)
  \endgroup
}

{
\newcommand{\Picard}{11 mm}
\newcommand{\Riker}{5.5 mm}
\newcommand{\Data}[3]{
  \vcenter{\vbox to #2{
    \vfil
    \hbox to #1{\hfil$ #3$\hfil}
    \vfil
  }}
}
\newcommand{\Picardrow}[4]{
  \Data{\Picard}{\Picard}{#1} &
  \Data{\Riker}{\Picard}{#2} &
  \Data{\Riker}{\Picard}{#3} &
  \Data{\Picard}{\Picard}{#4}
}
\newcommand{\Rikerrow}[4]{
  \Data{\Picard}{\Riker}{#1} &
  \Data{\Riker}{\Riker}{#2} &
  \Data{\Riker}{\Riker}{#3} &
  \Data{\Picard}{\Riker}{#4}
}
\newcommand{\USSEnterpriseD}[4]{
  \begingroup
  \setlength{\arraycolsep}{0pt}
  \renewcommand{\arraystretch}{0}
  \setlength{\dashlinedash}{1.2pt}
  \setlength{\dashlinegap}{1.2pt}
  \left(
  \begin{array}{@{}c:c:c:c@{}}
    \Picardrow#1 \\ \hdashline
    \Rikerrow#2 \\ \hdashline
    \Rikerrow#3 \\ \hdashline
    \Picardrow#4
  \end{array}
  \right)
  \endgroup
}
\begin{abstract}
    We show that all normal covers of a fixed finite graph \(G\) sitting between the universal and maximal abelian covers share the same atomic part of their spectral measures. We give an explicit double formula for the mass of atoms which generalises and unifies known results on universal covering trees and maximal abelian covers. It also proves an extension of a conjecture in physics literature on hyperbolic lattices. Moreover, we also establish the logarithmic H\"older regularity of the continuous part of the spectral measures. Our proof combines techniques from von Neumann algebras, matching theory and the monotone labelling method.
    
    In an appendix, we prove a converse generalised Gallai-Edmonds structural theorem which might be of independent interest. 
\end{abstract}

\section{Introduction }\label{sec:intro }

\paragraph{Motivation.} In mathematical physics, the spectral theory of infinite graphs with certain symmetries, especially those that allow a free and co-compact group action has drawn a lot of attention over the years, see for example \cite{10.1007/3-540-16777-3_74,MoharWoess1989,Sunada1994,zbMATH07207199,DresselhausDresselhausJorio2008}. When such a group action exists, the infinite graph is actually a normal cover of a finite graph $G$, with the deck group $\Gamma$ acting on it.  Denote by $G_\Gamma$ the corresponding covering graph. Assume both \(G\) and \(G_\Gamma\) are connected. If the deck group is $\Gamma \cong \pi_1(G)$, the fundamental group of the graph, then $\Gamma$ is free and $G_{\pi_1(G)} \cong G^{\uni}$ is the universal covering tree of $G$. If the deck group is $\Gamma \cong H_1 (G;\mathbb Z)$
then $G_\Gamma \cong G^{\mathrm{ab}}$ is the maximal abelian cover of $G$. When $\Gamma$ is a Fuchsian group, then $G_\Gamma$ is referred to as a hyperbolic lattice in the physics literature.

The goal of this work is to establish a surprising comparison theorem  between the spectral measures (called density of states in physics) of normal coverings of a given finite graph for different choices of deck groups $\Gamma$ whose first Betti number is maximal. Under this topological assumption, our result notably proves a conjecture made in the physics literature \cite{bzduvsek2022flat} of equality of atoms for the spectral measures (called flat bands in physics) of the corresponding normal coverings. When this assumption fails, we shall also provide counter-examples of their conjecture. Importantly, beyond atoms, our comparison theorem also allows one to prove logarithmic H\"older regularity of the continuous part of the spectral measures.

\paragraph{Spectral measure of Schr\"{o}dinger operators of graph covers.} 

Fix a finite graph $G=(V,E)$ (possibly with multiple edges and loops) and choose an arbitrary orientation on each of the edges to get a collection of oriented edges $E_+$. The reverse edge of $e \in E_+$ is denoted by $\bar e$. A Schr\"{o}dinger operator $\mathcal{H}$ is a linear operator on $\ell^2(V)$ so that 
    $$\label{eq:Schrodinger_op}
    \mathcal{H}=\sum_{e\in E_+}w_eA_e+w_{\bar{e}}A_{\bar{e}}+\mathcal{V},$$
where  for $e$ joining the vertex $u$ to $v$, $A_e$ is the matrix with $1$ in the $(u,v)$ entry and $0$ elsewhere, $w_{\bar{e}}=\bar{w_e} \ne 0$, 
and $\mathcal{V}$ is a diagonal matrix with real entries $(\mathcal V_v)_{v \in V}$. Note that the operator $\mathcal H$ is self-adjoint. 

Let \(\pi:G_\Gamma\to G\) be a normal cover of the finite graph \(G\) with deck
group \(\Gamma\). Any Schr\"odinger operator \(\mathcal H\) on
\(G\) lifts to a periodic Schr\"odinger operator
\(\mathcal H_\Gamma\) on \(G_\Gamma\) that is invariant under the group action of \(\Gamma\). The spectral measure of \(G_\Gamma\) with respect to \(\mathcal H_\Gamma\)  is the probability measure defined by
\begin{equation}\label{eq:DOS}
\mu_{G_\Gamma}(B)
:=
\frac{1}{|V|}
\sum_{v\in V_G}
\left\langle
\mathbf 1_B(\mathcal H_\Gamma)\delta_{\widetilde v},
\delta_{\widetilde v}
\right\rangle,
\qquad B\subseteq\mathbb R\; \text{is a Borel set},
\end{equation}
where for each \(v\in V = V(G)\), we choose its lift \(\widetilde v\in p^{-1}(v)\). Here \(\mathbf 1_B(\mathcal H_\Gamma)\) is defined by functional calculus, the orthogonal projection onto the spectral subspace of
\(\mathcal H_\Gamma\) corresponding to \(B\). The case where \(B=\{\theta\}\) for \(\theta\in \R\) is of particular interest in operator algebras and mathematical physics. When \(\mu_{G_\Gamma}(\{\theta\})>0\), \(\theta\) is an \(\ell^2\)-eigenvalue of \(\mathcal H_\Gamma\) and \(\mu_{G_\Gamma}(\{\theta\})\) is called its (renormalised) \textit{multiplicity}. 
\paragraph{Atomic multiplicity. }  For certain \(\Gamma\), the location and multiplicity of \(\ell^2\)-eigenvalues can be computed explicitly. When \(G_\Gamma\) is the universal cover, the exact formula for multiplicities is given by Salez \cite[Theorem 2]{salez2020spectral} in the language of unimodular graphs and by Banks, Garza-Vargas and Mukherjee {\cite[Theorem 3.1]{banks2022point}} in terms of Aomoto
sets on \(G\). When \(G_\Gamma\) is the maximal abelian cover, Magee, Sabri and the second and third named authors \cite{LiMageeSabriThomas} gave a
matching polynomial criterion for the location of \(\ell^2\)-eigenvalues. This was used to solve a conjecture of Higuchi and Nomura \cite{higuchi2009spectral}, proving the absence of eigenvalues for maximal abelian covers of regular multi-graphs. Building on these results, Spier \cite[Theorems 7 and 8]{spier2025eigenvalues} proved that the universal and the maximal abelian cover of a finite multi-graph always have the same eigenvalues, thereby extending the result in \cite{LiMageeSabriThomas}. 

The multiplicity of eigenvalues can sometimes also be studied with the absence of an exact formula. For example, for \(\Gamma\) satisfying the strong Atiyah conjecture, the multiplicity is always a rational number. Furthermore, when \(\Gamma\) is torsion-free there will be no \(\ell^2\)-eigenvalues when \(G\) is a bouquet. This applies for example to right-angled Artin and Coxeter groups and to all one-relator groups as shown by Linnell, Okun and Schick   \cite{zbMATH06050472} and  S{\'a}nchez-Peralta \cite{zbMATH08052380} respectively. More recently, the first named author extended the monotone labelling 
method \cite{bordenave2026logarithmic} developed by Sen, Vir\'ag and himself \cite{Bo.Se.Vi2017} to exclude \(\ell^2\)-eigenvalues on Cayley graphs on \(\Gamma\) satisfying an indicability condition where \(\Gamma\) can even have torsion. The same method has been subsequently applied by Nachmias \cite{nachmias2026regularhyperbolictilingsell2} to exclude \(\ell^2\)-eigenvalues on regular-tilings of the hyperbolic plane, corresponding to the surface group. 

In the physics literature, there has been a growing interest on {non-abelian} coverings, particularly when \(G_\Gamma\) has a Fuchsian deck group where these are referred to as hyperbolic lattices. Tight-binding models on such graphs have been realised in experiments, and the non-abelian symmetries have led to the development of non-abelian Bloch theory \cite{kollar2019hyperbolic,maciejko2022automorphic,Lenggenhager2023Nonabelian,Boettcher2022}.

\paragraph{Main result. }
Let \(\pi_1: X_1\rightarrow X\) and \(\pi_2: X_2\rightarrow X\) be two covering maps. We say \(\pi_1\) factorises through \(X_2\) if there exists a covering map \(\pi': X_1\rightarrow X_2\) such that 
\(\pi_2 \circ \pi'=\pi_1\).
The universal covering map  $G^{\uni} \to G$ factorises through $G_\Gamma$ for any normal cover \(\pi_\Gamma : G_\Gamma\rightarrow G\). Also, $\pi_\Gamma$ always factorizes through a covering graph \(G_{\Gamma^\ab} \) with deck group \(\Gamma^\ab=\Gamma/[\Gamma,\Gamma]\). 

In the physics literature  \cite{bzduvsek2022flat}, Bzdu{\v{s}}ek and Maciejko have observed that for certain \(G_\Gamma\), the atomic part of the spectral measure of \(G_\Gamma\) is the same as that of \(G_{\Gamma^\ab}\). They conjectured that this was true for all Fuchsian groups \(\Gamma\). This, however, fails in general without further assumptions, as we shall see Section \ref{Section:eg}. On the other hand,   we reveal a rigidity result of the atomic part of spectral measure and a logarithmic regularity control for the continuous spectrum.

\begin{theorem}\label{conj}
     Let \(G_\Gamma\) be a connected normal cover of a finite connected graph \(G = (V,E)\) with deck group \(\Gamma\). If the covering map factorises through \(G^\ab\), then there exists constants \(c,\alpha>0\) depending on \(G\), \(\Gamma\) and the prescribed \(\mathcal H\), such that for any small enough closed interval \(I\subset \mathbb R\),  
\[
\left|\mu_{G_\Gamma}(I)
-
\mu_{G^{\mathrm{ab}}}(I)\right|\leq \frac{c}{\log^\alpha (1 / |I|) }.
\]
Moreover, for any \(\theta \in \mathbb R\), we have the following double formula for possible \(\ell^2\)-eigenvalues
\begin{equation}
\label{eq:sandwich} \mu_{G_\Gamma}(\{\theta\})=\frac{1}{|V|}\max_{S\in\mathcal A_\theta(G)}
\left(\operatorname{cc}S-|\partial S|\right)= \frac{1}{|V|}\min_{\gamma\in\mathcal C(G)}
\operatorname{mult}_\theta(G\setminus\gamma),\end{equation}
where \(\mathcal A_\theta (G)\) and \(\mathcal C(G)\) are, respectively, the collection of Aomoto sets and \(2\)-regular subgraphs of \(G\), and \(\operatorname{mult}_\theta(G\setminus\gamma)\) is the multiplicity of \(\theta\) as zero of the generalised matching polynomial of \(G\setminus\gamma\).
\end{theorem}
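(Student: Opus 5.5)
The plan is a sandwich. Since the covering maps factorise as $G^{\uni}\to G_\Gamma\to G^{\ab}\to G$, I will show two inequalities for atoms,
\[
\mu_{G^{\uni}}(\{\theta\})\le \mu_{G_\Gamma}(\{\theta\})\le \mu_{G^{\ab}}(\{\theta\}),
\]
and then identify the two ends with the two sides of \eqref{eq:sandwich}. Salez's formula \cite{salez2020spectral} and the Aomoto-set formula of \cite{banks2022point} give $\mu_{G^{\uni}}(\{\theta\})=\frac1{|V|}\max_{S\in\mathcal A_\theta(G)}(\operatorname{cc}S-|\partial S|)$. For the abelian end, the Floquet--Bloch decomposition writes $\mu_{G^{\ab}}(\{\theta\})$ as the generic multiplicity of $\theta$ as a root of $\det(\theta-\mathcal H(\xi))$, $\xi\in\mathbb T^{b_1}$. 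Expanding this determinant over spanning subgraphs made of edges and cycles, as in \cite{LiMageeSabriThomas}, gives a Laurent polynomial in the characters whose coefficients are indexed by $2$-regular $\gamma$. The coefficient attached to $\gamma$ is, up to sign, the generalised matching polynomial of $G\setminus\gamma$, and different $\gamma$ are separated by distinct monomials because the Betti number is maximal. The generic order of vanishing at $\theta$ is therefore $\min_\gamma \operatorname{mult}_\theta(G\setminus\gamma)$. Equality of the two ends is Spier's theorem \cite{spier2025eigenvalues}. If the multiplicity version is not available there, I would prove it via a Gallai--Edmonds type argument: an optimal Aomoto set $S$ yields some $\gamma$ with $\operatorname{mult}_\theta(G\setminus\gamma)=\operatorname{cc}S-|\partial S|$. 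This is presumably where the appendix converse Gallai--Edmonds theorem enters.

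For the lower inequality I will use von Neumann dimension. The spaces $\ker(\mathcal H_\Gamma-\theta)$ and $\ker(\mathcal H^{\uni}-\theta)$ have von Neumann dimension equal to $|V|\mu(\{\theta\})$ over $\Gamma$ and $\pi_1(G)$ respectively. The combinatorial eigenvectors supported on an Aomoto set structure lift from $G^{\uni}$ to any quotient cover and stay $\Gamma$-independent. So the Aomoto lower bound $\operatorname{cc}S-|\partial S|$, which counts components of $S$ minus boundary constraints, should apply verbatim on $G_\Gamma$. Concretely, I would construct finitely supported eigenvectors on each lifted component of $S$ and impose vanishing on $\partial S$, then use additivity of von Neumann dimension.

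For the upper inequality I expect the main obstacle: general $\Gamma$ has no Fourier transform. Here I would use the monotone labelling method of \cite{bordenave2026logarithmic,Bo.Se.Vi2017}. The surjection $\Gamma\to\Gamma^\ab\supseteq H_1(G;\Z)$ (via the factorisation through $G^{\ab}$) gives an indicable-type structure. I would label vertices of $G_\Gamma$ by their abelian coordinates, ordered lexicographically, which is a $\Gamma$-equivariant invariant order. The standard argument then bounds $\dim_\Gamma\ker(\mathcal H_\Gamma-\theta)$ by the number of unmatched vertices in a monotone matching, and this number is the same quantity computed on $G^{\ab}$. A quantitative form of this monotone labelling argument should give, for intervals $I$, $|\mu_{G_\Gamma}(I)-\mu_{G^{\ab}}(I)|\le c\log^{-\alpha}(1/|I|)$. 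The idea is to compare both measures with finite approximations whose discrepancy decays logarithmically, as in the logarithmic Hölder bounds of \cite{bordenave2026logarithmic}. Letting $I\downarrow\{\theta\}$ then recovers equality of atoms, and combining this with the lower sandwich finishes \eqref{eq:sandwich}.
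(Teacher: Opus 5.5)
Your lower bound and your Floquet computation of $\mu_{G^{\mathrm{ab}}}(\{\theta\})$ agree in substance with the paper. In the lower bound, each lifted tree component of an Aomoto set contributes von Neumann dimension at least one to the kernel of the restriction, and the eigen-equations at the lifts of $\partial S$ cost at most $|\partial S|$.

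\textbf{The upper bound.} The genuine gap is the upper bound $\mu_{G_\Gamma}\le\mu_{G^{\mathrm{ab}}}$, which is the core of the theorem. Labelling by $\mathbb Z^{b_1}$-coordinates in lexicographic order cannot be fed into Proposition~\ref{th:monof}, for two reasons:
\begin{enumerate}[(i)]
\item that proposition requires an integer labelling that is periodic, i.e.\ constant on $\Gamma$-orbits and hence with finitely many values $n$, and the admissible interval length is tied to $\beta^{-n}$;
\item its conclusion is not a count of unmatched vertices. It reduces $\mu_{G_\Gamma}(I)$ to the proportion of bad vertices plus the spectral measure of the level subgraph on an enlarged interval.
\end{enumerate}
The paper does this one cycle at a time. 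For a nonempty $\gamma\in\mathcal C(G)$, the height $h^\gamma$ descends to $G_\Gamma$ precisely because $\pi_1(G_\Gamma)\le[\pi_1(G),\pi_1(G)]$. It becomes a periodic labelling only after passing to the $\Gamma\times\mathbb Z_N$ voltage graph, which is isomorphic to $N$ copies of $G_\Gamma$, with labels $h^\gamma \bmod N$. Bad vertices are then a $2/N$ fraction and level vertices are the lifts of $G\setminus\gamma$. One concludes by induction on $|E(G)|$ using Lemma~\ref{lemma:Hughesfree}. Your sketch gives no mechanism that produces $\min_\gamma\operatorname{mult}_\theta(G\setminus\gamma)$.

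\textbf{The missing cases.} No such reduction can treat the $\theta$ for which $\gamma=\varnothing$ is the unique minimiser, because every nonempty $\gamma$ gives a strictly worse bound. The paper needs two further arguments there:
\begin{enumerate}[(i)]
\item when $m_G(\theta)=0$, Cauchy interlacing after deleting $\operatorname{mult}_\theta(G)$ suitable vertices;
\item when $G$ is $\theta$-phobic, the spectral gap Theorem~\ref{thm:surprisethm}. It is proved via a pending subgraph satisfying $\mathrm{LMST}(\theta)$ and block Gaussian elimination in $M_V(\mathcal N\Gamma)$.
\end{enumerate}
Nothing in your plan plays this role.

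\textbf{Other points.} ``Finite approximations'' with a logarithmic rate are not available for a general $\Gamma$. The paper instead proves the one-sided bound $\mu_{G_\Gamma}(I)\le\sum_{\theta\in I}\mu_{G^{\mathrm{ab}}}(\{\theta\})+c\log^{-\alpha}(1/|I|)$ for every cover through $G^{\mathrm{ab}}$, including $G^{\mathrm{ab}}$ itself. Combining this with equality of atoms gives the two-sided interval estimate.

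Spier's theorem only matches eigenvalue sets, not multiplicities. The multiplicity inequality $\min_\gamma\operatorname{mult}_\theta(G\setminus\gamma)\le\max_S(\operatorname{cc}S-|\partial S|)$ is proved in Lemma~\ref{lem: kappa_lb}. That proof uses the bipartite graph $G_\theta$ and the acyclic $\theta$-critical components of $D_\theta(G)$; the appendix is not used.
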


\begin{figure}[ht]
\centering
\includegraphics[width=0.35\linewidth,scale=0.5]{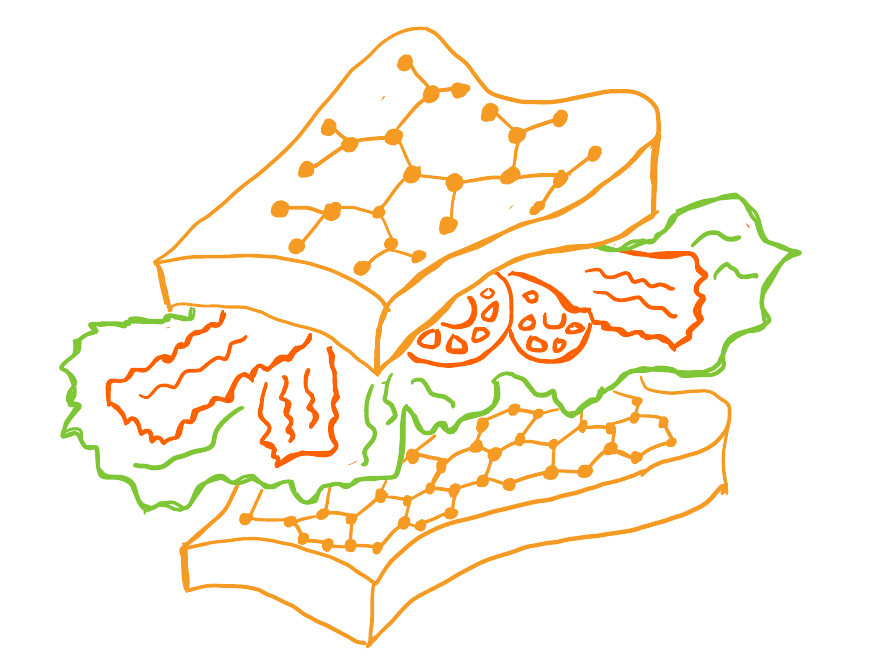}
\caption{A “fantastic” drawing of a BLT sandwich from the authors. The top and bottom slices illustrate the universal (the \(3\)-regular tree) and maximal abelian cover (the honeycomb lattice) of a theta graph, respectively.}
\end{figure}

We shall recall the definitions of Aomoto sets in Section \ref{sec:spier} and of the matching polynomial in Section \ref{sec:comb}. The first formula in \eqref{eq:sandwich} is the formula for $\mu_{G^{\uni}}(\{\theta\})$ as given  in \cite{banks2022point}.  The second formula is a formula for $\mu_{G^{\mathrm{ab}}}(\{\theta\})$ that we will obtain as a refinement of \cite{LiMageeSabriThomas}. Since $G^{\uni}$ factorises through $G_{\Gamma}$ which factorises through $G^{\mathrm{ab}}$, Theorem \ref{conj} can thus be interpreted as a sandwich theorem for $\mu_{G_\Gamma}$ between $\mu_{G^{\uni}}$ and $\mu_{G^{\mathrm{ab}}}$.

 Note that the continuous part of $\mu_{G^{\uni}}$ and $\mu_{G^{\mathrm{ab}}}$ are in fact absolutely continuous. For the familiar reader, we mention that when the above formula of atoms is established for \(G^{\uni}\) and \(G^{\mathrm{ab}}\), it holds already for \(\Gamma\) satisfying additional group-theoretic assumptions, see \cite{JaikinZapirain2021}. These conditions are satisfied for certain groups like the surface group. However, they do not hold for the generality of \(\Gamma\) considered here and further, they give no control of the continuous part of the spectrum.

We recall also that the ranks of $\Gamma^\ab$ and $H_1(G ; \mathbb Z)$ are by definition, respectively, the first Betti numbers $b_1(\Gamma)$ and $b_1(G)$.   For connected \(G\) and \(G_\Gamma\), we have \(b_1(G)=|E(G)|-|V(G)|+1\) and  $b_1 (\Gamma) \leq b_1(G)$. The factorisation of coverings assumption is equivalent to $b_1(\Gamma) = b_1(G)$. We could thus summarise Theorem \ref{conj} as follows: The spectral measure of all connected normal covers of a finite connected graph with deck groups of maximal first Betti number have the same atomic part (and a logarithmic Hölder regular continuous part).  When this maximality fails, we provide a no-go result in Proposition \ref{prop:sharp}, for example, when the non-abelian deck group \(\Gamma\) is a one-relator group or is torsion free.

The matching polynomial of the empty graph is the constant $1$ by definition. Hence, Theorem \ref{conj} implies that for any normal covers factorising through the maximal abelian cover of a graph \(G\) that contains a degree $2$ subgraph containing all of its vertices (a $2$-factor), the spectral measure is log-Hölder continuous. The same conclusion holds if $G$ is a regular graph since it is shown in  \cite{LiMageeSabriThomas} that for any \(\theta\in \R\), one can always find a \(2\)-regular subgraph \(\gamma\) such that \(\operatorname{mult}_\theta(G\setminus\gamma)=0\).

We finally note that the main results in \cite{bordenave2026logarithmic} depend on a homomorphism \(\Gamma \rightarrow \Z\), and the first author asked if more can be said when \(\Z\) is replaced by a general \(\Z^d\). Theorem \ref{conj} is one possible illustration of this phenomenon.

\paragraph{Overview of the proof and organisation of the paper.} The proof of Theorem \ref{conj} combines some techniques from topological graph theory, von Neumann algebras and algebraic matching theory.

Section \ref{sec:op_algs} introduces our setting from an operator algebra viewpoint. We first realise the covering graph $G_\Gamma$ as a voltage graph and write the Schrödinger operator $\mathcal H_\Gamma$ as an element of a finite dimensional extension over the group algebra $\mathbb C \Gamma$. The spectral projectors of $\mathcal H_\Gamma$, $\mathrm{1}_B(\mathcal H_\Gamma)$ for $B \subset \mathbb R$ Borel, are then elements of the associated finite von Neumann algebra and $\mu_{G_\Gamma} (B)$ is expressed as the von Neumann dimension of the corresponding invariant linear subspace. The basic properties of the von Neumann dimension will play a crucial role in the sequel. 

Section \ref{sec:comb} surveys some properties of the weighted matching polynomial of a finite graph including recursive identities due to Heilmann and Lieb. We then introduce the generalised Gallai-Edmonds decomposition. This theory connects structural properties of a graph with the zeroes of its matching polynomial and their multiplicities.   It was a key input in the analysis of Spier \cite{spier2025eigenvalues}.  In Appendix \ref{appendix}, we also present a converse Gallai-Edmonds theorem which could be of independent interest (but which is not used in the proof of Theorem \ref{conj}).

The actual proof of Theorem \ref{conj} starts with Section \ref{sec:abelian}. With the notation of equation \eqref{eq:sandwich}, we prove  in Theorem \ref{prop:multicriterion} the identity  $$\mu_{G^\ab} (\{\theta\})  = \frac{1}{|V|}\min_{\gamma\in\mathcal C(G)}
\operatorname{mult}_\theta(G\setminus\gamma).$$
The proof extends the previous work \cite{LiMageeSabriThomas} where it was shown that $\mu_{G^\ab} (\{\theta\}) \ne 0$ if and only if $\min_{\gamma\in\mathcal C(G)}
\operatorname{mult}_\theta(G\setminus\gamma) \ne 0$. To prove this identity, we first use harmonic analysis (or Floquet theory) to express $\mu_{G^\ab} (\{\theta\})$ in terms of the dimension of kernels of a family of Schrödinger operators on $G$. We then exploit an identity from \cite{LiMageeSabriThomas} which expresses the characteristic polynomial of  Schrödinger operators on $G$ as a weighted sum of the matching polynomials of $G\setminus \gamma$, $\gamma \in \mathcal C(G)$ (and which can be traced back to Godsil-Gutman \cite{zbMATH03722694}).

A key combinatorial idea underpinning our results is the passage of spectral problems to the analysis of the matching structure of an associated bipartite graph built with the Gallai-Edmonds decomposition in mind. This crucially provides a toolkit to study the $G_\Gamma$ of interest, replacing the Floquet-Bloch theory that is unavailable because $\Gamma$ is non-abelian. The main motif is that the subgraphs of the graph responsible for $\theta\in\mathbb{R}$ being a matching polynomial root ($\theta$-critical components) persist in being responsible for $\theta$ being an atom for $G_\Gamma$. This is revealed in the block decomposition of $\mathcal{H}_\Gamma$ when we analyse it through the operator algebraic formulation. The associated bipartite graph that we study treats each of these subgraphs as a single vertex and allows us to study how they sit within the remainder of the graph.

Such a bipartite graph appears as \(G_\theta\) in Lemma \ref{lem: kappa_lb} of Section \ref{sec:spier}, as \(W\) in the proof of Lemma \ref{lemma:Halltype} in Section \ref{sec:monotone}, and as \((X\sqcup Y,E)\) in the setting of Theorem \ref{conversedGallai-Edmonds} in the appendix. The method is motivated by both the result and the proof of a “converse” version of the classic Gallai-Edmonds theorem,  which can be traced back to at least Lov\'asz and Plummer \cite[Theorem 3.2.3]{lovasz2009matching}. We generalise it by an essential use of the multi-edge recursion relation of matching polynomials, see equation \eqref{eq:multiedgerecursion}.

 This idea in particular leads to a direct and shorter proof of Spier's theorem \cite{spier2025eigenvalues} and the establishment of its multiplicity refinement in Section \ref{sec:spier}, see Theorem \ref{thm:lowerbound}. Our aim is to establish that for any connected normal cover $G_\Gamma$ (not necessarily sitting between \(G^\ab\) and \(G^\uni\)) we have the lower bound:
$$
\mu_{G_\Gamma}(\{\theta\}) \geq \frac{1}{|V|}\max_{S\in\mathcal A_\theta(G)}
\left(\operatorname{cc}S-|\partial S|\right) \geq  \frac{1}{|V|}\min_{\gamma\in\mathcal C(G)}
\operatorname{mult}_\theta(G\setminus\gamma).
$$
  As mentioned, the second inequality builds on Spier and relies on the generalised Gallai-Edmonds decomposition theorem. The first inequality ultimately comes as a consequence of the rank nullity theorem in von Neumann algebras. 

The proof of Theorem \ref{conj} is concluded in Section \ref{sec:monotone}. We prove that for any closed interval $I$
$$
\mu_{G_\Gamma}(I)
\leq 
\sum_{\text{\(\theta \in I\) is an \(\ell^2\)-eigenvalue of \(G^\ab\)}}\mu_{G^{\mathrm{ab}}}(\{\theta\})+ \frac{c}{ \log^{\alpha} (1/|I|)},
$$
for some $c,\alpha$ when $|I|$ is small enough. The proof of this inequality depends on whether or not $\theta$ is such that $$\min_{\varnothing\neq\gamma\in\mathcal C(G)}
\operatorname{mult}_\theta(G\setminus\gamma) > 0 \qquad \text{and}\qquad 
\operatorname{mult}_\theta(G)= 0.$$ If this holds, we shall prove in Theorem \ref{thm:surprisethm} a much stronger gap phenomenon: $\theta$ is not in the spectrum
of $\mathcal H_{G_\Gamma}$ for any normal cover \(G_\Gamma\) and thus $\mu_{G_\Gamma}(\theta-\epsilon,\theta+\epsilon) = 0$ for all $\epsilon$ small enough.   This might hold intrinsic  interest in spectral graph theory, for example with connections to the full spectrum conjecture of maximal abelian covers \cite{sunada2008discrete}. 

In any other case, we use the monotone labelling technique developed in \cite{Bo.Se.Vi2017,bordenave2026logarithmic} together with a recursion on the size of the edge set of $G$. We present here a new application of the monotone labelling technique which avoids the use of randomness as is done in \cite{Bo.Se.Vi2017,bordenave2026logarithmic}. This technical novelty may find applications elsewhere.

  We conclude with Proposition \ref{prop:sharp} in Section \ref{Section:eg} to complement Theorem \ref{conj}. For any finitely generated group \(\Gamma\) that has a torsion free element in the commutator subgroup \([\Gamma,\Gamma]\), we present a finite graph \(G\)  such that $\mu_{G_\Gamma}$ and  $\mu_{G_{\Gamma^{\mathrm{ab}}}}$ have different sets of atoms while \(G_\Gamma\) and \(G_{\Gamma^{\mathrm{ab}}}\) are both connected. This in particular gives a counter-example of the conjecture made in \cite{bzduvsek2022flat} where \(\Gamma\) is a Fushian group. In this proposition,  $G$ is  constructed so that $b_1(\Gamma)=b_1(G)-1$ contrary to the assumption in Theorem \ref{conj}. A monotone labelling type of argument will be used to exclude certain eigenvalues on \(G_{\Gamma}\) while a cut vertex provides a finitely supported eigenfunction on \(G_{\Gamma^{\mathrm{ab}}}\).

\paragraph{Notation}

Throughout, we will be considering multi-graphs $G=(V,E)$, which means we allow for {multi-edges} between vertices as well as {self-loops} based at a vertex. We will refer to them simply as graphs. We denote by $\operatorname{cc} G$ the number of connected components of $G$. We denote by $\vec{E}(G)$ (or just $\vec{E}$ if the context is clear) the collection of edges of $G$ with both possible orientations. If $e$ is an oriented edge of $G$, then we let $o(e)$ denote its origin, $t(e)$ denote its terminus and \(\bar{e}\) the same edge in a different orientation. For vertices $u,v\in V$, we write $u\sim v$ if there exists an edge in $E$ connecting them. For \(S\subset G\), denote by \(\partial S=\partial S(G)\) its \textbf{vertex boundary} in \(G\), i.e., 
\[\partial S=\{v\in V\setminus S: \exists\, u\in S,  v\sim u\}.\]

We also use the following conventions:
    \begin{itemize}
        \item If $G_1$ is a subgraph of $G$, then $G\setminus G_1$ is the graph $G$ with the edges and vertices of $G_1$ removed from it along with any edges that connect vertices in $G_1$ to vertices outside of $G_1$.
        \item If $S\subset V$, then $G\setminus S$ is the graph $G$ with the induced subgraph on $S$ removed.
        \item If $F\subset E$, then $G\setminus F$ is the graph $G$ with just the edges in $F$ removed not the vertices that consist of their endpoints.
    \end{itemize} 

\paragraph{Acknowledgments.} The authors thank Jorge Garza-Vargas and Oriol Sol\'e Pi for related discussions at an early stage of this project. 

C.B. has started this work during his stay at the Institute for Advanced Study. He acknowledges support from the James D. Wolfensohn Fund and from French ANR grant Tagada (ANR-25-CE40-5672). W.L. has started this work during his visits of J.T. in Durham University and of C.B. in Institute for Advanced Study and he thanks them and Shouda Wang for the hospitality. His research is supported by the Scientific Research Innovation Capability Support Project for Young Faculty (SRICSPYF-ZY2025160) and the National Natural Science Foundation of China (Grant No. 123B2013).  J.T. has received funding from the Leverhulme Trust through a Leverhulme Early Career Fellowship (Grant No. ECF-2024-440). Part of this work was conducted while J.T. was visiting the Simons Institute for the Theory of Computing. 

\paragraph{Statement on AI.} The authors thank the USS Enterprise and her crew for inspiring human exploration of the unknown. The contents of this paper are the authors' own. LLMs were not used at any stage in the writing of this work other than for bibliographical searching. In particular, they brought the authors' attention to the physics literature \cite{bzduvsek2022flat}.

\section{Background on voltage graphs and operator algebras}

\label{sec:op_algs}

\subsection{Voltage graphs}

We begin by recasting the covers $G_\Gamma$ and their Schr\"{o}dinger operators into a spectrally equivalent formulation in the language of operator algebras. The easiest way to describe this is through voltage graphs which form a special case of what are termed as quasi-transitive graphs in \cite{bordenave2026logarithmic}. We refer to \cite[Chapter 2]{Gr.Tu2001} for further background on voltage graphs.

\begin{definition}[Derived voltage graph]
    Let \(G\) be a multi-graph and  \(\Gamma\) a group. Assign each directed edge \(e\in \vec E(G)\) an element \(g_e\in \Gamma\) such that \(g_e=g_{\bar e}^{-1}\). $G$, together with this assignment, is called a voltage graph.  The derived voltage graph \(G(\Gamma)\) is constructed as follows:
        \begin{enumerate}[(i)]
            \item the vertex set is \(V(G)\times \Gamma\),
            \item there is a (directed) edge between \((u,x),(v,y)\) whenever there is a directed edge \(e\) from \(u\) to \(v\) in \(G\) and \(y=xg_e\) in \(\Gamma\). 
        \end{enumerate} 
        Note that for any induced subgraph \(H\) of \(G\), the restriction of the voltage assignment on \(H\) also gives a derived voltage graph.
\end{definition}

By construction, $\Gamma$ has a natural free action on $G(\Gamma)$ given by the left regular representation on the $\Gamma$ factor. That is, $\Gamma$ acts on vertices of $G(\Gamma)$ by $g\cdot (u,x)=(u,gx)$ and this induces an action on the edges. 

\begin{remark}
    Note that \(G(\Gamma)\) is connected if and only if there is a vertex $v\in V(G)$ such that words in $\Gamma$ obtained by tracing out the closed walks based at $v$ generate $\Gamma$. Moreover, if \(G\) is a bouquet graph, i.e., a graph with a single vertex and several self-loops, then the derived voltage graph obtained  is the Cayley graph $\mathrm{Cay}(\Gamma,S)$ where $S$ is the collection of edge assignments on $G$.
\end{remark}

\begin{eg}\label{def of Gper}
    Let $G$ be a finite connected graph and $E_+$ an assignment of direction to each edge in $E(G)$. Let \(\Gamma=\Z^{E_+}\) be the free abelian group generating by \(E_+\). Define $E_f\in \Z^{E_+}$ to be the vector with $0$ in all but the $f$-th entry where it is $-1$ if $f\in E_+$ and $1$ if $\bar{f}\in E_+$. For any \(e\in \vec{E}(G)\), assign \(g_e=E_e\). We denote the corresponding derived voltage graph as \(G^{\per}.\) In \cite[Proposition 2.3]{LiMageeSabriThomas} it is proven that every connected component of $G^\per$ is isomorphic to the maximal abelian cover $G^\ab$. 
\end{eg}

Let $G(\Gamma)$ be a derived voltage graph constructed from a finite graph $G$ and a group $\Gamma$. Then, $G(\Gamma)$ gives a (possibly disconnected) normal covering of $G$ with deck group given by $\Gamma$ (acting by left regular representation as above). The fibre of a vertex $v\in V(G)$ is $\{v\}\times\Gamma$.

It is also true that every connected component of $G(\Gamma)$ is a normal covering of $G$. The deck group of this covering is given by the subgroup of $\Gamma$ generated by the words obtained by reading off the edge labels when traversing all of the closed walks based at any fixed vertex $v\in V(G)$.

\begin{eg}[Example \ref{def of Gper} continued] 
By construction, $G^\mathrm{per}$ has deck group $\Z^{E_+}$ whereas $G^\ab$ has deck group $H_1(G;\Z)$ which is isomorphic to $\Z^{b_1(G)}$, where $b_1(G)=\operatorname{rank} H_1(G;\Z)$ is the number of edges of $G$ that are not contained in a fixed spanning tree of $G$.
\end{eg}

On the converse, any connected normal covering graph of a connected graph \(G\) can be achieved as a derived voltage graph. To see this, let \(\pi:G_\Gamma\to G\) be a normal cover with deck group
\(\Gamma\) acting by
\[\Gamma \times V({G_\Gamma}) \rightarrow V({G_{\Gamma}}),\qquad (g,x)\mapsto g\cdot x.\] 

Fix a spanning tree \(T\subseteq G\) and choose a lift \(\widetilde T\) of it in
\(G_\Gamma\). For each \(v\in V(G)\), the choice of $\tilde{T}$ gives a chosen lift
\(\widetilde v_0\in \pi^{-1}(v)\) residing in $\tilde{T}$. For any \(\widetilde v \in V({G_\Gamma})\) with \(\pi(\widetilde v)=v\), there exists a unique \(g\in \Gamma\) such that \(\widetilde v=g\cdot \widetilde v_0\). Here existence and uniqueness is guaranteed since \(\pi\) is a normal cover and the covering graph is connected. We identify
\[
\label{eq: voltage-cover_corr}
V({G_\Gamma})
= V(G) \times \Gamma,
\qquad
g\cdot \tilde{v}_0\leftrightarrow(v, g).
\]

Let \(\vec E(G)\) be the set of oriented edges of \(G\), and let
\(\vec E(T)\subseteq \vec E(G)\) be the oriented edges belonging to \(T\). For
each \(e\in \vec E(G) \), with origin \(u\) and terminus
\(v\), there is a unique element \(g_e\in\Gamma\) such that the lift of
\(e\) starting at \(\tilde{u}_0\) ends at
\(g_e\cdot\tilde{v}_0\). By definition of the deck group,  for any \(g\in \Gamma\), the lift of
\(e\) starting at \(g\cdot \tilde{u}_0\) ends at
\(\bigl( gg_e\bigr) \cdot\tilde{v}_0\). Note that for $e\in \vec{E}(T)$, we have $g_e=\EE$, the unit of $\Gamma$.

This assignment gives $G$ a voltage graph structure and the derived voltage graph $G(\Gamma)$ is isomorphic to $G_\Gamma$ under the correspondence of the vertex set (and its induced mapping of the edges) given in \eqref{eq: voltage-cover_corr} (see also \cite[Theorem 2.2.2]{Gr.Tu2001}).

The following lemma will be used in Section \ref{sec:monotone}.
\begin{lemma}\label{lemma:Hughesfree}
    Assume that \(G\) is connected. Let $G_\Gamma$ be a normal cover of $G$ that factorises through $G^\ab$. For any connected subgraph \(H\) of \(G\), the restriction of \(\pi_\Gamma\) on each connected component of \(\pi_\Gamma^{-1}(H)\) factorises through the maximal abelian cover of \(H\).
\end{lemma}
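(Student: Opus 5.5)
Throughout we work at the level of fundamental groups. Fix a base vertex $v\in V(H)$ and a lift $\widetilde v\in\pi_\Gamma^{-1}(v)$. Let $N=\pi_{1}(G_\Gamma,\widetilde v)$, viewed through $(\pi_\Gamma)_*$ as a normal subgroup of $\pi_1(G,v)$ with quotient $\Gamma$. The hypothesis that $G_\Gamma\to G$ factorises through $G^\ab$ means exactly that
\[
N\subseteq [\pi_1(G,v),\pi_1(G,v)],
\]
the kernel of the Hurewicz map $\pi_1(G,v)\to H_1(G;\Z)$. Let $C$ be the connected component of $\pi_\Gamma^{-1}(H)$ containing $\widetilde v$. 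Then $C\to H$ is a covering, and by the lifting criterion its image subgroup is
\[
N_H:=\iota_*^{-1}(N)\subseteq\pi_1(H,v),
\]
where $\iota:H\hookrightarrow G$ is the inclusion. Indeed, a closed walk in $H$ at $v$ lifts to a closed walk in $C$ if and only if it lifts to a closed walk in $G_\Gamma$. Moreover, $N_H$ is normal in $\pi_1(H,v)$ because $N$ is normal. By the factorisation criterion for coverings, it therefore suffices to prove
\[
N_H\subseteq[\pi_1(H,v),\pi_1(H,v)].
\]
The components through other lifts of $v$, and through lifts of other base points, are translates under the deck group, so the same argument applies to them.

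The key input is that $\iota_*:H_1(H;\Z)\to H_1(G;\Z)$ is injective. For graphs, $H_1$ equals the cycle space $Z_1=\ker(\partial:\Z^{E}\to\Z^{V})$, since there are no $2$-cells. Thus $H_1(H;\Z)=\ker\partial|_{\Z^{E(H)}}$, and this is just the subgroup of $Z_1(G)$ supported on the edges of $H$. Equivalently, we may extend a spanning tree of $H$ to a spanning tree of $G$. Then the free generators of $\pi_1(H,v)$ form a subset of a free basis of $\pi_1(G,v)$, and abelianisation sends them to part of a basis of $\Z^{b_1(G)}$.

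Now let $w\in N_H$. Then $\iota_*(w)\in N\subseteq[\pi_1(G),\pi_1(G)]$, so the homology class of $\iota_*(w)$ in $H_1(G;\Z)$ vanishes. This class equals $\iota_*$ applied to the class of $w$ in $H_1(H;\Z)$. By injectivity, the class of $w$ vanishes, so $w\in[\pi_1(H),\pi_1(H)]$. Hence $C\to H$ factorises through $H^\ab$.

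The only point needing care is the justification that $C$ corresponds to $\iota_*^{-1}(N)$. This follows from unique path lifting in $G_\Gamma$ together with the fact that $C$ is the connected component containing $\widetilde v$. If one prefers to avoid topology, the same argument can be phrased with voltage assignments: the deck group of $C$ is generated by the voltages of closed walks in $H$. Factorising through the abelian cover then means that the voltage of a closed walk depends only on its cycle-space class. This follows from the corresponding property for $G$, together with the inclusion of cycle spaces $Z_1(H)\subseteq Z_1(G)$.
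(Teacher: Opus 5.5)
Your argument is correct and rests on the same key fact as the paper's proof: extending a spanning tree of $H$ to one of $G$, i.e.\ injectivity of $H_1(H;\Z)\to H_1(G;\Z)$. It is organised differently, though.

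The paper stays in the voltage-graph model. It realises $G^{\ab}$ with $H_1(G;\Z)$-voltages on the edges outside the spanning tree, and observes that $\pi_{\ab}^{-1}(H)$ splits into copies of $H^{\ab}$, indexed by transversals for $H_1(G;\Z)/H_1(H;\Z)$. It then uses that $G_\Gamma\to G^{\ab}$ sends each component $C$ of $\pi_\Gamma^{-1}(H)$ into one such copy.

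You skip the intermediate cover and argue through the Galois correspondence. You identify the subgroup of $C\to H$ as $\iota_*^{-1}(N)$ and pull $N\subseteq[\pi_1(G),\pi_1(G)]$ back via the Hurewicz map. The paper's splitting is the special case $N=[\pi_1(G),\pi_1(G)]$ of your computation, where $\iota_*^{-1}(N)=[\pi_1(H),\pi_1(H)]$. Your route is shorter and states explicitly covering-space facts the paper leaves tacit: that $C\to H$ is a covering whose subgroup is determined by that of $G_\Gamma$, and that $C$ covers the copy of $H^{\ab}$ it lands in. The paper's route keeps to the voltage language used elsewhere and gives a concrete description of $\pi_{\ab}^{-1}(H)$.

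There is one slip in your optional final paragraph. Factorising through the abelian cover means the cycle-space class of a closed walk is determined by its voltage (closed walks of trivial voltage are null-homologous). It does not mean the voltage is determined by the class; that would say $G^{\ab}$ covers $G_\Gamma$. With the direction corrected, the deduction for $H$ needs injectivity of $Z_1(H)\subseteq Z_1(G)$, exactly as in your main argument. That paragraph is not needed, so the proof stands.
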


\begin{proof}
    Fix a spanning tree $T_H$ of $H$ and extend it to a spanning tree $T$ of $G$. Now we can assign an arbitrary orientation $E^+(G)$ to the edges of $G$ which gives orientations $E^+(H)$, $E^+(T_H)$ and $E^+(T)$ to the edges of $H, T_H$ and $T$ respectively. 
    
    The maximal abelian covering $\pi_{\ab}:G^\ab\to G$ has deck transformation group $H_1(G;\Z)$ which is a free abelian group of rank $|E^+(G)\setminus E^+(T)|$. By the construction preceding the lemma, we can realise $G^\ab$ as a voltage graph of $G$ with edge labels in $H_1(G;\Z)$. For edges in $E(T)$, the labels are the identity and, since the maximal abelian cover is by definition connected, the labels $(g_e)_{e\in E^+(G)\setminus E^+(T)}$ form a set of independent generators for $H_1(G;\Z)$.

    We can naturally see $H_1(H;\Z)$ as sitting inside $H_1(G;\Z)$ generated by $(g_e)_{e\in E^+(H)\setminus E^+(T_H)}$. In fact, with this realisation, given a set of transversals $(a_i)$ for $H_1(G;\Z)/H_1(H;\Z)$, we have that $(a_ig_e)_{e\in E^+(H)\setminus E^+(T_H)}$ generates another copy of $H_1(H;\Z)$ in $H_1(G;\Z)$ for each $i$. 

    Now, the derived voltage graph construction from $H$ using any of these bases as a voltage assignment to the edges (identity on the edges in $E(T_H)$) gives a copy of the maximal abelian cover of $H$ that is naturally sitting inside the derived voltage graph of $G^\ab$ as the induced subgraph on $V(H)\times \langle a_i g_e\rangle_{e\in E^+(H)\setminus E^+(T_H)}$. In fact, each such copy is a connected component in the preimage $\pi_\ab^{-1}(H)$ since this preimage is precisely the induced subgraph on
        $$V(H)\times H_1(G;\Z) = \bigsqcup_{i} V(H) \times \langle a_ig_e\rangle_{e\in E^+(H)\setminus E^+(T_H)}.$$

    Since $G_\Gamma$ factors through $G^\ab$, we have $\pi_\Gamma = \pi_\ab \circ \pi_{\Gamma,\ab}$ where $\pi_{\Gamma,\ab}: G_\Gamma\to G^\ab$ is a normal covering. But then given any connected component $C$ in the pre-image of $\pi_\Gamma^{-1}(H)$, we have that $\pi_\Gamma(C)$ has connected image in $\pi_\ab^{-1}(H)$ and so must be contained in the induced subgraph on $V(H) \times \langle a_ig_e\rangle_{e\in E^+(H)\setminus E^+(T_H)}$ for some $i$, but this means it factors through $H^\ab$.
\end{proof}

Let \(\rho_\Gamma:\Gamma\to B(\ell^2(\Gamma))\) be the right regular representation and \(\C\Gamma\) be the associated group algebra. Fix any Schr\"odinger operator \(\mathcal H\) on \(G\) with \(A_0\) its restriction on the spanning tree
\(T\). Using the correspondence of $G_\Gamma$ with its voltage graph $G(\Gamma)$ constructed by a spanning tree $T$, the lifted Schr\"odinger operator \(\mathcal H_\Gamma\) to $G_\Gamma$ corresponds to the operator 
\[
\label{eq:lifted_schrodinger}
\mathcal H_\Gamma
=
A_0\otimes \mathrm{id}
+
\sum_{e\in \vec E(G)\setminus \vec E(T)}
w_eA_e\otimes \rho_\Gamma(g_e),
\]
acting on $\ell^2(V(G))\otimes \ell^2(\Gamma)$. We can also view $\mathcal{H}_\Gamma$ as an element of \(M_{V(G)}(\mathbb C\Gamma)\). 

 The action of the deck group on $G_\Gamma$ corresponded to the left regular representation action of $\Gamma$ on the voltage graph $G(\Gamma)$. In turn this induces an isometric action on $\ell^2(V(G))\otimes \ell^2(\Gamma)$ given by 
    \[g'\cdot \delta_v\otimes \delta_g =\delta_v\otimes \lambda_\Gamma(g')\delta_g = \delta_v\otimes \delta_{g'g}.\]
The fact that \(\mathcal H_\Gamma\) is invariant under the action of the deck transformation group is immediately seen from the fact that the left and right regular representations commute. \textbf{From now on,  we will always see the normal covers of a connected graph $G$ as a voltage graph and abuse the notation using \(G_\Gamma\) for both.}

\subsection{The von Neumann dimension and spectral measures}

In this section, we recall some facts of group von Neumann algebras and the related dimension theory. For more in this topic, see the standard monograph \cite{luck2002l2} of L\"uck. 

Recall that the group \textbf{von Neumann algebra} of a group \(\Gamma\) is defined by
\[\mathcal N\Gamma:=
\overline{\mathbb C\Gamma}^{\,\mathrm{weak}}
\subset B(\ell^2(\Gamma)),
\]
which is closed under bounded Borel functional calculus for
self-adjoint elements. Note that due to the von Neumann bicommutant theorem, \(\mathcal N\Gamma\) is exactly the collection of \(\Gamma\)-invariant bounded linear operators on \(\ell^2(\Gamma)\). 

Let \(\tau_\Gamma\) be the canonical trace on
\(\mathcal N\Gamma\) such that for any \(p\in \mathcal N\Gamma\), 
\(\tau_\Gamma(p):=\langle \delta_\EE,p\delta_\EE\rangle\) (where $\EE$ is the unit of $\Gamma$). For any \(n\in \N\), \(M_n(\mathcal N\Gamma)\) acts on \(\C^n\otimes \ell^2(\Gamma)\) in the standard way. For any \(P\in M_n(\mathcal N\Gamma)\), we use the matrix amplification of \(\tau_\Gamma\) and define
\[
\operatorname{Tr}_{M_n(\mathcal N\Gamma)}(P)
:=
\sum_{i=1}^n \tau_\Gamma(P_{ii}),
\qquad
P\in M_n(\mathcal N\Gamma).
\]

\begin{definition}
    A \textbf{\(\mathcal{N}\Gamma\)-module} \(M\) is a \(\Gamma\)-invariant linear subspace, i.e., a linear subspace of \(\C^n\otimes \ell^2(\Gamma)\) for some \(n\in \N\) that is invariant under the (amplified) left regular representation \(\mathrm{id}\otimes \lambda_\Gamma\). Let \(P=P^*=P^2\) be the projection of \(\C^n\otimes \ell^2(\Gamma)\) to the closure of \(M\). By \(\Gamma\)-invariance,  \(P\in M_n(\mathcal N\Gamma)\), and allows us to define the \textbf{von Neumann dimension} of $M$ by
\[
\dim_{\Gamma}M
:=
\operatorname{Tr}_{M_n(\mathcal N\Gamma)}(P).
\]
A homomorphism \(\varphi\) between \(\mathcal N\Gamma\)-modules \(M\) and \(N\) are linear maps between \(M\) and \(N\) that intertwines with the action of \(\Gamma\) on \(M\) and \(N\). The \textbf{rank} of \(\varphi\) is defined as \(\operatorname{rank}_\Gamma \varphi=\dim_{\Gamma} \operatorname{im}\varphi\).
\end{definition}

\begin{remark}
    In \cite{luck2002l2}, a \(\mathcal{N}\Gamma\)-module is called
a \(\mathcal{N}\Gamma\)-Hilbert module. We omit Hilbert here to avoid confusions, given these subspaces are not necessarily closed, thus a Hilbert space. It does not necessarily mean that the authors have opinions of D. Hilbert.
\end{remark}

The von Neumann dimension behaves like the standard dimension on finite dimensional vector spaces. For example, we have the following result, see, for example, \cite[Theorem 1.12]{luck2002l2}.

\begin{lemma}\label{lm:weakexact}
    If there is an injective homomorphism from a \(\mathcal N\Gamma\)-module \(M\) to \(N\), then
\[\dim_\Gamma M \leq \dim_\Gamma N.\]
In fact, for any sequence of \(\mathcal N\Gamma\)-modules
    \(M_1 \xrightarrow{\varphi} M_2 \xrightarrow{\psi} M_2\)
    where \(\varphi\) and \(\psi\) are injective and surjective, respectively, and \(\overline{\mathrm{im} \varphi}=\ker \psi\), 
    \[\dim_{\Gamma}M_2=\dim_{\Gamma}M_1+\dim_{\Gamma}M_3.\]
    
\end{lemma}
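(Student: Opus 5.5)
The plan is to go through the projection picture of the von Neumann dimension and reduce both claims to two facts: the trace is positive and faithful, and Murray--von Neumann equivalent projections have equal trace.

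\emph{First claim.} Let $\varphi: M\to N$ be an injective homomorphism, with $M\subseteq \C^m\otimes\ell^2(\Gamma)$ and $N\subseteq\C^n\otimes\ell^2(\Gamma)$. Since the modules need not be closed, I will assume, as in the framework of \cite{luck2002l2}, that $\varphi$ is bounded. It then extends to a bounded $\Gamma$-equivariant map $\bar\varphi:\overline M\to\overline N$. The extension may fail to be injective on the closure, so I will use the closures of images instead. Write $\bar\varphi = U|\bar\varphi|$ for the polar decomposition. By equivariance, both $U$ and $|\bar\varphi|$ commute with the $\Gamma$-action, so they are given by matrices over $\mathcal N\Gamma$. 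The partial isometry $U$ carries $\overline{\operatorname{im}\bar\varphi^*}=(\ker\bar\varphi)^\perp$ onto $\overline{\operatorname{im}\bar\varphi}$. These two projections are therefore Murray--von Neumann equivalent, and the trace property $\operatorname{Tr}(U^*U)=\operatorname{Tr}(UU^*)$ gives them equal trace. Next I will argue that $\ker\bar\varphi$ has dimension $0$ relative to $\overline M$. One could try to show that injectivity on a dense $\Gamma$-invariant subspace forces the kernel projection to vanish, but this fails in general. So instead I will define dimension for non-closed modules via their closure and the induced map, and invoke the standard fact from \cite[Theorem 1.12]{luck2002l2} that a weakly injective morphism has $\dim\ker=0$.

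With that in hand, $\dim_\Gamma M=\operatorname{Tr}(P_{\overline M})=\operatorname{Tr}(P_{(\ker\bar\varphi)^\perp})=\operatorname{Tr}(P_{\overline{\operatorname{im}\varphi}})$. Since $P_{\overline{\operatorname{im}\varphi}}\le P_{\overline N}$ and the trace is positive, this is at most $\operatorname{Tr}(P_{\overline N})=\dim_\Gamma N$.

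\emph{Exactness.} For $M_1\xrightarrow{\varphi}M_2\xrightarrow{\psi}M_3$, decompose $\overline{M_2}=\ker\bar\psi\oplus(\ker\bar\psi)^\perp$, so that $\operatorname{Tr}P_{\overline{M_2}}=\operatorname{Tr}P_{\ker\bar\psi}+\operatorname{Tr}P_{(\ker\bar\psi)^\perp}$.
\begin{itemize}
\item The first term equals $\operatorname{Tr}P_{\overline{\operatorname{im}\varphi}}$ by the hypothesis $\overline{\operatorname{im}\varphi}=\ker\psi$, and this equals $\dim_\Gamma M_1$ by the polar-decomposition argument above.
\item For the second term, polar decomposition of $\bar\psi$ identifies $(\ker\bar\psi)^\perp$ with $\overline{\operatorname{im}\bar\psi}$. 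Surjectivity of $\psi$ makes the image dense in $\overline{M_3}$, so the second term is $\dim_\Gamma M_3$.
\end{itemize}

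The main obstacle is the passage between a possibly non-closed module and its closure. In particular, the kernel of the extension $\bar\varphi$ must have trace zero. I will handle this through the weak-isomorphism framework of L\"uck, where the statement is Theorem 1.12, rather than reprove it.
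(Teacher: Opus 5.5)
\textbf{Genuine gap.} The gap is exactly the step you flag and then outsource. You need $\dim_\Gamma\ker\bar\varphi=0$. But $\ker\bar\varphi$ is a closed $\Gamma$-invariant subspace and the trace is faithful, so its dimension vanishes only if $\ker\bar\varphi=\{0\}$, which is precisely what you concede can fail. Theorem 1.12 of \cite{luck2002l2} does not supply this step. It is stated for Hilbert $\mathcal N\Gamma$-modules, i.e.\ \emph{closed} invariant subspaces, with bounded equivariant morphisms. In that setting an injective morphism already has zero kernel, and no extension to a closure ever occurs. The exactness part has the same hole, since you silently replace $\ker\psi$ by the possibly larger $\ker\bar\psi$.

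Re-defining the dimension of a non-closed module ``via the induced map'' is not available either, because the paper fixes $\dim_\Gamma M$ as the trace of the projection onto $\overline M$. With that definition the statement is actually false for non-closed modules, even for bounded maps. Take $\Gamma=\Z$ and identify $\ell^2(\Z)\cong L^2(\T)$, so that $M=\mathbb C\Z$ becomes the Laurent polynomials. Let $\varphi(p)=\mathbf 1_A p$ for an arc $A$ of normalised measure $1/2$. Then $\varphi$ is bounded, equivariant and injective, since a non-zero Laurent polynomial has finitely many zeros. Yet $\dim_\Gamma M=1$, while $\overline{\varphi(M)}=L^2(A)$ has dimension $1/2$; here $\ker\bar\varphi=L^2(\T\setminus A)$. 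Taking $M_1=0$, $M_2=M$, $M_3=\varphi(M)$ and $\psi=\varphi$ breaks the additivity formula in the same way.

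\textbf{Repair.} Read the lemma for closed modules and bounded homomorphisms. This is L\"uck's setting, which is what the paper cites in place of a proof, and it is all that is used later. In \eqref{eq:vonNeumannnull} the modules are $\ker T$, $\overline{\operatorname{im}T}$ and $\C^n\otimes\ell^2(\Gamma)$. In the Cauchy interlacing lemma the domain is the range of the projection $\mathbf 1_I(A)$. In this setting $\bar\varphi=\varphi$ is injective, and your polar-decomposition argument closes with no detour.
\begin{itemize}
\item First claim: the partial isometry $U$ satisfies $U^*U=P_M$ and $UU^*=P_{\overline{\operatorname{im}\varphi}}$. Hence $\dim_\Gamma M=\operatorname{Tr}P_{\overline{\operatorname{im}\varphi}}\le\operatorname{Tr}P_N$ by positivity of the trace.
\item Exactness: $\ker\psi$ is closed, and $M_2=\ker\psi\oplus(M_2\ominus\ker\psi)$. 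The first summand has dimension $\dim_\Gamma M_1$ by the same argument applied to $\varphi$. The partial isometry of $\psi$ carries the second summand onto $\overline{\operatorname{im}\psi}=M_3$.
\end{itemize}
This is the standard proof of the cited result.
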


When \(\varphi\) is a \(\Gamma\)-invariant bounded operator from \(\C^n\otimes \ell^2(\Gamma)\) to \(\C^m\otimes \ell^2(\Gamma)\), or equivalently, a rectangular matrix \(M_{m\times n}(\mathcal N \Gamma)\), Lemma \ref{lm:weakexact} tells us that 
\begin{equation}\label{eq:vonNeumannnull}
    \dim_{\Gamma} \ker T+\operatorname{rank}_\Gamma T=n.
\end{equation}
 In fact, on such operators, the rank function satisfies the following properties akin to the finite dimensional case and can be deduced from Lemma \ref{lm:weakexact}, and see \cite[Lemma 2.27]{arizmendi2024universality} for a direct proof.
\begin{lemma}
\label{lem:rank-identities}
The rank defined above is a Sylvester rank function, i.e., for any element in  \(M_{m\times n}(\mathcal N \Gamma)\), the following holds.
    \begin{enumerate}[(i)]
\item  \(\operatorname{rank}_\Gamma(0)=0\), \(\operatorname{rank}_\Gamma(\mathrm{id})=1\) where \(\mathrm{id}=\rho_\Gamma(\EE)\).

\item  \(\operatorname{rank}_\Gamma(AB)\leq \min (\operatorname{rank}_\Gamma(A),\operatorname{rank}_\Gamma(B))\). 

\item \(\operatorname{rank}_\Gamma \begin{pmatrix}
A & 0 \\
0 & B
\end{pmatrix}=\operatorname{rank}_\Gamma A+ \operatorname{rank}_\Gamma B\).

\item \(\operatorname{rank}_\Gamma \begin{pmatrix}
A & B \\
0 & C
\end{pmatrix}\geq \operatorname{rank}_\Gamma A+ \operatorname{rank}_\Gamma C\).
    \end{enumerate}
\end{lemma}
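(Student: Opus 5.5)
The plan is to deduce all four properties from the rank–nullity identity \eqref{eq:vonNeumannnull} and Lemma \ref{lm:weakexact}, after identifying $\operatorname{rank}_\Gamma T$ with $\dim_\Gamma \overline{\operatorname{im} T}$. Since the projection onto $\operatorname{im}T$ and onto its closure coincide, this identification is immediate from the definition.

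For (i), the zero operator has image $\{0\}$, so its rank is $0$. The identity $\rho_\Gamma(\EE)$ on $\ell^2(\Gamma)$ has image projection equal to the identity, and $\tau_\Gamma(\mathrm{id})=\langle\delta_\EE,\delta_\EE\rangle=1$.

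For (ii), we have $\operatorname{im}(AB)\subseteq \operatorname{im}A$. Inclusion is an injective $\Gamma$-equivariant map, so Lemma \ref{lm:weakexact} gives $\operatorname{rank}_\Gamma(AB)\le\operatorname{rank}_\Gamma A$. For the other bound, $A$ maps $\operatorname{im}B$ onto $\operatorname{im}(AB)$. I would apply the additivity part of Lemma \ref{lm:weakexact} to
\[
\ker A\cap \operatorname{im}B\hookrightarrow \operatorname{im}B \twoheadrightarrow \operatorname{im}(AB).
\]
It is cleaner to phrase this with closures, using rank–nullity for the restriction of $A$ to $\overline{\operatorname{im}B}$. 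This yields $\operatorname{rank}_\Gamma(AB)\le \dim_\Gamma\overline{\operatorname{im}B}=\operatorname{rank}_\Gamma B$.

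For (iii), the image of the block-diagonal operator is $\operatorname{im}A\oplus\operatorname{im}B$. Its projection is the block-diagonal projection, and the amplified trace is additive over diagonal blocks.

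For (iv), I would first use the orthogonal projection $Q$ onto the lower block coordinates. Then $Q\circ\begin{pmatrix}A&B\\0&C\end{pmatrix}$ has image $\operatorname{im}C$, so the image $M$ of the block operator surjects onto $\operatorname{im}C$. The kernel of this surjection restricted to $M$ contains $\operatorname{im}A\oplus 0$, namely the images of vectors $(x,0)$. Additivity in Lemma \ref{lm:weakexact}, together with monotonicity, then gives
\[
\dim_\Gamma \overline{M}\ge \dim_\Gamma\overline{\operatorname{im}A}+\dim_\Gamma\overline{\operatorname{im}C}.
\]

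The main obstacle is the closure bookkeeping, because Lemma \ref{lm:weakexact} requires $\overline{\operatorname{im}\varphi}=\ker\psi$. I would handle this by always working with the closed subspaces $\overline{M}$ and $\ker(Q|_{\overline M})$, which are closed and $\Gamma$-invariant. Dimensions are insensitive to taking closures, since the projections agree.
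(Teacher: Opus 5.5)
Your proposal is correct and follows the route the paper indicates. The paper gives no detailed proof; it only remarks that these properties can be deduced from Lemma \ref{lm:weakexact} and cites \cite[Lemma 2.27]{arizmendi2024universality} for a direct proof. Your argument carries out that deduction, and it handles the closure issues correctly by applying rank--nullity to the bounded restrictions $A|_{\overline{\operatorname{im}B}}$ and $Q|_{\overline{M}}$.
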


Note that from the above properties of Sylvester rank function, one can show directly 
\begin{equation}\label{eq:rankineq}
    \mathrm{rank}_\Gamma (A+B)\leq \mathrm{rank}_\Gamma (A)+\mathrm{rank}_\Gamma (B)
\end{equation}
when \(A\) and \(B\) are of the same size, see, for example, \cite[Lemma 2.18]{arizmendi2024universality}.

\begin{lemma}[Cauchy interlacing]\label{lemma:Cauchy interlacing}
    Let \(A\in M_n(\mathcal N \Gamma)\) be a self-adjoint matrix on \([n]\times [n]\). For \(S\subset [n]\), let \(A_S\) be the restriction of \(A\) on \(S\times S\). Then for any closed interval \(I\subset \R\), 
    \[\mathrm{rank}_\Gamma \mathbf{1}_I(A)\leq  \mathrm{rank}_\Gamma \mathbf{1}_I(A_S)+ n-|S|.\]
\end{lemma}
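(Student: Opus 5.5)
The plan is to reduce the statement to the rank-nullity identity \eqref{eq:vonNeumannnull} together with the Sylvester rank properties of Lemma \ref{lem:rank-identities}. Write $\mathbf{1}_I(A)$ and $\mathbf{1}_I(A_S)$ for the spectral projections. By \eqref{eq:vonNeumannnull}, $\mathrm{rank}_\Gamma \mathbf{1}_I(A)$ equals the von Neumann dimension of the closed $\Gamma$-invariant subspace $M:=\operatorname{im}\mathbf{1}_I(A)\subset \C^n\otimes\ell^2(\Gamma)$. Similarly, $\mathrm{rank}_\Gamma\mathbf{1}_I(A_S)=\dim_\Gamma M_S$, where $M_S=\operatorname{im}\mathbf{1}_I(A_S)\subset \C^S\otimes\ell^2(\Gamma)$.

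\textbf{Step 1: cut down to the coordinates $S$.} Let $P_S$ be the coordinate projection onto $\C^S\otimes\ell^2(\Gamma)$. The kernel of $P_S$ restricted to $M$ lies in $\C^{[n]\setminus S}\otimes\ell^2(\Gamma)$, whose dimension is $n-|S|$. By Lemma \ref{lm:weakexact},
\[
\dim_\Gamma M\le \dim_\Gamma \overline{P_S M}+(n-|S|).
\]
Let $M'$ denote the closed subspace $P_SM\cap\ker P_S|_M^{\perp}$, i.e. vectors of the form $P_Sx$ with $x\in M\ominus(\ker P_S\cap M)$. Then $\dim_\Gamma M'\ge \dim_\Gamma M-(n-|S|)$, and the compression $A_S$ restricted to $M'$ is what we need to control.

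\textbf{Step 2: a min--max argument.} It remains to show $\dim_\Gamma M'\le \dim_\Gamma M_S$ in the appropriate sense. The clean formulation is as follows: write $I=[a,b]$, and take the spectral subspace $N:=\operatorname{im}\mathbf{1}_{\R\setminus I}(A_S)=M_S^\perp$ inside $\C^S\otimes\ell^2(\Gamma)$. The goal is to show that $N\cap P_S(M)$ is small. The difficulty is that $\R\setminus I$ consists of two half-lines. So I would instead do the interlacing for half-lines first, namely $\mathbf{1}_{(-\infty,b]}$ and $\mathbf{1}_{(b,\infty)}$, using the quadratic-form characterisation. For $x\in M_{\le b}:=\operatorname{im}\mathbf{1}_{(-\infty,b]}(A)$ with $P_Sx=x$, we have $\langle A_Sx,x\rangle\le b\|x\|^2$. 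On the other hand, every nonzero $y\in\operatorname{im}\mathbf{1}_{(b,\infty)}(A_S)$ has $\langle A_Sy,y\rangle>b\|y\|^2$. Hence
\[
\bigl(M_{\le b}\cap \C^S\otimes\ell^2(\Gamma)\bigr)\cap \operatorname{im}\mathbf{1}_{(b,\infty)}(A_S)=0 .
\]
By Lemma \ref{lm:weakexact} (a direct sum of two trivially intersecting invariant subspaces injects into the ambient space $\C^S\otimes\ell^2(\Gamma)$, and a weakly exact sum of dimensions holds), this yields the half-line interlacing inequalities. This is the von Neumann version of Courant--Fischer and uses only dimension additivity.

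\textbf{Step 3: combine the two endpoints.} With the half-line inequalities in hand, for the projections $\mathbf{1}_{(-\infty,t]}$ we get the two-sided bound
\[
\mathrm{Tr}\,\mathbf{1}_{(-\infty,t]}(A_S)\le \mathrm{Tr}\,\mathbf{1}_{(-\infty,t]}(A)\le \mathrm{Tr}\,\mathbf{1}_{(-\infty,t]}(A_S)+n-|S|,
\]
using $P_S\mathbf{1}_{\le t}(A)$ for the right inequality and extension by zero for the left one. Write $I=[a,b]$ and $\mathbf{1}_I=\mathbf{1}_{(-\infty,b]}-\mathbf{1}_{(-\infty,a)}$, where the open half-line version follows the same way. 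Subtracting the two bounds above would lose a factor $2(n-|S|)$, so this is not enough on its own.

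\textbf{Main obstacle.} The expected main obstacle is therefore to avoid the doubled error. I would handle it directly, as in Step 2, with the subspace $M\cap(\C^S\otimes\ell^2(\Gamma))$ and its complement. Let $x\in M$ satisfy $x=P_Sx$, and suppose $x\perp M_S$. Then $x$ decomposes into pieces $x_-\in\operatorname{im}\mathbf{1}_{(-\infty,a)}(A_S)$ and $x_+\in\operatorname{im}\mathbf{1}_{(b,\infty)}(A_S)$. The hope is to show that this forces $x=0$ by a convexity argument: consider $\langle (A_S-a)(A_S-b)x,x\rangle$. This quantity is $>0$ on the orthogonal complement of $M_S$. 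It is $\le 0$ for $x\in M$ supported on $S$, because on such $x$
\[
\|(A-c)x\|^2=\|(A_S-c)x\|^2+\|P_{S^c}Ax\|^2,\qquad c=\tfrac{a+b}2,
\]
and $\mathbf{1}_I(A)x=x$ gives $\|(A-c)x\|\le \tfrac{b-a}{2}\|x\|$. Hence $\|(A_S-c)x\|\le\tfrac{b-a}2\|x\|$, which contradicts $x\in \operatorname{im}\mathbf{1}_{\R\setminus I}(A_S)$ unless $x=0$. So $M\cap(\C^S\otimes\ell^2(\Gamma))$ injects into $M_S$ via the orthogonal projection onto $M_S$. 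Combined with Step 1, where $M\cap(\C^S\otimes\ell^2(\Gamma))$ has codimension at most $n-|S|$ in $M$ (it is the kernel of $P_{S^c}|_M$), Lemma \ref{lm:weakexact} gives the claim.
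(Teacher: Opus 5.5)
Your final paragraph is a correct proof and is essentially the paper's argument. A vector $x\in\operatorname{im}\mathbf 1_I(A)$ supported on $S$ satisfies $\|(A_S-c)x\|\le\|(A-c)x\|\le\tfrac{|I|}{2}\|x\|$ (with $c$ the centre of $I$), so it cannot lie in $\operatorname{im}\mathbf 1_{\R\setminus I}(A_S)$ unless $x=0$; and such vectors have codimension at most $n-|S|$ in $\operatorname{im}\mathbf 1_I(A)$, via the kernel of $P_{S^c}|_M$ rather than $P_S$ as in your Step 1. The paper packages this as one injective map $f=(f_1,f_2)\mapsto(\mathbf 1_{I'}(A_S)f_1,f_2)$ with a slightly enlarged interval $I'$ and then lets $\delta\to0$, whereas you use the strict inequality on $\operatorname{im}\mathbf 1_{\R\setminus I}(A_S)$ directly, which is equally valid; your Steps 2--3 are superfluous.
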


\begin{proof} The lemma is a consequence  of  \cite[Lemma 3.2]{e3d42fc0-0605-3104-9334-59c1bd91bfa0}. We offer a proof here for completeness. Without loss of generality, suppose \(I=[-\epsilon,\epsilon]\) and fix \(I'=(-\epsilon-\delta,\epsilon+\delta)\) for \(\delta>0\).

We define a homomorphism from \(\mathrm{im} \mathbf{1}_I(A)\) to \(\mathrm{im} \mathbf{1}_{I'}(A_S)\oplus \C^{n-|S|}\otimes \ell^2(\Gamma)\) by mapping \(f=(f_1,f_2)\in \mathrm{im} \mathbf{1}_I(A)\) to \((\mathbf{1}_I(A_S)f_1,f_2)\), where \(f_1\) is the summand corresponding to \(S\) and \(f_2\) to \([n]/S\) and show that it is injective. If \((f_1,f_2)\) is in the kernel of the map, then we have \(\mathbf{1}_{I'}(A_S)f_1=0\) and \(f_2=0\). Since 
\(f\in \mathrm{im} \mathbf{1}_I(A)\), we have \[\lVert A_S f_1\rVert\leq \epsilon \lVert f\rVert=  \epsilon \lVert f_1\rVert. \]
Since \(\mathbf{1}_{I'}(A_S)f_1=0\), we have 
\[\lVert A_S \mathbf{1}_{(I')^c}(A_S) f_1 \rVert =\lVert A_S f_1\rVert\leq \epsilon \lVert f_1\rVert=\epsilon \lVert  \mathbf{1}_{(I')^c}(A_S) f_1 \rVert.\]
However, we also have 
\[(\epsilon+\delta)\lVert \mathbf{1}_{(I')^c}(A_S) f_1 \rVert \leq \lVert A_S \mathbf{1}_{(I')^c}(A_S) f_1 \rVert,\]
thus \[\mathbf{1}_{(I')^c}(A_S) f_1=0,\]
and hence \(f_1=0\). By Lemma \ref{lm:weakexact}, 
\[\mathrm{rank}_\Gamma \mathbf{1}_I(A)\leq  \mathrm{rank}_\Gamma \mathbf{1}_{I'}(A_S)+ n-|S|.\]
due to the arbitrariness of \(\delta\), the conclusion follows.
\end{proof}

\begin{definition}[The spectral measure for voltage graphs]
\label{def:dos}
Let $G  = (V,E)$ be a finite graph and \(G_\Gamma\) be a derived voltage graph with voltage assignment \((g_e)_e\). Let $$
    \mathcal{H}=\sum_{e\in \vec E}w_eA_e+\mathcal{V}\in M_{V}(\C)$$
be a Schr\"odinger operator on \(G\) which lifts to a periodic Schr\"odinger operator 
$$
\mathcal{H}_\Gamma=\mathcal{V}\otimes \mathrm{id}+\sum_{e\in \vec E}w_eA_e\otimes \rho_\Gamma(g_e)\in M_{V}(\mathcal{N}\Gamma).$$
The \textbf{spectral measure} of \(G_\Gamma\) with respect to 
\(\mathcal H_\Gamma\) is defined as
\[
\mu_{G_\Gamma}(B)
=
\frac{1}{|V|}
\dim_{\Gamma} \mathbf 1_B\left( \mathcal{H}_\Gamma\right).
\]
\end{definition}

From \(\Gamma\)-invariance, one can check that
    \[
\mu_{G_\Gamma}(B)
=
\frac{1}{|V|} \sum_{v\in V}
\langle \delta_v\otimes \delta_{g_v}, \mathbf 1_B\left( \mathcal{H}_\Gamma\right) \delta_v\otimes \delta_{g_v}\rangle
\]
holds for arbitrarily chosen \((g_v)\in \Gamma^V\). We also have the following fact, which, in the language of unimodular graphs, means that 
weighted random unimodular graphs of the same law gives the same spectral measure.
\begin{lemma}\label{lm;isodeterminedos}
  Let \(\Gamma_1\) and \(\Gamma_2\) be two groups and \(G\) and \(H\) be two finite graphs. If there is a bijection \(\iota\) from \(V(G)\) and \(V(H)\), such that 
  for each connected component of \(G_{\Gamma_1}\) containing a lift of \(v\in V(G)\),  one can find an isomorphism from \(G_{\Gamma_1}\) to a component of \(H_{\Gamma_2}\) which maps the lift of \(v\) to a  lift of \(\iota(v)\) and preserves the periodic Schr\"odinger operator, then 
  \[\mu_{G_{\Gamma_1}}=\mu_{H_{\Gamma_2}}.\]
\end{lemma}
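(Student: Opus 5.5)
The plan is to compute both measures as averages of diagonal spectral entries, one vertex of the base graph at a time, and to match the two averages term by term. By the remark after Definition \ref{def:dos}, we may write
\[
\mu_{G_{\Gamma_1}}(B)=\frac{1}{|V(G)|}\sum_{v\in V(G)}\langle \delta_{\tilde v}, \mathbf 1_B(\mathcal H_{\Gamma_1})\delta_{\tilde v}\rangle ,
\]
with one arbitrary lift \(\tilde v=(v,g_v)\) for each \(v\), and similarly for \(H_{\Gamma_2}\). The bijection \(\iota\) forces \(|V(G)|=|V(H)|\), so the normalisations agree. It therefore suffices to show, for each \(v\in V(G)\) and each Borel set \(B\),
\[
\langle \delta_{\tilde v}, \mathbf 1_B(\mathcal H_{\Gamma_1})\delta_{\tilde v}\rangle=\langle \delta_{\widetilde{\iota(v)}}, \mathbf 1_B(\mathcal H_{\Gamma_2})\delta_{\widetilde{\iota(v)}}\rangle .
\]
Here we choose \(\widetilde{\iota(v)}\) to be the image of \(\tilde v\) under the isomorphism given by the hypothesis.

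First we reduce to connected components. Let \(C\) be the component of \(G_{\Gamma_1}\) containing \(\tilde v\). The operator \(\mathcal H_{\Gamma_1}\) is a finite-range operator whose matrix entries vanish between distinct components, so \(\ell^2(V(G_{\Gamma_1}))=\bigoplus_C \ell^2(C)\) is a reducing decomposition. Consequently \(\mathcal H_{\Gamma_1}=\bigoplus_C \mathcal H_C\), with \(\mathcal H_C\) the restriction of \(\mathcal H_{\Gamma_1}\) to \(\ell^2(C)\). Each \(\mathcal H_C\) is bounded and self-adjoint, because the degrees and weights are bounded. Bounded Borel functional calculus respects direct sums, so \(\mathbf 1_B(\mathcal H_{\Gamma_1})=\bigoplus_C \mathbf 1_B(\mathcal H_C)\). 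Hence the diagonal entry at \(\tilde v\) equals \(\langle\delta_{\tilde v},\mathbf 1_B(\mathcal H_C)\delta_{\tilde v}\rangle\). The same decomposition applies to the component \(C'\) of \(H_{\Gamma_2}\).

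Next we transfer the entry through the isomorphism. The hypothesised graph isomorphism \(\phi:C\to C'\) preserves the periodic Schrödinger operator, meaning it carries edge weights and potentials to edge weights and potentials. It therefore induces a unitary \(U:\ell^2(C)\to\ell^2(C')\), \(U\delta_x=\delta_{\phi(x)}\), satisfying \(U\mathcal H_C U^*=\mathcal H_{C'}\). Functional calculus is natural under unitary conjugation, so \(U\mathbf 1_B(\mathcal H_C)U^*=\mathbf 1_B(\mathcal H_{C'})\). One can see this by checking it first for polynomials and then extending through the usual monotone class or weak-limit argument. Evaluating this identity at \(\delta_{\phi(\tilde v)}\), where \(\phi(\tilde v)\) is a lift of \(\iota(v)\), gives the required equality of diagonal entries.

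Finally we sum over \(v\in V(G)\) and use that \(\iota\) is a bijection, which reindexes the sum over \(V(H)\). This yields \(\mu_{G_{\Gamma_1}}(B)=\mu_{H_{\Gamma_2}}(B)\) for every Borel \(B\). The only step needing a little care is the independence of the lift chosen in the defining formula, which is exactly the \(\Gamma\)-invariance remark preceding the lemma. Everything else is routine spectral theory, so I expect no genuine obstacle beyond stating the component decomposition precisely.
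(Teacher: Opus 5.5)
Your proof is correct, and it moves the spectral data across the isomorphism at a different level than the paper does.

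The paper compares only moments. For each \(r\), the quantity \(\langle \delta_{v'},\mathcal H_{\Gamma_1}^r\delta_{v'}\rangle\) is determined by the weights and potentials in the component of \(v'\), so the isomorphism preserving the Schr\"odinger operator gives \(\langle \delta_{v'},\mathcal H_{\Gamma_1}^r\delta_{v'}\rangle=\langle \delta_{\iota(v)'},\mathcal H_{\Gamma_2}^r\delta_{\iota(v)'}\rangle\). Summing over \(v\) shows that \(\mu_{G_{\Gamma_1}}\) and \(\mu_{H_{\Gamma_2}}\) have the same moments. Both measures are compactly supported because the operators are bounded, so they coincide.

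You instead work with operators. The decomposition into connected components reduces \(\mathcal H_{\Gamma_1}\) and \(\mathcal H_{\Gamma_2}\). The isomorphism induces a unitary \(U\) with \(U\mathcal H_CU^*=\mathcal H_{C'}\). Naturality of Borel functional calculus under direct sums and unitary conjugation then matches the diagonal entries of \(\mathbf 1_B\) for every Borel set \(B\) directly.

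Your step of checking polynomials first and then extending by a monotone class argument is the operator-level counterpart of the paper's appeal to moment determinacy, so the two proofs share the same skeleton. The paper's version is more economical: it needs only finite walk sums and a standard fact about compactly supported measures. It never has to justify that the component decomposition is reducing or that functional calculus commutes with infinite direct sums. Your version avoids the moment problem altogether, at the cost of those routine spectral-theoretic checks, which you handle correctly.
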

\begin{proof}
Fix \(r\in \N\). For any \(v\in V(G)\), choose its preimage \(v'\in V(G_{\Gamma_1})\) and preimage \(\iota(v)'\in V(H_{\Gamma_2})\) of \(\iota(v)\in V(H)\) so that the isomorphism in the statement exists. Since \(\langle\delta_{v'}, \mathcal{H}^r_{\Gamma_1} \delta_{v'}\rangle\) and \(\langle\delta_{\iota(v)'}, \mathcal{H}^r_{\Gamma_2} \delta_{\iota(v)'}\rangle\) only depends on the connected components containing \(v'\) and 
\(\iota(v)'\), the isomorphism gives 
\[\langle\delta_{v'}, \mathcal{H}^r_{\Gamma_1} \delta_{v'}\rangle=\langle\delta_{\iota(v)'}, \mathcal{H}^r_{\Gamma_2} \delta_{\iota(v)'}\rangle.\]
By definition of the spectral measure, we have 
\[\int x^r\mathrm{d}\mu_{G_{\Gamma_1}}=\sum_{v\in V_G}\langle\delta_{v'}, \mathcal{H}^r_{\Gamma_1} \delta_{v_1}\rangle \qquad \text{and}\qquad \int x^r\mathrm{d}\mu_{H_{\Gamma_2}}=\sum_{u\in V_G}\langle\delta_{\iota(v)'}, \mathcal{H}^r_{\Gamma_2} \delta_{\iota(v)'}\rangle\]
where we use that \(\iota\) is a bijection. Thus for any \(r\in \N\), 
\[\int x^r\mathrm{d}\mu_{G_{\Gamma_1}}=\int x^r\mathrm{d}\mu_{H_{\Gamma_2}},\]
therefore \(\mu_{G_{\Gamma_1}}=\mu_{H_{\Gamma_2}}\) since bounded measures are characterised by their moments.
\end{proof}

\section{Background on
algebraic matching theory}
\label{sec:comb}
The main combinatorial input that we use to prove Theorem \ref{conj} will be an analysis of matching polynomials and so in this section we present some of their properties. Recall the notation that we use for graphs provided in the introduction.

\begin{definition}
Let $G=(V,E)$ be a finite multi-graph. A \textbf{$k$-matching} in $G$ is a collection of \(k\) independent edges, or equivalently, a $1$-regular subgraph of $G$ with \(k\) edges. The empty subgraph is seen as the unique \(0\)-matching. A matching is called \textbf{maximum} if its edge set has the largest cardinality among all matchings, and is called \textbf{perfect} or a \textbf{\(1\)-factor} if it contains all vertices in \(V\). Denote by $\mathcal{M}_G$ the collection of all matchings of $G$.
\end{definition}

\begin{definition}
    Given a matching \(M\), an \(M\)-\textbf{alternating} walk is a walk that transverses edges that alternate between edges from $M$ and edges not in $M$. Similarly one can define alternating paths and cycles.
\end{definition}

\begin{remark}\label{thm:Berge}
    The word \textbf{maximal} is in the set containment sense in matching theory, that is, removing the matching as a subgraph from $G$ leaves only singletons possibly with self-loops. Therefore, a matching might not be extended to a maximum matching. However, given any matching, there always exists a maximum matching containing all of its vertices, see for example \cite[Corollary 3.1.6]{lovasz2009matching}. 
\end{remark}

The surplus of a set $S$ of vertices in a graph $G$ measures how much its vertex boundary $\partial S$ outnumbers it. That is, $\partial  S$ is the collection of vertices in $G$ not contained in $S$ but are connected to at least one vertex in $S$. We extend this notion to subgraphs of $G$ also. We define the surplus of a graph as follows.

\begin{definition}
    Let \(G=(X\sqcup Y,E)\) be a bipartite graph. The surplus of \(G\) from \(X\) is 
    \[\min_{\varnothing\neq S\subset X} \left( \left|\partial  S\right|-|S| \right).\]
\end{definition}

Bipartite graphs with non-negative surplus will play an important role in this article because they imply the existence of certain matchings. Indeed, we recall the following classic result.

\begin{theorem}[Hall's marriage theorem]
\label{thm:Halls_thm}
  Let \(G=(X\sqcup Y,E)\) be a bipartite graph. There exists a matching
  that contains \(X\) if and only if
  the surplus from \(X\) is non-negative, i.e., 
  \[
    |\partial  S| \geq |S|
    \qquad\text{for every } S\subseteq X.
  \]
  As a result, the surplus from \(X\) is positive if and only if there exists a matching
  that contains \(X\) in \(G\) after deleting any vertex from \(Y\).
\end{theorem}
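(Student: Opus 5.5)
We prove the two directions of the marriage theorem separately, by the standard induction on $|X|$, and then deduce the statement about positive surplus by deleting one vertex of $Y$ at a time. Multiple edges play no role, since only adjacency matters, and a bipartite graph has no loops. So we may assume $G$ is simple.

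\emph{Necessity} is immediate. If $M$ is a matching containing $X$, then for every $S\subseteq X$ the $M$-partners of the vertices of $S$ are $|S|$ distinct vertices of $Y$, and all of them lie in $\partial S$. Hence $|\partial S|\ge |S|$.

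\emph{Sufficiency} goes by induction on $|X|$; the case $|X|\le 1$ is trivial. We split into two cases.
\begin{enumerate}[(i)]
\item \emph{Every nonempty proper $S\subsetneq X$ satisfies $|\partial S|\ge |S|+1$.} Pick any $x\in X$ and a neighbour $y$ of $x$; one exists because $|\partial\{x\}|\ge 1$. In $G\setminus\{x,y\}$ every nonempty $S\subseteq X\setminus\{x\}$ loses at most one boundary vertex, namely $y$. So $|\partial S|\ge |S|$ still holds there. Induction gives a matching of $X\setminus\{x\}$, and we add the edge $xy$.
\item \emph{Some nonempty proper $S_0\subsetneq X$ has $|\partial S_0|=|S_0|$.} The induced graph on $S_0\sqcup\partial S_0$ inherits Hall's condition, because neighbourhoods of subsets of $S_0$ lie in $\partial S_0$. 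Induction matches $S_0$ into $\partial S_0$.

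Now consider the graph $G'$ induced on $(X\setminus S_0)\sqcup(Y\setminus\partial S_0)$. Take $T\subseteq X\setminus S_0$, and write $\partial' T$ for its boundary in $G'$. Then
\[
|\partial' T|+|\partial S_0|\ \ge\ |\partial(T\cup S_0)|\ \ge\ |T|+|S_0|,
\]
so $|\partial' T|\ge |T|$. Induction matches $X\setminus S_0$ inside $G'$. Gluing the two matchings gives a matching containing $X$.
\end{enumerate}
Case (ii) is the only step needing care, namely checking that Hall's condition transfers to the complementary graph $G'$. The displayed count does exactly that.

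\emph{Consequence about positive surplus.} Recall that the surplus is a minimum over nonempty $S\subseteq X$.

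Suppose first that the surplus is at least $1$, and delete any $y\in Y$. For every nonempty $S\subseteq X$ the boundary drops by at most one, so $|\partial S|-1\ge |S|$ still gives $|\partial S|\ge|S|$ after the deletion. Hall's theorem then yields a matching containing $X$.

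Conversely, suppose the surplus is $\le 0$, and take a nonempty $S$ with $|\partial S|\le |S|$. There are two subcases.
\begin{itemize}
\item If $\partial S\ne\varnothing$, delete some $y\in\partial S$. The boundary of $S$ then has size at most $|S|-1$, so by necessity no matching contains $X$ in $G\setminus y$.
\item If $\partial S=\varnothing$, then no matching contains $X$ even before any deletion. So deleting any vertex of $Y$ gives the failure, assuming $Y\neq\varnothing$ so that a deletion is possible. When $Y=\varnothing$, the statement is vacuous and follows by convention.
\end{itemize}
This proves the equivalence.
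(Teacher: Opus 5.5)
Your proof is correct. Necessity is immediate, and the sufficiency argument is the standard Halmos--Vaughan induction, splitting on whether some nonempty proper $S_0\subsetneq X$ is tight. The deletion corollary then follows exactly as you derive it. The paper itself gives no proof of this statement; it quotes the theorem as classical and uses the one-vertex-deletion consequence just as you obtain it.

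The one inaccuracy is your remark on $Y=\varnothing$. If $X\neq\varnothing=Y$, the surplus is $-|X|<0$, while the deletion condition holds vacuously. So the final equivalence is genuinely false there, not true ``by convention''. It should be read with $Y\neq\varnothing$. This always holds where the paper uses the reverse direction: in Lemma~\ref{lemma:Halltype}, $Y$ is the set of components of $D_\theta$, which is nonempty because $\mathrm{mult}_\theta(G)>0$.
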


We now recall the generalised matching polynomial of a graph that relates the matching structure to an ambient Schr\"{o}dinger operator $\mathcal{H}$ on the graph defined as in equation \eqref{eq:Schrodinger_op}. 

\begin{definition}\label{Def:generalisedmatchingpolynomial}
The generalised matching polynomial of $G$ with respect to $\mathcal{H}$, denoted as $m_G$, is defined by
\[
m_G(x)=\sum_{M\in \mathcal{M}_G}(-1)^{|E(M)|}\prod_{e\in \vec{E}(M)}w_e\cdot \prod_{u\in V\setminus V(M)}\left(x-\mathcal{V}_u\right).
\]
If \(G_0\) is a subgraph of \(G\), we abuse notation by denoting the generalised matching polynomial of \(G_0\) with the restriction of \(\mathcal{H}\) as  
\(
m_{G_0}.
\) If the Schr\"{o}dinger operator $\mathcal{H}$ that we are referring to is unambiguous, we will simply write \(m_G\).

\end{definition}
When \(\mathcal{H}=A_G\) is the adjacency matrix, $$m_G(x)=\sum_{k=0}^{\left\lfloor{|V|}/{2}\right\rfloor} (-1)^k\,|\{k\text{-matchings of}\ G\}|\cdot x^{|V|-2k}$$
becomes the classical matching polynomial of the graph $G$. Moreover, \(0\) is a root of the standard matching polynomial if and only if \(G\) has a perfect matching, and this remains true for Schr\"{o}dinger operators whose potential term is zero.

The main tool of analysis of matching polynomials is through a series of identities that they satisfy. We provide a (non-exhaustive) list of these identities that we will make use of through this article. The first and the third identities were established in Heilmann-Lieb \cite[Equation (4.1), Theorem 6.3]{heilmann1972theory}, the second can be found in \cite[Theorem 1.1]{mcsorley2009multivariate}. Recall the conventions for deleting subgraphs and edges as in the introduction. 

\begin{proposition}
        Let \(G\) be a finite multi-graph, then

\begin{enumerate}[(i)]
            \item For any \(v\in V(G)\),
   \begin{equation}\label{eq:vertexresursion}
       m_G(x)=\left(x-\mathcal{V}_v\right)\cdot m_{G\setminus \{v\}}(x)-\sum_{u\sim v, u\neq v} \left(\sum_{e'\in \vec{E};\, o(e)=v,\,t(e)=u}|w_{e}|^2\right)\cdot m_{G\setminus \{v,u\}}(x).
   \end{equation}

   \item 
   For any \(F\subset  E(G)\),
   \begin{equation}\label{eq:multiedgerecursion}
       m_G(x)=\sum_{M \in \mathcal{M}_G,\; E(M) \subset F} (-1)^{|E(M)|}\prod_{e\in \vec{E}(M)}w_e\cdot m_{(G\setminus F)\setminus V(M)}(x).
    \end{equation}

\item
For any \(u\neq v\in V(G)\), 
    \begin{equation}\label{eq:pathidentity}
        m_{G\setminus\{u\}}(x)m_{G\setminus\{v\}}(x)-m_{G}(x)m_{G\setminus\{u,v\}}(x)=\sum_{P\in \mathcal P_{u,v}} \prod_{e\in \vec{E}(P)} w_e \cdot m_{G\setminus P}(x)^2,
    \end{equation}
where \(\mathcal P_{u,v}\) is the collection of paths with endpoints $u$ and $v$; in particular, if $P$ is a path from $u$ to $v$ then both $P$ and its inverse path from $v$ to $u$ reside in $\mathcal{P}_{u,v}$.
\end{enumerate}

Note that \(\prod_{e\in \vec{E}(M)}w_e\) and \(\prod_{e\in \vec{E}(P)} w_e\) are both positive real numbers since both directions of each edge feature in the product.
\end{proposition}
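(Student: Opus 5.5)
The three identities are all proved by splitting the sum over matchings in Definition~\ref{Def:generalisedmatchingpolynomial} according to how a matching meets a chosen vertex, edge set, or pair of vertices. The weight of a matching $M$ is $(-1)^{|E(M)|}\prod_{e\in\vec E(M)}w_e\prod_{u\notin V(M)}(x-\mathcal V_u)$. It factorises over the edges of $M$, each contributing $-w_ew_{\bar e}=-|w_e|^2$, and over the uncovered vertices. Throughout, self-loops never occur in a matching, since a matching is $1$-regular; they only enter through the potential.

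For (i), fix $v$. Matchings not covering $v$ are exactly the matchings of $G\setminus\{v\}$. Their weight in $G$ is their weight in $G\setminus\{v\}$ times the extra factor $(x-\mathcal V_v)$ for the uncovered vertex $v$. A matching covering $v$ contains exactly one non-loop edge $e$ joining $v$ to some $u\neq v$. Removing $e$ gives a bijection, for each fixed $e$, with the matchings of $G\setminus\{v,u\}$, and the weight changes by the factor $-|w_e|^2$. Summing first over the parallel edges between $v$ and $u$, and then over $u\sim v$, gives \eqref{eq:vertexresursion}.

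For (ii), fix $F\subset E$ and write any matching $M$ of $G$ as $M_F\sqcup M'$ with $M_F=M\cap F$. Then $M'$ is an arbitrary matching of $(G\setminus F)\setminus V(M_F)$, and conversely any such pair $(M_F,M')$ recombines to a matching of $G$. The vertices uncovered by $M$ in $G$ are exactly those uncovered by $M'$ in $(G\setminus F)\setminus V(M_F)$, so the weight of $M$ is the weight of $M_F$'s edges times the weight of $M'$ in the smaller graph. Summing over $M'$ for fixed $M_F$ yields the factor $m_{(G\setminus F)\setminus V(M_F)}(x)$, which is \eqref{eq:multiedgerecursion}. (Taking $F$ to be the set of edges at $v$ recovers (i).)

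For (iii), I will follow the Heilmann--Lieb bijective argument. Expand the left-hand side as a sum over ordered pairs of matchings: $(M_1,M_2)$ with $M_1$ a matching of $G\setminus\{u\}$ and $M_2$ a matching of $G\setminus\{v\}$ on one side, and $(N_1,N_2)$ with $N_1$ a matching of $G$ and $N_2$ a matching of $G\setminus\{u,v\}$ on the other. In either case the multiset union of the two matchings is a disjoint union of alternating paths, even alternating cycles, and doubled edges.
\begin{enumerate}[(a)]
\item \emph{Pairs $(N_1,N_2)$.} Since $u,v$ are uncovered by $N_2$, the components through $u$ and through $v$ are alternating paths (possibly trivial). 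In any pair $(N_1,N_2)$, and in any pair $(M_1,M_2)$ in which $u$ and $v$ lie in different components, exchange the $M_1$- and $M_2$-edges (resp.\ $N_1$- and $N_2$-edges) along the component containing $u$. This is an involutive bijection between the two families of pairs.
\item \emph{Weight preservation under the swap.} Each edge keeps its factor $w_ew_{\bar e}$. The set of vertices uncovered by each matching, counted with multiplicity over the two matchings, is unchanged, so the potential factors agree. The total number of edges is unchanged, so the sign $(-1)^{|M_1|+|M_2|}$ agrees. Hence these terms cancel between $m_{G\setminus\{u\}}m_{G\setminus\{v\}}$ and $m_Gm_{G\setminus\{u,v\}}$.
\item \emph{Surviving pairs.} The only terms not cancelled are pairs $(M_1,M_2)$ in which $u$ and $v$ lie in the same component, which is then an alternating path $P$ from $u$ to $v$. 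This has no counterpart on the $(N_1,N_2)$ side: there a component through both $u$ and $v$ would be a path whose end-edges both lie in $N_1$, impossible when $N_2$ covers neither $u$ nor $v$.
\item \emph{Evaluating the surviving terms.} Fix the path $P$, including the choice of parallel edges. The restrictions of $M_1,M_2$ to $G\setminus P$ are an arbitrary pair of matchings of $G\setminus P$, which produces the factor $m_{G\setminus P}(x)^2$. The edges of $P$ contribute $\prod|w_e|^2$ with sign $(-1)^{|E(P)|}$. Since $u,v$ each lie in $P$, the potential factors on $P$ are trivial.
\end{enumerate}

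The main obstacle is the bookkeeping in step (d): checking that the sign $(-1)^{|E(P)|}$ disappears and that the count matches the stated right-hand side. The sign should vanish because the left-hand side carries the opposite sign convention relative to $m_Gm_{G\setminus\{u,v\}}$; I will verify this carefully, together with the parity of $|E(P)|$ forced by the endpoints. The count should match the paper's convention that $P$ and its reverse both appear in $\mathcal P_{u,v}$, each with the oriented product $\prod_{e\in\vec E(P)}w_e$. Rather than reconcile these conventions combinatorially, I would confirm the normalisation on a single edge $u\sim v$, and I would also check the whole identity by induction on $|V|$ using (i) as an independent cross-check.
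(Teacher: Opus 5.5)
\textbf{Gap in part (iii).} Your proofs of (i) and (ii) are correct and self-contained; the paper gives no proof here and only cites Heilmann--Lieb and McSorley. The problem is in (iii), in steps (a) and (c).

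Step (c) claims that no component of $N_1\uplus N_2$ contains both $u$ and $v$. This is false. Since $N_2$ misses $u$ and $v$, such a component is an alternating path whose two end-edges both lie in $N_1$, i.e.\ a path of \emph{odd} length, and these exist. For example, take $G$ a single edge $uv$ with $N_1=\{uv\}$ and $N_2=\varnothing$, or the path $u\,a\,b\,v$ with $N_1=\{ua,bv\}$ and $N_2=\{ab\}$.

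On such pairs your swap does not land in the other family. Exchanging along the component of $u$ produces an $N_2'$ that covers $v$, so $N_2'$ is not a matching of $G\setminus\{v\}$. The weight-preserving involution of (a) therefore only pairs off the $(M_1,M_2)$ and $(N_1,N_2)$ in which $u$ and $v$ lie in different components. As written, your argument recovers only even-length paths. For a single edge with zero potential it gives $0$, while the left-hand side is $x\cdot x-(x^2-|w|^2)=|w|^2$.

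\textbf{How to repair it.} There are two families of surviving terms.
\begin{itemize}
\item For pairs $(M_1,M_2)$, the $u$--$v$ component starts with an $M_2$-edge at $u$ and ends with an $M_1$-edge at $v$. So $|E(P)|$ is even and the sign is $+$.
\item For pairs $(N_1,N_2)$, the path has odd length and the edge sign is $-1$. The minus in front of $m_Gm_{G\setminus\{u,v\}}$ then makes the contribution $+\prod_{e\in P}|w_e|^2\,m_{G\setminus P}(x)^2$.
\end{itemize}
In both cases there are no potential factors on $V(P)$, and the rest is an arbitrary pair of matchings of $G\setminus P$. Only the union of the two families yields the sum over \emph{all} $u$--$v$ paths. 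This also settles the sign bookkeeping you deferred in (d): the parity of $|E(P)|$ is forced by which side the pair comes from, and the net sign is always $+$.

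\textbf{Normalisation.} The involution argument produces one term per unoriented path, with its choice of parallel edges, weighted by $\prod_{e\in\vec E(P)}w_e=\prod_{e\in P}|w_e|^2$. If a path and its reverse were counted as distinct elements of $\mathcal P_{u,v}$, the right-hand side would be doubled, as the single-edge example shows. This factor is harmless for the paper's later uses, which only need the right-hand side to be a positive combination. Your write-up should still state it explicitly rather than leave it to a check.
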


We shall also use the following standard terminology from the generalised Gallai-Edmonds theory for matching polynomials. For a graph \(G\), write \(\operatorname{mult}_\theta(G)\) for the multiplicity of \(\theta\) as a zero of the generalised matching polynomial corresponding to
\(G\). Note that for any \(v\in V(G)\), one always has \[\operatorname{mult}_\theta(G\setminus\{v\})-
\operatorname{mult}_\theta(G)\in \{0,\pm1\},\]
since \(m_{G\setminus\{v\}}\) always interlaces \(m_{G}\) due to Godsil \cite[Corollary 1.3]{godsil1993algebraic}. Thus we have the following classification of vertices in G.

\begin{definition}
 A vertex \(v\in V(G)\) is called \(\theta\)-\textbf{essential (resp., neutral, positive)} if \[
\operatorname{mult}_\theta(G\setminus\{v\})-
\operatorname{mult}_\theta(G)=-1\,(\text{resp.,}\,0,1).
\]
Let \(D_\theta=D_\theta(G)\) denote the set of \(\theta\)-essential vertices of \(G\), and with abuse of notation, the induced subgraph on this vertex set too. A vertex \(u\in \partial  D_\theta(G)\) is called \(\theta\)-\textbf{special}. A connected graph is called \(\theta\)-\textbf{critical} if all of its vertices are \(\theta\)-essential.
\end{definition}

\begin{remark}
    In the case of \(\theta=0\) for the adjacency operator, a \(0\)-critical graph is traditionally called factor-critical. 
    The notion of \(\theta\)-critical graph is the counterpart of \(\theta\)-irreducible graph corresponding to the characteristic polynomial. When the graph is a tree, these two are the same.
\end{remark}

\begin{lemma}[{\cite[Lemma 3.1]{godsil1995algebraic}}]\label{lm:essenexistence}
    If \(m_G(\theta)=0\), then there exists a \(\theta\)-essential vertex in \(G\).
\end{lemma}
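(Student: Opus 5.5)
The plan is to use the derivative identity for the generalised matching polynomial,
\[
m_G'(x)=\sum_{v\in V(G)} m_{G\setminus\{v\}}(x),
\]
and combine it with the fact recalled just above that $\operatorname{mult}_\theta(G\setminus\{v\})-\operatorname{mult}_\theta(G)\in\{0,\pm1\}$ for every $v$.

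\textbf{Step 1: the derivative identity.} I will prove it directly from Definition \ref{Def:generalisedmatchingpolynomial}. Fix a matching $M$. The corresponding term is a constant $(-1)^{|E(M)|}\prod_{e\in\vec E(M)}w_e$ times $\prod_{u\in V\setminus V(M)}(x-\mathcal V_u)$. By the product rule, its derivative is the sum over unmatched vertices $v\in V\setminus V(M)$ of the same constant times $\prod_{u\in V\setminus (V(M)\cup\{v\})}(x-\mathcal V_u)$.

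Now exchange the order of summation and group the terms by the vertex $v$. The matchings of $G$ that avoid $v$ are exactly the matchings of $G\setminus\{v\}$, with the same weights. Loops and multi-edges cause no trouble, since the weight of each matching only involves its own edges. So the terms attached to $v$ sum to $m_{G\setminus\{v\}}(x)$, which gives the identity.

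\textbf{Step 2: counting multiplicities.} Let $k=\operatorname{mult}_\theta(G)\geq 1$. Then $\theta$ is a zero of $m_G'$ of multiplicity exactly $k-1$; in particular $(x-\theta)^k$ does not divide $m_G'$.

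Suppose, for contradiction, that no vertex is $\theta$-essential. By the interlacing fact, every vertex $v$ then satisfies $\operatorname{mult}_\theta(G\setminus\{v\})\geq k$. Hence $(x-\theta)^k$ divides each $m_{G\setminus\{v\}}$, and therefore divides their sum $m_G'$, which is a contradiction. So some vertex must be $\theta$-essential.

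\textbf{Main point to check.} The only step needing care is the derivative identity in the generalised, weighted, multigraph setting. The rest is a two-line divisibility argument. I do not expect a real obstacle there, since the potential enters only through the linear factors $(x-\mathcal V_u)$ attached to unmatched vertices.
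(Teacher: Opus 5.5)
Your proof is correct: the identity $m_G'=\sum_{v\in V(G)} m_{G\setminus\{v\}}$ holds verbatim in the weighted setting, since the potential enters only through the factors $x-\mathcal V_u$. Because $m_G$ is monic of degree $|V|$, differentiation lowers the multiplicity at $\theta$ by exactly one, and combining this with the interlacing fact forces a $\theta$-essential vertex. The paper gives no proof of its own here; it simply cites Godsil \cite[Lemma 3.1]{godsil1995algebraic}, whose argument is this same derivative-and-divisibility one.
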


There is a useful result due to Godsil on \(\theta\)-critical graphs, which is a direct consequence of \eqref{eq:pathidentity}.

\begin{lemma}[{\cite[Lemma 3.8]{godsil1995algebraic},\cite[Lemma 20]{spier2025eigenvalues}}] \label{lemma:pathincritical}
    Let \(K\) be a \(\theta\)-critical graph.
    For any \(u\neq v\in V(K)\), there exists a path \(P\) connecting $u$ and $v$ such that \(m_{K\setminus P}(\theta)\neq 0\).
\end{lemma}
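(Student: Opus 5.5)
We use the path identity \eqref{eq:pathidentity} evaluated at \(x=\theta\):
\[
m_{K\setminus\{u\}}(\theta)\,m_{K\setminus\{v\}}(\theta)-m_{K}(\theta)\,m_{K\setminus\{u,v\}}(\theta)=\sum_{P\in\mathcal P_{u,v}}\prod_{e\in\vec E(P)}w_e\cdot m_{K\setminus P}(\theta)^2 .
\]
The plan is to show that the left-hand side is nonzero. Then the right-hand side is nonzero, so some summand is nonzero. Since every weight \(\prod_{e\in\vec E(P)}w_e\) is a positive real number, that summand forces \(m_{K\setminus P}(\theta)\neq 0\) for some path \(P\) from \(u\) to \(v\).

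\textbf{Second term.} Because \(K\) is \(\theta\)-critical, every vertex is \(\theta\)-essential. By Lemma \ref{lm:essenexistence} in its contrapositive form, if \(m_K(\theta)\neq0\) there would be no essential vertex. Indeed, an essential vertex \(v\) needs \(\operatorname{mult}_\theta(K\setminus\{v\})=\operatorname{mult}_\theta(K)-1\ge 0\), so \(\operatorname{mult}_\theta(K)\ge 1\). Hence \(m_K(\theta)=0\) and the second term vanishes.

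\textbf{First term.} For the remaining product, the key quantitative fact is that a \(\theta\)-critical graph has \(\operatorname{mult}_\theta(K)=1\). Granting this, essentiality of \(u\) and \(v\) gives \(\operatorname{mult}_\theta(K\setminus\{u\})=\operatorname{mult}_\theta(K\setminus\{v\})=0\). Thus \(m_{K\setminus\{u\}}(\theta)\,m_{K\setminus\{v\}}(\theta)\neq 0\), the left-hand side is nonzero, and the argument concludes as above.

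\textbf{Main obstacle: \(\operatorname{mult}_\theta(K)=1\).} We argue via multiplicities of the polynomials as functions of \(x\) near \(\theta\). Suppose \(k:=\operatorname{mult}_\theta(K)\ge 2\). Every vertex deletion then has multiplicity exactly \(k-1\), so \(\operatorname{mult}_\theta\) of the product \(m_{K\setminus\{u\}}m_{K\setminus\{v\}}\) is \(2k-2\).

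On the left side, the term \(m_K m_{K\setminus\{u,v\}}\) has multiplicity at least \(k+(k-2)=2k-2\), using interlacing twice. So the left side is a difference of two functions, each of order at least \(2k-2\).

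On the right side, each summand has the form (positive constant) \(\times\, m_{K\setminus P}(x)^2\), and for real \(x\) near \(\theta\) these are nonnegative real functions. Hence no cancellation occurs, and the order of the sum equals the minimal order of the individual summands, which is even.

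To get a contradiction, I would use Godsil's standard argument: differentiate using \(m_K'(x)=\sum_{w}m_{K\setminus\{w\}}(x)\). Since every \(m_{K\setminus\{w\}}\) has order exactly \(k-1\), the derivative \(m_K'\) would have order \(k-1\) unless the leading coefficients cancel. Real-rootedness makes the leading coefficients of \(m_{K\setminus \{w\}}/m_K\) at \(\theta\) all of the same sign. Indeed, they are the residues of \(m_{K\setminus\{w\}}/m_K\), which are positive by interlacing. So no cancellation occurs and the order of \(m_K'\) is \(k-1\), which is consistent and gives no contradiction by itself.

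The contradiction must therefore come from applying the path identity with \(u=v\)-type reasoning, or from citing Godsil's result that \(\theta\)-critical graphs have simple root \(\theta\) (\cite{godsil1995algebraic}, Theorem 3.x). In writing the proof I would cite that result directly; this is the step I expect to require the most care.
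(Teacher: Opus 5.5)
Your argument is correct and takes the same route as the paper, which treats the lemma as an immediate consequence of \eqref{eq:pathidentity} at $x=\theta$: there $m_K(\theta)=0$ and $m_{K\setminus\{u\}}(\theta)\,m_{K\setminus\{v\}}(\theta)\neq 0$ both follow from $\operatorname{mult}_\theta(K)=1$, which is exactly Gallai's lemma, Theorem~\ref{thm:generalisedGallaiEdmonds}(i), so you can drop the inconclusive derivative exploration and cite that result. (A small slip: $m_K(\theta)=0$ follows from your direct observation that an essential vertex forces $\operatorname{mult}_\theta(K)\ge 1$, not from any contrapositive of Lemma~\ref{lm:essenexistence}.)
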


Now we introduce the generalised Gallai-Edmonds structure theorem. The original theorem which only concerned the structure of maximum matchings can be found in \cite[Section 3.2]{lovasz2009matching}, for example. The extension based on the structure of critical components for the standard matching polynomial is due to Godsil \cite[Theorem 4.2]{godsil1995algebraic} and 
Ku and Chen \cite[Theorem 1.7, Lemma 2.4]{ku2010analogue}. In the weighted setting used here, the result is due to Ku and Wong \cite[Lemma 4.1 and Theorem 4.13]{ku2013gallai}; see also the refinement of Spier \cite{spier2023refined}. The first part of the structure theorem is usually referred to as \textbf{Gallai's lemma}, while the second is referred to as the \textbf{stability lemma}.

\begin{theorem}[The generalised Gallai-Edmonds theorem]\label{thm:generalisedGallaiEdmonds}
Let \(G\) be a finite graph,

\begin{enumerate}[(i)]
    \item If \(G\) is connected and \(\theta\)-critical, then
    \(
    \operatorname{mult}_\theta(G)=1.
    \) 
    
    \item If \(v\) is \(\theta\)-special, then for any \(u\neq v\), \(u\) is \(\theta\)-essential (resp. neutral, positive) in \(G\) if and only if \(u\) is \(\theta\)-essential (resp. neutral, positive) in \(G\setminus\{v\}\).
\end{enumerate}
\end{theorem}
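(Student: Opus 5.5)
The plan is to derive both parts from the three recursions above (the vertex recursion, the multi-edge recursion and the path identity), together with the interlacing fact $\operatorname{mult}_\theta(G\setminus\{v\})-\operatorname{mult}_\theta(G)\in\{0,\pm1\}$. The key structural tool I would use is Godsil's path tree. For a vertex $v$, the path tree $T_v(G)$ is rooted at $v$, its vertices are the paths of $G$ starting at $v$, and it carries the lifted weights and potentials. Two facts about it are needed:
\begin{itemize}
\item[(a)] $m_{G\setminus\{v\}}(x)/m_G(x)=m_{T_v\setminus\{v\}}(x)/m_{T_v}(x)$, which is the $(v,v)$ entry of the resolvent $(x-\mathcal H_{T_v})^{-1}$;
\item[(b)] $m_G$ divides $m_{T_v}$.
\end{itemize}
Both follow by induction on $|V|$ from the vertex recursion \eqref{eq:vertexresursion}. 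Since $T_v$ is a tree, its matching polynomial is the characteristic polynomial of a self-adjoint matrix. Consequently $m_{G\setminus\{v\}}/m_G$ has only simple poles with nonnegative residues, which is exactly the interlacing statement.

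For part (i), let $G$ be connected and $\theta$-critical, and put $k=\operatorname{mult}_\theta(G)\ge1$. Summing the vertex recursion over all vertices gives the derivative identity $m_G'=\sum_v m_{G\setminus\{v\}}$. By criticality every summand has multiplicity exactly $k-1$. By (a), each ratio $m_{G\setminus\{v\}}/m_G$ has a simple pole at $\theta$ with residue $a_v>0$, and these residues satisfy $\sum_v a_v=k$.

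To force $k=1$, I would use the path identity \eqref{eq:pathidentity} for adjacent vertices $u\neq v$. Dividing it by $m_G^2$ expresses
\[
\frac{m_{G\setminus\{u\}}}{m_G}\cdot\frac{m_{G\setminus\{v\}}}{m_G}-\frac{m_{G\setminus\{u,v\}}}{m_G}
\]
as a nonnegative combination of squares $\bigl(m_{G\setminus P}/m_G\bigr)^2$. On the left, the product of the two ratios has a pole of order exactly $2$ at $\theta$ with coefficient $a_ua_v>0$. The term $m_{G\setminus\{u,v\}}/m_G$ has a pole of order at most $2$, because interlacing gives $\operatorname{mult}_\theta(G\setminus\{u,v\})\ge k-2$.

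Now suppose $k\ge2$. I would compare the $(x-\theta)^{-2}$ coefficients on the two sides, and then propagate this comparison along a spanning tree of $G$ using connectivity. The aim is a contradiction with the residue identity $\sum_v a_v=k$. Concretely, in the resolvent picture the conditions force all the vectors $\mathbf 1_{\{\theta\}}(\mathcal H_{T_v})\delta_v$ to be proportional across $v$, so the $\theta$-eigenspace seen from $G$ is one-dimensional. \textbf{This is the step I expect to be the main obstacle.} The cleanest route is probably to pass entirely to path trees, where "critical" coincides with "irreducible", and invoke the Parter--Wiener type fact that an irreducible eigenvalue of a tree is simple.

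For part (ii), let $v$ be $\theta$-special, so $v$ is adjacent to some essential vertex $w$. I would prove it in two steps.
\begin{itemize}
\item First, $v$ is $\theta$-positive. Apply the path identity to the pair $(v,w)$: the right-hand side contains the term of the single-edge path $vw$, and there is no cancellation because all terms are positive squares. Comparing orders at $\theta$ then rules out $v$ being essential or neutral.
\item Second, fix $u\neq v$ and apply the path identity to the pair $(u,v)$ in $G$. This relates $\operatorname{mult}_\theta(G\setminus\{u\})$, $\operatorname{mult}_\theta(G\setminus\{v\})=\operatorname{mult}_\theta(G)+1$ and $\operatorname{mult}_\theta(G\setminus\{u,v\})$. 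A short case analysis on the type of $u$, again using that the right-hand side is a positive sum of squares so its order is twice the minimum of $\operatorname{mult}_\theta(G\setminus P)$, shows that
\[
\operatorname{mult}_\theta(G\setminus\{u,v\})-\operatorname{mult}_\theta(G\setminus\{v\})=\operatorname{mult}_\theta(G\setminus\{u\})-\operatorname{mult}_\theta(G).
\]
This identity says precisely that the type of $u$ is the same in $G$ and in $G\setminus\{v\}$.
\end{itemize}
The delicate case is when $u$ is positive. There the lower bound on the right-hand side must be supplied by exhibiting a $u$--$v$ path $P$ through essential vertices with small $\operatorname{mult}_\theta(G\setminus P)$. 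I would build this path using part (i) together with Lemma~\ref{lemma:pathincritical} applied inside the critical component containing $w$.
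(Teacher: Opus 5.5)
The paper does not prove this theorem; it quotes both parts from Godsil, Ku--Chen and Ku--Wong, so your argument has to stand on its own. Several pieces are sound: the path-tree facts (a) and (b), the residues $a_v>0$ with $\sum_v a_v=k$, and ``special $\Rightarrow$ positive'' via orders and parity in the $(v,w)$ path identity. However, the two steps that carry the content of the theorem are not proved.

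\emph{Part (i).} Your $(x-\theta)^{-2}$ comparison yields only the following. For adjacent $u,v$, the single-edge term $|w_{uv}|^2m_{G\setminus\{u,v\}}^2$ on the right forces $\operatorname{mult}_\theta(G\setminus\{u,v\})\ge k-1$. Equivalently, the $(x-\theta)^{-2}$-coefficient $b_{uv}$ of $m_{G\setminus\{u,v\}}/m_G$ vanishes. On the other hand, differentiating $m_{G\setminus\{u\}}$ gives $\sum_{v\neq u}b_{uv}=(k-1)a_u$. So $k\ge 2$ exactly when $G\setminus\{u\}$ has a $\theta$-essential vertex, necessarily non-adjacent to $u$. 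Ruling this out is the whole content of (i), and nothing you describe propagates $b_{uv}=0$ from edges to non-adjacent pairs.

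For a genuine resolvent, that propagation would be Cauchy--Schwarz for the columns of a single rank-$k$ spectral projection. Your vectors $\mathbf 1_{\{\theta\}}(\mathcal H_{T_v})\delta_v$, however, live in $\ell^2$ of different path trees, so ``proportional across $v$'' has no meaning.

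The Parter--Wiener fallback fails as well, because $\theta$-criticality of $G$ does not pass to $T_v$. For $K_3$ at $\theta=0$, $T_v$ is the path on five vertices, and its second and fourth vertices are Parter vertices. Moreover $\operatorname{mult}_\theta(T_v)$ can exceed $\operatorname{mult}_\theta(G)$ (take $\theta=1$ for $K_3$). So applying Parter--Wiener to $T_v$ tells you nothing about $k$ beyond (a).

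\emph{Part (ii).} Put $k=\operatorname{mult}_\theta(G)$, so that $\operatorname{mult}_\theta(G\setminus\{v\})=k+1$, and take $u,v$ in the same component. Your case analysis settles only essential $u$, and there interlacing alone forces $\operatorname{mult}_\theta(G\setminus\{u,v\})=k$.
\begin{itemize}
\item For neutral $u$, interlacing leaves the values $\{k,k+1\}$.
\item For positive $u$, it leaves $\{k,k+1,k+2\}$, and parity removes $k+1$.
\end{itemize}
In both cases the wrong value $k$, under which $u$ becomes essential in $G\setminus\{v\}$, gives the left side exact order $2k$. This is matched by any $u$--$v$ path with $\operatorname{mult}_\theta(G\setminus P)=k$, and orders, parity and signs are all consistent with it. So the neutral case is as delicate as the positive one, and the ``short case analysis'' does not establish your displayed identity.

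Your remedy also points the wrong way. In both cases the correct conclusion holds if and only if \emph{every} $u$--$v$ path satisfies $\operatorname{mult}_\theta(G\setminus P)\ge k+1$. The wrong value occurs if and only if \emph{some} $u$--$v$ path has $\operatorname{mult}_\theta(G\setminus P)=k$, and no path can have less. Exhibiting a path with small $\operatorname{mult}_\theta(G\setminus P)$, for instance via Lemma~\ref{lemma:pathincritical}, can therefore only refute the stability lemma, never prove it. What is needed is a uniform lower bound over all $u$--$v$ paths. That is where the essential neighbour $w$ of $v$ must be used, and the proposal has no argument for it.
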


We also use the following property about $\theta$-special vertices due to Godsil \cite[Corollary 4.3]{godsil1995algebraic} and Ku and Wong \cite[Lemma 4.1]{ku2013gallai}.

\begin{proposition}
    If \(v\) is \(\theta\)-special, then it is \(\theta\)-positive. 
\end{proposition}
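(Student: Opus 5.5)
The plan is to argue by contradiction through the vertex recursion \eqref{eq:vertexresursion} at \(v\), using the stability lemma together with a sign property of the ratios \(m_{H\setminus\{w\}}/m_H\). Let \(v\) be \(\theta\)-special and set \(k=\operatorname{mult}_\theta(G)\). Since \(v\in\partial D_\theta\), it has a \(\theta\)-essential neighbour \(u\), and \(v\notin D_\theta\). So \(v\) is either neutral or positive, and it suffices to rule out that \(v\) is neutral. Suppose it is, so \(\operatorname{mult}_\theta(G\setminus\{v\})=k\). By Theorem \ref{thm:generalisedGallaiEdmonds}(ii), applied to the special vertex \(v\), the vertex \(u\) is still \(\theta\)-essential in \(H:=G\setminus\{v\}\). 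Hence \(\operatorname{mult}_\theta(H\setminus\{u\})=k-1\).

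Next I would divide \eqref{eq:vertexresursion} at \(v\) by \(m_H\), which gives
\[
\frac{m_G(x)}{m_{H}(x)} = (x-\mathcal V_v) - \sum_{w\sim v,\, w\neq v} c_w\,\frac{m_{H\setminus\{w\}}(x)}{m_H(x)},\qquad c_w=\sum_{e:\,o(e)=v,\,t(e)=w}|w_e|^2>0 .
\]
Under the neutrality assumption the left-hand side has no pole at \(\theta\), because \(\operatorname{mult}_\theta(G)=\operatorname{mult}_\theta(H)\). On the right, the \(w=u\) term has a pole at \(\theta\), since \(\operatorname{mult}_\theta(H\setminus\{u\})=k-1<k\). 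Every other term either has a pole or is regular at \(\theta\), by interlacing. It therefore suffices to show that no cancellation of poles can happen, and this is the main point.

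For the sign property I would prove, for any finite graph \(H\) and vertex \(w\), the Christoffel--Darboux type identity
\[
m_{H\setminus\{w\}}\,m_H' - m_H\,m_{H\setminus\{w\}}' = \sum_{P}\prod_{e\in\vec E(P)}w_e\; m_{H\setminus P}^2 ,
\]
where \(P\) ranges over paths starting at \(w\). I would obtain it as the \(u=v\) analogue of \eqref{eq:pathidentity}: write \(m_H'=\sum_{z}m_{H\setminus\{z\}}\) and apply \eqref{eq:pathidentity} to each pair \(w\neq z\); this is Godsil's identity. It shows that \(f=m_{H\setminus\{w\}}/m_H\) satisfies \(f'=-(\text{nonnegative})/m_H^2\le 0\) on \(\mathbb R\) away from zeros of \(m_H\). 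A real rational function that is nonincreasing near a pole \(\theta\) must have a simple pole there with positive residue: near \(\theta\) we have \(f\sim r(x-\theta)^{-n}\), and \(f'\le 0\) on both sides forces \(n=1\) and \(r>0\).

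Consequently each \(c_w\,m_{H\setminus\{w\}}/m_H\) contributes at \(\theta\) a residue that is \(\ge 0\), and the \(w=u\) term contributes a residue that is strictly positive. The sum therefore has a genuine simple pole at \(\theta\), while the left-hand side is regular there. This contradiction shows \(v\) is not neutral, so \(v\) is \(\theta\)-positive. The main obstacle is the derivative identity and the resulting sign of the residues, which is what excludes cancellation. A direct comparison of orders of vanishing in \eqref{eq:pathidentity} alone does not suffice, because in the case \(\operatorname{mult}_\theta(G\setminus\{u,v\})=k-1\) the two terms on its left-hand side have the same order and could cancel.
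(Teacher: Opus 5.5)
The paper gives no proof of this proposition; it quotes it from Godsil and from Ku and Wong. Your argument is correct. Granting the stability lemma (Theorem~\ref{thm:generalisedGallaiEdmonds}(ii)), the $u$-term in the divided vertex recursion has a simple pole at $\theta$. Godsil's derivative identity makes each $m_{H\setminus\{w\}}/m_H$ nonincreasing, so every pole has positive residue and nothing can cancel. Hence the right-hand side has a genuine pole while $m_G/m_H$ is regular.

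There is one small inaccuracy. Monotonicity alone forces only a pole of odd order with positive leading coefficient (think of $(x-\theta)^{-3}$), not a simple pole. The bound on the order comes either from interlacing, which you already invoke, or from the sharper inequality $f'\le -f^2$ given by the trivial path $P=(w)$. Only for a simple pole does the leading coefficient coincide with the residue.

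Your closing remark, however, is wrong, and the route it dismisses is actually the shorter one. The right-hand side of \eqref{eq:pathidentity} for the pair $u,v$ is $\sum_P c_P\, m_{G\setminus P}^2$ with $c_P>0$ and real polynomials $m_{G\setminus P}$. Its lowest-order terms therefore cannot cancel, and its order at $\theta$ is exactly $2\min_P \operatorname{mult}_\theta(G\setminus P)$. This is even, and it is at most $2\operatorname{mult}_\theta(G\setminus\{u,v\})$ because of the one-edge path $P=uv$.

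Now suppose $v$ is neutral. Then $m_{G\setminus\{u\}}m_{G\setminus\{v\}}$ vanishes to order exactly $2k-1$, and interlacing gives $\operatorname{mult}_\theta(G\setminus\{u,v\})\ge k-1$.
\begin{itemize}
\item If this multiplicity equals $k-1$, the left-hand side vanishes to order at least $2k-1$, since cancellation between its two terms can only raise the order. The right-hand side vanishes to order at most $2k-2$.
\item If it is at least $k$, the left-hand side has odd order $2k-1$ while the right-hand side has even order.
\end{itemize}
Either way you get a contradiction, using neither the stability lemma nor the derivative identity.

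Your residue argument is this same sum-of-squares positivity, transported through the resolvent-like ratio $m_{H\setminus\{w\}}/m_H$. It gives a pleasant Herglotz-type picture. But it costs two extra inputs, and the stability lemma is a considerably heavier structural result than the statement being proved.
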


Theorem \ref{thm:generalisedGallaiEdmonds} immediately implies the following result about the multiplicity of $\theta$ as a matching polynomial root.

\begin{proposition}
\label{prop:mult-roots}
    Let \(G\) be a finite graph. For any \(\theta\in \mathbb R\), each component of \( D_{\theta}\) is \(\theta\)-critical and \[\mathrm{mult}_{\theta}(G)=\operatorname{cc} D_{\theta}-\left|\partial   D_{\theta}\right|.\]

\end{proposition}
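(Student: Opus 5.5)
We derive Proposition \ref{prop:mult-roots} from Theorem \ref{thm:generalisedGallaiEdmonds} by deleting the special vertices one at a time and then reading off the multiplicity componentwise.

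\textbf{Step 1: delete the special vertices.} Write \(A_\theta=\partial D_\theta\) for the set of \(\theta\)-special vertices, \(a=|A_\theta|\), and let \(v_1,\dots,v_a\) be any enumeration of \(A_\theta\). We delete these vertices in order. By the stability lemma (Theorem \ref{thm:generalisedGallaiEdmonds}(ii)), deleting a special vertex \(v_1\) preserves the type (essential, neutral, positive) of every other vertex. In particular, in \(G\setminus\{v_1\}\) we still have \(D_\theta(G\setminus\{v_1\})=D_\theta(G)\). Every vertex of \(\partial D_\theta(G)\setminus\{v_1\}\) is still adjacent to \(D_\theta\), so it remains special in \(G\setminus\{v_1\}\). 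This lets us iterate.

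By the Proposition preceding Proposition \ref{prop:mult-roots}, each \(v_i\) is \(\theta\)-positive at the moment of its deletion. Since positivity is stable under the earlier deletions, each deletion raises the multiplicity by exactly one. Hence
\[
\mathrm{mult}_\theta(G\setminus A_\theta)=\mathrm{mult}_\theta(G)+a,
\]
and in \(G':=G\setminus A_\theta\) the essential set is still \(D_\theta\). The other vertices keep their types in \(G'\), so none of them is essential there.

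\textbf{Step 2: structure of \(G'\).} In \(G'\), the set \(D_\theta\) has no neighbours outside itself, so each component of \(D_\theta\) is a connected component of \(G'\). The matching polynomial is multiplicative over connected components. Consequently, a vertex is essential in \(G'\) if and only if it is essential in its own component. It follows that each component \(K\) of \(D_\theta\) is a connected graph all of whose vertices are essential, i.e.\ \(K\) is \(\theta\)-critical. This is the first claim.

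By Gallai's lemma (Theorem \ref{thm:generalisedGallaiEdmonds}(i)), each such \(K\) satisfies \(\mathrm{mult}_\theta(K)=1\).

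\textbf{Step 3: the remaining components.} Let \(R\) be the union of the remaining components of \(G'\), i.e.\ the components made of neutral and positive vertices. If \(m_R(\theta)=0\), then some component \(C\) of \(R\) has \(m_C(\theta)=0\). Lemma \ref{lm:essenexistence} then gives a \(\theta\)-essential vertex in \(C\). By multiplicativity that vertex would be essential in \(G'\), contradicting Step 1. Hence \(\mathrm{mult}_\theta(R)=0\).

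\textbf{Step 4: conclusion.} Combining Steps 2 and 3 with multiplicativity,
\[
\mathrm{mult}_\theta(G)+|\partial D_\theta|=\mathrm{mult}_\theta(G')=\operatorname{cc}D_\theta,
\]
which is the stated formula.

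The delicate point is the bookkeeping in Step 1. When applying Theorem \ref{thm:generalisedGallaiEdmonds}(ii) repeatedly, we must check that each \(v_i\) is still special in the already-reduced graph; this was ensured above by the preservation of \(D_\theta\) and the fact that \(v_i\) is still adjacent to it. We must also use that "positive" refers to the reduced graph, so that each deletion indeed adds exactly one to the multiplicity.
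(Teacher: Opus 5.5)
Your proposal is correct and follows the paper's intended route. The paper gives no separate argument; it only says the proposition follows immediately from Gallai's lemma, the stability lemma (Theorem \ref{thm:generalisedGallaiEdmonds}) and the fact that special vertices are positive. Your iterated deletion of the special vertices, followed by the componentwise count via Gallai's lemma and Lemma \ref{lm:essenexistence}, is the standard way of filling that in, and you correctly check that each remaining boundary vertex is still special after the earlier deletions.
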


We note the following result about $\theta$-critical paths which slightly generalises \cite[Theorem 45]{spier2025eigenvalues} with almost the same proof. This will be used later in Section \ref{sec:monotone}.

\begin{lemma}
\label{lem: path lemma}
    If there is a path $P$ in \(G\) such that \(m_{G\setminus P}(\theta)=0,\)
    then either it contains a vertex \(v\in V(P)\) such that \(m_{G\setminus\{v\}}(\theta)=0,\) or there exists a cycle, $\gamma$ in $G$, such that 
    \[m_{G\setminus\gamma}(\theta)\neq 0.\]
\end{lemma}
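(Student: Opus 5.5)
We argue by contradiction. Suppose $P$ is a path with $m_{G\setminus P}(\theta)=0$ such that $m_{G\setminus\{v\}}(\theta)\neq 0$ for every $v\in V(P)$ and $m_{G\setminus\gamma}(\theta)=0$ for every cycle $\gamma$ of $G$. Among such paths we take one, still called $P$, of minimal length. Write $P=v_0v_1\cdots v_k$ with $k\ge 1$; the case $k=0$ is excluded directly by the hypothesis on single vertices.

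The first step is to show that both endpoints are $\theta$-essential in suitable subgraphs. By minimality the shorter path $P'=v_0\cdots v_{k-1}$ satisfies $m_{G\setminus P'}(\theta)\neq0$. Since $G\setminus P=(G\setminus P')\setminus\{v_k\}$, this says $\operatorname{mult}_\theta$ increases by one upon removing $v_k$ from $G\setminus P'$, i.e.\ $v_k$ is $\theta$-positive in $G\setminus P'$. Symmetrically, $v_0$ is $\theta$-positive in $G\setminus(v_1\cdots v_k)$.

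The second step applies the path identity \eqref{eq:pathidentity} in $G$ with $u=v_0$ and $v=v_k$. At $x=\theta$ the left side equals $m_{G\setminus\{v_0\}}(\theta)m_{G\setminus\{v_k\}}(\theta)-m_G(\theta)m_{G\setminus\{v_0,v_k\}}(\theta)$. The first product is nonzero by assumption. The right side is a sum of nonnegative terms $\prod_e w_e\, m_{G\setminus Q}(\theta)^2$ over paths $Q$ from $v_0$ to $v_k$, and it contains the vanishing term for $Q=P$. To extract information we view both sides as polynomials and compare orders of vanishing at $\theta$; the natural choice is $G\setminus\{v_1,\dots,v_{k-1}\}$, where interlacing controls the orders. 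Alternatively, we close $P$ with an edge $v_kv_0$ when one exists: the resulting cycle $\gamma$ satisfies $G\setminus\gamma=G\setminus P$, so $m_{G\setminus\gamma}(\theta)=0$ is consistent, and the aim is instead to find some cycle with nonvanishing matching polynomial.

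The third step follows Spier's argument in \cite[Theorem 45]{spier2025eigenvalues}. We pass to a $\theta$-critical component $K$ of $D_\theta(G\setminus P')$ containing a neighbour of $v_k$; it exists because $v_k$ is positive, so by the Gallai--Edmonds structure it is special, hence adjacent to $D_\theta$. We do the same on the other end. Lemma \ref{lemma:pathincritical} then supplies a path $R$ inside the critical piece with $m_{K\setminus R}(\theta)\neq0$. Concatenating $P$ with $R$ through these neighbours, and reconnecting to $v_0$ (via a critical component adjacent to $v_0$, or directly), produces a cycle $\gamma$. Using the stability lemma and Proposition \ref{prop:mult-roots} to track how $\operatorname{mult}_\theta$ drops as $P$ and $R$ are removed, we aim to show $m_{G\setminus\gamma}(\theta)\neq0$, contradicting our assumption.

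The main obstacle is this multiplicity bookkeeping: ensuring that each deleted vertex lowers the multiplicity by exactly one, so that the count reaches zero. This requires that the chosen critical components be vertex-disjoint from $P$ and from each other. We would first attempt to arrange this using minimality of $P$ together with the Gallai--Edmonds decomposition of $G\setminus P'$.
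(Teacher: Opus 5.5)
Your Step~1 (a shortest offending path, so that deleting either proper end-subpath gives a matching polynomial that is nonzero at $\theta$) is the same reduction the paper makes. After that, however, the argument never produces the cycle, and Step~3 is built on an empty object. Since $m_{G\setminus P'}(\theta)\neq0$, we have $\operatorname{mult}_\theta(G\setminus P')=0$. So no vertex of $G\setminus P'$ can be $\theta$-essential, and $D_\theta(G\setminus P')=\varnothing$: there is no critical component adjacent to $v_k$ to pass to. Your justification (``$v_k$ is positive, hence special'') also reverses the implication available in the paper, which is special $\Rightarrow$ positive.

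Moving to $D_\theta(G\setminus P)$ would give genuine critical components, but the neighbours of $v_0$ and $v_k$ you need may lie in different components. Then Lemma~\ref{lemma:pathincritical} inside a single $K$ does not connect them, and the multiplicity bookkeeping you flag as the main obstacle remains open. Step~2 is stuck for a related reason: applying \eqref{eq:pathidentity} in $G$ with endpoints $v_0,v_k$ leaves the term $m_G(\theta)\,m_{G\setminus\{v_0,v_k\}}(\theta)$ uncontrolled.

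The missing idea is to apply the path identity in $G\setminus P$, where its second term vanishes because $m_{G\setminus P}(\theta)=0$.
First, the graph $G\setminus\{v_1,\dots,v_k\}$ has matching polynomial nonzero at $\theta$ by minimality. Applying \eqref{eq:vertexresursion} at $v_0$ in this graph, together with $m_{G\setminus P}(\theta)=0$, gives a neighbour $a\notin V(P)$ of $v_0$ with $m_{(G\setminus P)\setminus\{a\}}(\theta)\neq0$. Symmetrically, there is $b\notin V(P)$ adjacent to $v_k$ with $m_{(G\setminus P)\setminus\{b\}}(\theta)\neq0$.
If $a=b$, the cycle $a v_0\cdots v_k a$ already works.
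Otherwise, apply \eqref{eq:pathidentity} in $G\setminus P$ with $u=a$, $v=b$. At $x=\theta$ its left side is $m_{(G\setminus P)\setminus\{a\}}(\theta)\,m_{(G\setminus P)\setminus\{b\}}(\theta)\neq0$, so some path $Q$ from $a$ to $b$ in $G\setminus P$ has $m_{(G\setminus P)\setminus Q}(\theta)\neq0$.
Let $\gamma$ be the cycle made of $Q$, the edges $av_0$ and $v_kb$, and $P$. It satisfies $G\setminus\gamma=(G\setminus P)\setminus Q$, which gives the contradiction directly, with no Gallai--Edmonds bookkeeping.
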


\begin{proof}
Suppose that $\tilde{P}=\{v_1,\ldots,v_k\}$ is a shortest length sub-path of $P$ for which
$m_{G\setminus\tilde{P}}(\theta)=0$.
If $k=1$, then we are done so assume that $k\geq 2$. Since $\tilde{P}$ is the shortest sub-path, \(m_{G\setminus\{v_2,\cdots,v_k\}}(\theta)\neq 0\). Therefore, applying \eqref{eq:vertexresursion} to the graph \(G \setminus\{v_2,\cdots,v_k\}\) at vertex \(v_1\), there exists a neighbour \(a\) of \(v_1\), \(a\neq v_1,\cdots, v_k\) such that \[m_{G\setminus\{a, v_1,\cdots,v_k\}}(\theta)\neq 0. \] Similarly, there exists a neighbour \(b\) of \(v_k\), \(b\neq v_1,\cdots, v_k\) such that \[m_{G\setminus\{b, v_1,\cdots,v_k\}}(\theta)\neq 0.\] 
Now, applying \eqref{eq:pathidentity} to the graph \(G_{\mathrm{int}}:=G\setminus\{v_1,\cdots,v_k\}\), we have 
\[0\neq m_{G_\mathrm{int}\setminus\{a\}}(\theta)m_{G_\mathrm{int}\setminus\{b\}}(\theta)=\sum_{P' \in \mathcal P_{a,b}} \prod_{e\in \vec E} w_e \cdot m_{G_\mathrm{int}\setminus P'}(\theta).\]
As a result, there exists a path \(P'\subseteq G_\mathrm{int}\) such that \(m_{G_\mathrm{int}\setminus P'}(\theta)\neq 0\). Now, let \(\gamma\) be the cycle in \(G\) formed by \(P_0\), \(\tilde{P}\) and two edges connecting \(a,v_1\) and \(b,v_2\). Then 
\[G_\mathrm{int}\setminus P'=G\setminus \gamma\] 
and so \(m_{G\setminus\gamma}(\theta)\neq 0\). 
\end{proof}

An immediate corollary is the following. 
\begin{corollary}[{\cite[Theorem 45]{spier2025eigenvalues}}] \label{cor:cycleincritical}
    Let \(K\) be a \(\theta\)-critical graph that is not a tree, then there exists a cycle \(\gamma\) in $K$ such that \(m_{K\setminus\gamma}(\theta)\neq 0\).
\end{corollary}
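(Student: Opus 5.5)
The corollary follows from Lemma \ref{lem: path lemma} by taking a single well-chosen path and ruling out its first alternative. Since $K$ is $\theta$-critical, it is connected and $\operatorname{mult}_\theta(K)=1$ by Theorem \ref{thm:generalisedGallaiEdmonds}(i). Every vertex is $\theta$-essential, so $\operatorname{mult}_\theta(K\setminus\{v\})=0$, that is, $m_{K\setminus\{v\}}(\theta)\neq 0$ for every $v\in V(K)$. Consequently, whenever Lemma \ref{lem: path lemma} is applied inside $K$ (with $G=K$), the first alternative can never hold. It therefore suffices to produce a path $P$ in $K$ with $m_{K\setminus P}(\theta)=0$; the lemma then yields a cycle $\gamma$ with $m_{K\setminus\gamma}(\theta)\neq0$.

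\textbf{Step 1: reduction to a loop at a vertex.} Because $K$ is not a tree, it contains a cycle, which may be a self-loop or a pair of parallel edges. First suppose some vertex $v$ carries a self-loop. The loop is itself a cycle $\gamma$, and $K\setminus\gamma=K\setminus\{v\}$. Hence $m_{K\setminus\gamma}(\theta)\neq0$ by essentiality, and this case is finished immediately. Next suppose $u\neq v$ are joined by two parallel edges, forming a $2$-cycle $\gamma$. Then $K\setminus\gamma=K\setminus\{u,v\}$, so we need a different argument, and in any case the general situation of an honest cycle remains.

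\textbf{Step 2: producing the path.} Fix an edge $e=uv$ lying on a cycle $C$ of $K$, and let $K'=K\setminus\{e\}$ be $K$ with only this edge removed; $K'$ is still connected. Apply the path identity \eqref{eq:pathidentity} in $K$ to the pair $u,v$. Its left side is $m_{K\setminus\{u\}}(\theta)m_{K\setminus\{v\}}(\theta)-m_K(\theta)m_{K\setminus\{u,v\}}(\theta)$. Since $m_K(\theta)=0$, this equals $m_{K\setminus\{u\}}(\theta)m_{K\setminus\{v\}}(\theta)\neq0$. The right side is a sum of positive weights times squares $m_{K\setminus P}(\theta)^2$ over paths $P$ from $u$ to $v$, so some path $P$ has $m_{K\setminus P}(\theta)\neq0$; this is Lemma \ref{lemma:pathincritical}. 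If that $P$ is not the single edge $e$, then $P\cup e$ is a cycle $\gamma$ with $K\setminus\gamma=K\setminus P$, and we are done.

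\textbf{Main obstacle: every good $u$--$v$ path is the bare edge.} If the only good path is $P=\{u,v\}$ along $e$, we have $m_{K\setminus\{u,v\}}(\theta)\neq0$. In this case I would instead take $P$ to be the other arc $C\setminus e$ from $u$ to $v$. If $m_{K\setminus P}(\theta)\neq0$, then $P\cup e=C$ is the required cycle, since $K\setminus C=K\setminus P$. Otherwise $m_{K\setminus P}(\theta)=0$, which is exactly the hypothesis of Lemma \ref{lem: path lemma}. Its first alternative is excluded by essentiality, so it returns the required cycle $\gamma$. In either case the corollary follows.
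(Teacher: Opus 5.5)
Your proof is correct and is essentially the paper's argument: criticality gives $m_{K\setminus\{v\}}(\theta)\neq 0$ for every $v$, which rules out the first alternative of Lemma \ref{lem: path lemma}, and applying that lemma to the path $P$ obtained from a cycle $C$ by deleting one edge (so that $K\setminus P=K\setminus C$) yields the cycle. Your Step 2 detour through \eqref{eq:pathidentity} is harmless but unnecessary. The argument in your last paragraph works verbatim for any cycle $C$, including a self-loop or a $2$-cycle, without first ruling out other good $u$--$v$ paths.
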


\section{Spectral measure of atoms for maximal abelian covers}

\label{sec:abelian}

In this section, we analyse the atoms  of the  spectral measure of $\mathcal{H}^\ab$, the lift of $\mathcal{H}$ to the maximal abelian cover of $G = (V,E)$. Our main result is the following exact expression which is a refinement of the criterion of \(\ell^2\)-eigenvalues for the maximal abelian cover due to Magee, Sabri and the second and third authors.

Throughout, \(\mathcal C(G)\) will denote the set of
\(2\)-regular subgraphs of \(G\), including the empty subgraph.

\begin{theorem}\label{prop:multicriterion}
Let \(G = (V,E)\) be a finite graph. Then, for every
\(\theta\in\mathbb R\),
\[
\mu_{G^\mathrm{ab}}(\{\theta\})
=
\frac{1}{|V|}
\min_{\gamma\in\mathcal C(G)}
\mathrm{mult}_{\theta}
(G\setminus\gamma).
\]
\end{theorem}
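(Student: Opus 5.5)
We propose to realise \(G^{\mathrm{ab}}\) through the voltage graph \(G^{\per}\) of Example \ref{def of Gper}, with \(\Gamma=\Z^{E_+}\). Every connected component of \(G^{\per}\) is isomorphic to \(G^{\mathrm{ab}}\), with lifts of every \(v\in V\) in each component. Lemma \ref{lm;isodeterminedos} then gives \(\mu_{G^{\mathrm{ab}}}=\mu_{G^{\per}}\). The point of this choice is that each edge carries its own independent Fourier variable, so Floquet theory becomes very explicit.

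Since \(\Gamma=\Z^{E_+}\) is abelian, Fourier transform identifies \(\mathcal N\Gamma\) with \(L^\infty(\T^{E_+})\) and \(\tau_\Gamma\) with integration against Haar measure \(dz\). Under this identification \(\mathcal H_\Gamma\) becomes the matrix-valued function \(z\mapsto \mathcal H(z)\in M_V(\C)\). Here the entry of \(\mathcal H(z)\) associated with an oriented edge \(e\) is \(w_e z_e^{\pm1}\), and the diagonal is \(\mathcal V\). Hence
\[
\mu_{G^{\per}}(\{\theta\})=\frac{1}{|V|}\int_{\T^{E_+}}\dim\ker(\mathcal H(z)-\theta)\,dz .
\]
We then invoke the Godsil--Gutman type expansion from \cite{LiMageeSabriThomas}:
\[
\det(x-\mathcal H(z))=\sum_{\gamma\in\mathcal C(G)} c_\gamma(z)\, m_{G\setminus\gamma}(x),
\]
where \(c_\emptyset=1\). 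Each \(c_\gamma(z)\) is a sum, over the orientations of the cycles of \(\gamma\), of terms \(\pm\prod_{e}w_ez_e\). Since distinct \(\gamma\) use distinct edge sets, the \(c_\gamma\) are Laurent polynomials with distinct monomial supports, hence linearly independent. The main claim is that for Lebesgue-almost every \(z\),
\[
\dim\ker(\mathcal H(z)-\theta)=\operatorname{ord}_{x=\theta}\det(x-\mathcal H(z))=\min_{\gamma}\operatorname{mult}_\theta(G\setminus\gamma)=:k .
\]
The first equality holds because \(\mathcal H(z)\) is Hermitian, so algebraic and geometric multiplicities agree.

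For the second equality, write \(m_{G\setminus\gamma}(x)=(x-\theta)^k q_\gamma(x)\). Then \(\det(x-\mathcal H(z))=(x-\theta)^k\sum_\gamma c_\gamma(z)q_\gamma(x)\), so the order at \(\theta\) is at least \(k\) for every \(z\). Equality holds exactly when \(F(z):=\sum_\gamma c_\gamma(z)q_\gamma(\theta)\neq 0\). Some \(\gamma_0\) attains the minimum, so \(q_{\gamma_0}(\theta)\ne0\). The monomial supports of the \(c_\gamma\) are disjoint: a monomial \(\prod_{e\in\gamma}z_e^{\pm1}\) determines the edge set of \(\gamma\). Hence \(F\) is a nonzero Laurent polynomial on the torus, and its zero set has Haar measure zero. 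Integrating gives \(\mu_{G^{\per}}(\{\theta\})=k/|V|\), which is the statement.

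The step needing the most care is the bookkeeping for multi-edges and loops. A \(2\)-regular subgraph may contain a 2-cycle formed by two parallel edges, or a loop. Its coefficient \(c_\gamma\) must be checked to be a nonzero Laurent polynomial whose monomials (for instance \(z_ez_f^{-1}\) for parallel edges \(e,f\), or \(z_e+z_e^{-1}\) for a loop) are still distinct from those of other \(\gamma\). A further subtlety arises when a single edge traversed back and forth gives the constant monomial. Such terms are exactly the matching terms absorbed in the \(m_{G\setminus\gamma}\), and we need to confirm that the expansion from \cite{LiMageeSabriThomas} is organised so that they do not collide with \(c_\emptyset=1\). We would verify this directly from the permutation expansion of the determinant. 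The remaining identifications (Fourier picture of the trace and Lemma \ref{lm;isodeterminedos}) are routine.
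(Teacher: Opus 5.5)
Your proposal is correct and follows essentially the paper's proof. You pass to $G^{\per}$ via Lemma \ref{lm;isodeterminedos}, use the Floquet decomposition to write $\mu_{G^{\mathrm{ab}}}(\{\theta\})=\frac{1}{|V|}\int_{\T_G}\dim\ker(\mathcal H(z)-\theta I)\,\mathrm{d}z$, factor $(x-\theta)^k$ out of the determinant--matching expansion \eqref{eq:detmatch}, and show the reduced function is a nonzero trigonometric polynomial, hence vanishes only on a null set. The only cosmetic difference is that the paper indexes the expansion by oriented $2$-regular subgraphs and uses mutual orthogonality of the monomials $w_{\vec\gamma}$; this is the same as your disjoint-monomial-support argument after grouping orientations.
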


This motivates the following definition.

\begin{definition}
A finite graph $G$ satisfies \(\mathrm{LMST}(\theta)\) if \[m_{G\setminus\gamma}(\theta)=0 \qquad \forall \gamma \in \mathcal C(G). \]
\end{definition}
 The naming of this property is motivated by the that fact that a graph satisfies \(\mathrm{LMST}(\theta)\) if and only if \(\theta\) is an eigenvalue of $\mathcal{H}^\ab$, as proved in \cite{LiMageeSabriThomas}. 

The proof of Theorem \ref{prop:multicriterion} will borrow some ideas from \cite{LiMageeSabriThomas} (see also Example \ref{def of Gper} above). To this end, we introduce the following notation.

For a finite graph $G=(V,E)$, let $E_+$ be an assigment of orientation to each edge in $E$. Denote by \(\mathbb{Z}^{E_+}=\{(n_e)_{e\in E_+}\}\) the free abelian group generated by \(E_+\). Furthermore, denote by \(\mathbb T_G=\{(z_e)_{e\in E_+}\}\) the dual torus of \(\mathbb{Z}^{E_+}\) under the Fourier transform. For any induced subgraph $H$ of $G$, we will always see \(\mathbb T_H\) as a direct summand of \(\mathbb T_G\).

We apply the general theory introduced in Section \ref{sec:op_algs} to this voltage assignment, again, recall the discussion in Example \ref{def of Gper}.

\begin{definition}[The magnetic Schr\"odinger operator]\label{def:phipoly}
Given a Schr\"{o}dinger operator $\mathcal{H}$ on $\ell^2(V)$ of the form 
    $$\mathcal{H}=\sum_{e\in E_+}w_eA_e+\overline{w_e}A_{\overline{e}}+\mathcal{V},$$
we define 
    $$\mathcal{H}(z)=\sum_{e\in E_+}w_ez_eA_e+\overline{w_e}\,\overline{z_e}A_{\bar{e}}+\mathcal{V} \qquad \forall z=(z_e)_{e\in E_+}\in \T_G,$$
and set \[\phi_G=\phi_G(x,z):=\det(xI-\mathcal H(z)).\]

\end{definition}

The equivalence of $\theta$ being an $\ell^2$-eigenvalue in the spectrum of $\mathcal{H}$ on $G^\ab$ and $G$ satisfying LMST($\theta$) proven in \cite{LiMageeSabriThomas} relies on the observation eluded to in Example \ref{def of Gper} to see that $\theta$ is an $\ell^2$-eigenvalue if and only if 
\[\phi_G(\theta,z)\equiv 0 \qquad \forall z\in \T_G.\]

Recall the derived voltage graph \(G^\per\) built from the edge assignment from $\mathbb{Z}^{E_+}$ as defined in Example \ref{def of Gper}. We denote by $\mathcal{H}^\per$ the Schr\"{o}dinger operator $\mathcal{H}$ lifted from $G$ to $G^\per$; it is $\mathcal{H}_{\mathbb{Z}^{E_+}}$ in the language of Section \ref{sec:op_algs}. We denote its spectral measure by $\mu_{G^\per}$ which satisfies
    $$\mu_{G^\per} = \mu_{G^\ab},$$
since $G^\per$ is just countably many copies of $G^\ab$ as proven in \cite{LiMageeSabriThomas} and we can use Lemma \ref{lm;isodeterminedos}.

From Section \ref{sec:op_algs}, $\mathcal{H}^\per$ acts on 
    $$\ell^2(V(G^\per))\cong \ell^2(V)\otimes \ell^2(\Z^{E_+})$$
by
    $$\mathcal{V}\otimes I+\sum_{e\in E_+} w_eA_e\otimes \rho_{\mathbb{Z}^{E_+}}(E_e)+ w_{\bar{e}}A_{\bar{e}}\otimes \rho_{\mathbb{Z}^{E_+}}(E_{\bar{e}}),$$
where $E_e$ is the vector with $-1$ in the coordinate labelled by $e$ and zero elsewhere, and $E_{\overline{e}}$ is the same but with $-1$ replaced by $1$. 

The following can be obtained as a consequence of Floquet theory (see, for example \cite[Lemma 2.1]{sabri2023flat}) and is essentially contained in \cite[Proposition 2.4]{LiMageeSabriThomas}. We provide an abstract harmonic analysis proof here.

\begin{lemma}
    We have an isomorphism
        $$\ell^2(V)\otimes L^2(\Z^{E_+})\cong \int_{\T_G}^\oplus \ell^2(V)\,\mathrm{d}z,$$
    where $\mathrm{d}z$ is the uniform measure. Moreover, under this isomorphism, $\mathcal{H}^\per$ intertwines to 
        $$\int_{\T_G}^\oplus \mathcal{H}(z) \mathrm{d}z.$$
\end{lemma}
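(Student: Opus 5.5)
The plan is to use the Fourier transform on the free abelian group $\Z^{E_+}$ and check directly that it conjugates $\mathcal H^\per$ into multiplication by the matrix-valued function $z\mapsto \mathcal H(z)$. Let
\[
\mathcal F:\ell^2(\Z^{E_+})\to L^2(\T_G,\mathrm{d}z),\qquad (\mathcal F f)(z)=\sum_{n\in\Z^{E_+}} f(n)\,z^{n},\qquad z^n:=\prod_{e\in E_+} z_e^{n_e}.
\]
This map is unitary by Plancherel for the compact dual torus with normalised Haar measure, since the characters $z\mapsto z^n$ form an orthonormal basis of $L^2(\T_G)$. Tensoring with the identity on the finite-dimensional space $\ell^2(V)$ gives a unitary
\[
U=\mathrm{id}\otimes\mathcal F:\ \ell^2(V)\otimes\ell^2(\Z^{E_+})\to \ell^2(V)\otimes L^2(\T_G)\cong L^2(\T_G;\ell^2(V))=\int^\oplus_{\T_G}\ell^2(V)\,\mathrm{d}z,
\]
which is the claimed isomorphism.

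For the intertwining I would compute the image of each generator. The right regular representation acts by $\rho(g)\delta_h=\delta_{hg^{-1}}$, or with the other standard convention by $\delta_{hg}$. Then $\mathcal F\rho(g)\mathcal F^{-1}$ is multiplication by the character $z\mapsto z^{\pm g}$. For $g=E_e$ (which has $-1$ in coordinate $e$) the sign conventions should combine so that $\rho(E_e)$ becomes multiplication by $z_e$, and $\rho(E_{\bar e})=\rho(E_e)^{-1}=\rho(E_e)^*$ becomes multiplication by $\bar z_e=z_e^{-1}$ on $\T_G$.

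With these identifications, the summands transform as follows:
\begin{itemize}
\item $\mathcal V\otimes I$ becomes constant multiplication by $\mathcal V$;
\item $w_eA_e\otimes\rho(E_e)$ becomes multiplication by $w_ez_eA_e$;
\item $w_{\bar e}A_{\bar e}\otimes\rho(E_{\bar e})$ becomes multiplication by $\overline{w_e}\,\overline{z_e}A_{\bar e}$.
\end{itemize}
Summing these gives $U\mathcal H^\per U^{-1}=\int^\oplus\mathcal H(z)\,\mathrm{d}z$, that is, $(U\mathcal H^\per U^{-1}F)(z)=\mathcal H(z)F(z)$.

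This computation can be done on finitely supported vectors $\delta_v\otimes\delta_n$, where everything is an explicit finite sum. It then extends by boundedness, since $\mathcal H^\per$ is a finite sum of bounded operators and $z\mapsto\mathcal H(z)$ is continuous, hence uniformly bounded on the compact torus.

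The only point needing care is the sign convention: the choice $E_e=-\mathbf e_e$ together with the convention for $\rho$ (or for $\mathcal F$) must yield $z_e$ rather than $\bar z_e$. If the conventions give $\bar z_e$ instead, I would compose with the measure-preserving involution $z\mapsto\bar z$ of $\T_G$, which preserves $\mathrm{d}z$ and so still gives a unitary. Everything else is routine Plancherel theory.
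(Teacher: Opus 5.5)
Your proposal is correct and follows essentially the same route as the paper. The paper also tensors the Fourier transform $\delta_v\otimes\delta_n\mapsto (z\mapsto z^n\delta_v)$ with the identity and checks on the basis vectors $\delta_v\otimes\delta_n$ that $\mathcal H^\per$ becomes multiplication by $\mathcal H(z)$; your Plancherel and boundedness remarks make explicit what the paper leaves implicit. With the convention $\rho(g)\delta_h=\delta_{hg^{-1}}$ used in the paper's computation, your $\mathcal F$ sends $\rho(E_e)$ to multiplication by $z_e$, so the fallback involution $z\mapsto\bar z$ is not needed.
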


\begin{proof}
    For $v\in V$ and $(n_e)_{e\in E_+}$, let $\delta_v$ and $\delta_{(n_e)_e}$ be the delta functions in $\ell^2(V)$ and $\ell^2(\mathbb{Z}^{E_+})$ respectively. The pure tensors $\delta_v\otimes\delta_{(n_e)_{e}}$ span $\ell^2(V)\otimes \ell^2(\Z^{E_+})$. Now consider the map to $\int_{\T_G}^\oplus \ell^2(V)\,\mathrm{d}z$ given by
        $$\delta_v\otimes\delta_{(n_e)_{e}} \mapsto (e^{i (n_e)_e \cdot (\theta_e)_e}\delta_v)_{(e^{i\theta_e})_{e\in E_+}}.$$
    By extending linearly, this gives an isomorphism. 

    On these pure tensors, $\mathcal{H}^\per$ acts by
    $$\delta_v\otimes\delta_{(n_e)_{e}} \mapsto \mathcal{V}_v\delta_v\otimes\delta_{(n_e)_e}+\sum_{\substack{f\in E_+ \\ t(f)=v}} w_f\delta_{o(f)}\otimes \delta_{(n_{e})-E_{f}}+\sum_{\substack{f\in E_+ \\ o(f)=v}}w_{\bar{f}}\delta_{t(f)}\otimes\delta_{(n_{e})-E_{\bar{f}}}.$$
The latter is mapped to 
$$(e^{i (n_e)_e \cdot (\theta_e)_e}\mathcal{V}_v\delta_v+e^{i (n_e)_e \cdot (\theta_e)_e}\sum_{\substack{f\in E_+ \\ t(f)=v}} e^{i\theta_f}w_f\delta_{o(f)}+e^{i (n_e)_e \cdot (\theta_e)_e}\sum_{\substack{f\in E_+ \\ o(f)=v}} e^{-i\theta_f}w_{\bar{f}}\delta_{t(f)})_{(e^{i\theta_e})_{e\in E_+}},$$
which agrees with the action of $\int_{\T_G}^\oplus \mathcal{H}(z) \mathrm{d}z$.\end{proof}

We now prove Theorem \ref{prop:multicriterion}.

\begin{proof}[Proof of Theorem \ref{prop:multicriterion}]

By functional calculus, for any $\theta\in\R$, the operator
\(\mathbf 1_{\{\theta\}}(\mathcal H(z))\) is the orthogonal projection onto
\(\ker(\mathcal H(z)-\theta I)\).
Thus we have
\[
\operatorname{Tr}\bigl(\mathbf 1_{\{\theta\}}(\mathcal H(z))\bigr)
=
\dim\ker(\mathcal H(z)-\theta I).
\]
Then, by the equivalence of $\mu_{G^\ab}$ and $\mu_{G^\per}$, 

$$
\begin{aligned}
    \mu_{G^\mathrm{ab}}(\{\theta\})
=
\mu_{G^\mathrm{per}}(\{\theta\})
&=\frac{1}{|V|}
\sum_{v\in V}
\left\langle
\mathbf 1_{\{\theta\}}(\mathcal H^\per)\delta_{v} \otimes \delta_0,
\delta_{v} \otimes \delta_0
\right\rangle\\
&=\frac{1}{|V|}
\sum_{v\in V}
\left\langle
\mathbf 1_{\{\theta\}}\left(\int_{\T_G}^\oplus \mathcal{H}(z) \mathrm{d}z\right) \mathrm{1}\cdot \delta_{v} ,
\mathrm{1}\cdot \delta_{v} 
\right\rangle\\
&=\frac{1}{|V|}
\sum_{v\in V}
\left\langle
\left(\int_{\T_G}^\oplus \mathbf 1_{\{\theta\}}\left(\mathcal{H}(z) \right)\mathrm{d}z\right) \mathrm{1}\cdot \delta_{v} ,
\mathrm{1}\cdot \delta_{v} 
\right\rangle\\
&=\frac{1}{|V|}
\sum_{v\in V}
\int_{\T_G}\langle \mathbf 1_{\{\theta\}}\mathcal{H}(z) \delta_v,\delta_v\rangle\, \mathrm{d}z\\
&=\frac{1}{|V|} \int_{\T_G}\operatorname{Tr}\bigl(\mathbf 1_{\{\theta\}}(\mathcal H(z))\bigr)\,\mathrm{d}z\\
&=
\frac{1}{|V|}
\int_{\T_G}
\dim\ker(\mathcal H(z)-\theta I)\,\mathrm{d}z.
\end{aligned}
$$

By the determinant expansion of
\cite[Proposition 3.8]{LiMageeSabriThomas}, we have
\begin{equation}\label{eq:detmatch}
\phi_G(x,z)
=
\sum_{\vec\gamma }
(-1)^{\operatorname{cc}(\vec\gamma)}
m^{\mathcal H}_{G\setminus\vec\gamma}(x)\,
w_{\vec\gamma}(z),
\end{equation}
where the sum is over oriented \(2\)-regular subgraphs of \(G\), and the
functions \(w_{\vec\gamma}\) given by the product of $w_ez_e$ over each oriented edge $e$ in $\vec{\gamma}$ are mutually orthogonal on the torus $\mathbb{T}_G$. Note that
\(G\setminus\vec\gamma\) and
\(m^{\mathcal H}_{G\setminus \vec\gamma}\) depend only on the underlying
unoriented \(2\)-regular subgraph.

Let
\(
k:=
\min_{\gamma\in\mathcal C(G)}
\mathrm{mult}_{\theta}
(G\setminus\gamma).
\)
Thus every \(m^{\mathcal H}_{G\setminus\gamma}(x)\) is divisible by
\((x-\theta)^k\) and so \(\phi_G(x,z)\) is also for every
\(z\), and therefore
\[
\dim\ker(\mathcal H(z)-\theta I)\ge k,
\;\;
\forall z\in\T_G \qquad \text{and} \qquad
\mu_{G^\mathrm{ab}}(\{\theta\})\ge \frac{k}{|V|}.
\]
Now, choose \(\gamma_0\in\mathcal C(G)\) such that
\(
\mathrm{mult}_{\theta}
(G\setminus\gamma_0)=k,
\)
and choose an orientation \(\vec\gamma_0\) of \(\gamma_0\). Define
\[
Q(z):=
\left.
\frac{\phi_G(x,z)}{(x-\theta)^k}
\right|_{x=\theta}.
\]
Using the expansion above,
\[
Q(z)
=
\sum_{\vec\gamma}
(-1)^{\operatorname{cc}(\vec\gamma)}
\left.
\frac{m^{\mathcal H}_{G-\vec\gamma}(x)}
{(x-\theta)^k}
\right|_{x=\theta}
w_{\vec\gamma}(z).
\]
The coefficient of \(w_{\vec\gamma_0}(z)\) is non-zero. Since the functions
\(w_{\vec\gamma}\) are mutually orthogonal, \(Q\) is not identically zero.
Hence its zero set in \(\T_G\) has Haar measure zero. Note that 
\(\dim\ker(\mathcal H(z)-\theta I)> k\) if and only if \(Q(z)=0,\)
thus we have
\(
\dim\ker(\mathcal H(z)-\theta I)=k
\)
for almost every \(z\in\T_G\). Consequently,
\[
\mu_{G^\mathrm{ab}}(\{\theta\})
=
\frac{1}{|V|}
\int_{\T_G}
\dim\ker(\mathcal H(z)-\theta I)\,\mathrm{d}z
=
\frac{k}{|V|}.
\]
The final equivalence follows immediately from the formula.
\end{proof}

We conclude this section with a lemma regarding the decomposition of $\phi_G$ to subgraphs connected by a bridge in $G$ that will be of importance later. Recall that a bridge is an edge in $G$ whose removal disconnects the graph. {This is a direct consequence of determinant expansion and we omit the proof.}

\begin{lemma}\label{lemma:turnonoff}
    Let \(G\) be a finite graph with a bridge \(e\), and let \(K\) and \(L\) be the components of \(G\setminus e\) with \(u\in V(K) \) and \(v\in V(L)\) the two ends of the bridge. Then 
    \[\phi_K\phi_L-|w_e|^2\phi_{K\setminus\{u\}}\phi_{L\setminus\{v\}}=\phi_G,\]
    where the magnetic Schr\"odinger operator on \(K\) and \(L\) is the restriction of the magnetic Schr\"odinger operator from \(G\).
\end{lemma}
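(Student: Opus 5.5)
The plan is to prove the identity pointwise in $(x,z)\in\mathbb R\times\T_G$ by a Schur-complement computation on the matrix $xI-\mathcal H(z)$, written in block form with respect to the splitting $V=V(K)\sqcup V(L)$.

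\textbf{Block structure.} Since $e$ is a bridge, the only nonzero off-diagonal entries between the two blocks are the $(u,v)$ and $(v,u)$ entries. These are $-w_ez_e$ and $-\overline{w_e z_e}$, up to the orientation convention. The diagonal blocks are exactly $xI-\mathcal H_K(z)$ and $xI-\mathcal H_L(z)$, where $\mathcal H_K(z)$ and $\mathcal H_L(z)$ are the restrictions of the magnetic operator. Note that the variable $z_e$ appears only in the two off-diagonal entries.

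\textbf{Cofactor expansion.} Expand $\det(xI-\mathcal H(z))$ by the Leibniz formula and sort permutations of $V$ by how they interact with the cut.
\begin{itemize}
\item A permutation mapping $V(K)$ into itself contributes to $\det(xI-\mathcal H_K(z))\det(xI-\mathcal H_L(z))=\phi_K\phi_L$.
\item Any other permutation must send some vertex of $K$ to $L$ and some vertex of $L$ to $K$. The only available cross entries are at $(u,v)$ and $(v,u)$, so the permutation must contain the transposition $(u\,v)$ and fix $V(K)$ and $V(L)$ setwise off $\{u,v\}$.
\end{itemize}
A transposition has sign $-1$, and its two entries multiply to $|w_e z_e|^2=|w_e|^2$ because $|z_e|=1$. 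The remaining factor is $\det(xI-\mathcal H_{K\setminus\{u\}})\det(xI-\mathcal H_{L\setminus\{v\}})$. This gives the term $-|w_e|^2\phi_{K\setminus\{u\}}\phi_{L\setminus\{v\}}$, where the sign comes from the transposition together with the sign of the product of the two off-diagonal entries.

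\textbf{Main obstacle: the sign bookkeeping.} The delicate step is getting the overall sign right. The product $(-w_ez_e)(-\overline{w_ez_e})=|w_e|^2$ is positive, and the transposition contributes $-1$, so the net term is $-|w_e|^2\phi_{K\setminus\{u\}}\phi_{L\setminus\{v\}}$. This matches the Heilmann–Lieb vertex recursion \eqref{eq:vertexresursion} restricted to a bridge.

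As a sanity check, the identity can also be read off from \eqref{eq:detmatch}. Since no $2$-regular subgraph uses a bridge, each $\gamma$ splits as $\gamma_K\sqcup\gamma_L$. Applying the edge recursion \eqref{eq:multiedgerecursion} with $F=\{e\}$ to each $m_{G\setminus\gamma}$ then factorises the sum into the two products above.
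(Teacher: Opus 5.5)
Your proof is correct and is exactly the determinant expansion the paper has in mind. The paper omits the proof, calling the lemma a direct consequence of determinant expansion: the only permutations crossing the cut are those containing the transposition $(u\,v)$, and these contribute $-|w_e z_e|^2\phi_{K\setminus\{u\}}\phi_{L\setminus\{v\}} = -|w_e|^2\phi_{K\setminus\{u\}}\phi_{L\setminus\{v\}}$. One small point: what you actually carry out is a Leibniz expansion rather than the Schur-complement computation announced in your opening sentence, but nothing in the argument depends on that label.
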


\section{The lower bound: a generalised Spier's theorem}

\label{sec:spier}

In this section we aim to prove the lower bound in Theorem \ref{conj} \emph{without} using the criteria of Salez \cite[Theorem 2]{salez2020spectral} and Banks, Garza-Vargas and Mukherjee \cite[Corollary 3.4]{banks2022point} on \(\ell^2\)-eigenvalues of universal covering trees. In fact we reprove their criteria as a by-product of our arguments. The lower bound also serves as a multiplicity-refined version of Spier's theorem when $G_\Gamma$ corresponds to $G^\mathrm{uni}$. 

\begin{theorem}\label{thm:lowerbound} 
    Let \(G_\Gamma\) be a connected normal cover of a finite graph \(G\) with deck group \(\Gamma\). For any \(\theta\in \mathbb R\), 
\[\mu_{G^\mathrm{ab}}(\{\theta\}) \leq \mu_{G_\Gamma}(\{\theta\}).\]
\end{theorem}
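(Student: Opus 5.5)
The plan follows the outline given in the introduction. We prove the chain
\[
\mu_{G_\Gamma}(\{\theta\})\ \ge\ \frac{1}{|V|}\max_{S\in\mathcal A_\theta(G)}\bigl(\operatorname{cc}S-|\partial S|\bigr)\ \ge\ \frac{1}{|V|}\min_{\gamma\in\mathcal C(G)}\operatorname{mult}_\theta(G\setminus\gamma),
\]
and then invoke Theorem \ref{prop:multicriterion} to identify the right-hand side with $\mu_{G^\ab}(\{\theta\})$.

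\textbf{Step 1 (operator-algebraic bound).} Let $S\subset V$ be such that every component $K$ of $S$ is $\theta$-critical with respect to the restricted Schr\"odinger operator. Order $V$ as $S\sqcup\partial S\sqcup R$ and write $\mathcal H_\Gamma-\theta$ in block form in $M_V(\mathcal N\Gamma)$. The $S\times R$ blocks vanish. The $S\times S$ block is block diagonal over the components $K$ of $S$, giving $\mathcal H_{K,\Gamma}-\theta$ on $\pi_\Gamma^{-1}(K)$.

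For each such $K$ we want $\dim_\Gamma\ker(\mathcal H_{K,\Gamma}-\theta)\ge 1$, i.e.\ $\operatorname{rank}_\Gamma\le |K|-1$. If $K$ is a tree, each component of $\pi_\Gamma^{-1}(K)$ is a copy of $K$. Hence $\mathcal H_{K,\Gamma}$ is a direct sum of copies of $\mathcal H_K$, whose characteristic polynomial equals $m_K$, which vanishes at $\theta$. In general, $K$ may contain cycles whose lifts unfold, and this is where the real content lies. I would try to show that $\theta$-critical $K$ gives kernel dimension at least one uniformly over covers. The first idea is to use the $\theta$-essential vertices and Lemma \ref{lemma:pathincritical} to build finitely supported $\theta$-eigenvectors on the cover, of density one per fibre. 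Alternatively, one can restrict to $S$ given by the Gallai--Edmonds set $D_\theta(G\setminus\gamma)$ in Step 2; this is at least as hard, since those components may also contain cycles.

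Granting this, $\operatorname{rank}_\Gamma$ of the rows indexed by $S$ is at most $|S|-\operatorname{cc}S+|\partial S|$. This follows from \eqref{eq:rankineq} by splitting the row block into $[\,\text{diag}\mid 0\mid 0\,]$ plus the $S\times\partial S$ part, which has rank at most $|\partial S|$. The rows indexed by $V\setminus S$ contribute rank at most $|V|-|S|$. Adding, $\operatorname{rank}_\Gamma(\mathcal H_\Gamma-\theta)\le |V|-\operatorname{cc}S+|\partial S|$, and \eqref{eq:vonNeumannnull} gives $\dim_\Gamma\ker\ge \operatorname{cc}S-|\partial S|$.

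\textbf{Step 2 (combinatorial inequality).} Choose $\gamma_0\in\mathcal C(G)$ minimising $\operatorname{mult}_\theta(G\setminus\gamma)$. The natural guess is $S=D_\theta(G\setminus\gamma_0)$, using Proposition \ref{prop:mult-roots}: its components are $\theta$-critical and $\operatorname{cc}-|\partial|=\operatorname{mult}_\theta(G\setminus\gamma_0)$. But the boundary must be taken in $G$, not in $G\setminus\gamma_0$, and vertices of $\gamma_0$ adjacent to $D_\theta$ would enlarge $\partial S$. So I would instead take $S=D_\theta(G)$ relative to a minimiser. Then I would show that, by minimality of $\gamma_0$ and Corollary \ref{cor:cycleincritical} together with Lemma \ref{lem: path lemma}, either a cycle can be rerouted to lower the multiplicity (contradicting minimality), or the bipartite graph contracting critical components against $\partial S$ has surplus configurations forcing $\operatorname{cc}S-|\partial S|\ge k$. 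Hall's theorem (Theorem \ref{thm:Halls_thm}) applied to the contracted bipartite graph $G_\theta$ should give this counting.

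\textbf{Main obstacle.} The hardest part is this matching argument in Step 2, together with the per-component kernel bound in Step 1 for critical components containing cycles. The latter must hold for every cover; if needed, I would restrict to choices of $S$ whose critical components are trees, or whose cycles $\gamma$ satisfy $m_{K\setminus\gamma}(\theta)=0$.
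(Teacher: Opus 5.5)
\textbf{Step 1 only works for forests.} Your rank count in Step 1 is correct and is the paper's argument. However, it only applies when every component of $S$ is a tree. In that case $\pi_\Gamma^{-1}(K)$ is a disjoint union of copies of $K$, and $\dim_\Gamma\ker(\mathcal H_{K,\Gamma}-\theta)\ge 1$ because the characteristic polynomial of $\mathcal H_K$ equals $m_K$.

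The extension you hope for, to $\theta$-critical components containing cycles, is not merely hard but false. The triangle with the adjacency operator is $0$-critical: $m_{K_3}(x)=x^3-3x$ while $m_{K_2}(0)=-1$. Yet its universal and maximal abelian covers are both the bi-infinite path, which has no $\ell^2$-eigenvalue at $0$.

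In general, Corollary \ref{cor:cycleincritical} together with Theorem \ref{prop:multicriterion} gives $\mu_{K^\ab}(\{\theta\})=0$ for every connected $\theta$-critical $K$ that is not a tree. The same corollary shows that your fallback (components all of whose cycles $\gamma$ satisfy $m_{K\setminus\gamma}(\theta)=0$) consists only of trees. Consequently $S=D_\theta(G)$, your choice in Step 2, is not admissible in Step 1 unless $D_\theta(G)$ happens to be a forest.

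\textbf{Step 2 is the missing core.} You need a union $S$ of acyclic components of $D_\theta(G)$ with $\operatorname{cc}S-|\partial S|\ge\min_{\gamma\in\mathcal C(G)}\operatorname{mult}_\theta(G\setminus\gamma)$. Your proposal gives no argument for this (``should give this counting''). This inequality is exactly Lemma \ref{lem: kappa_lb}. The paper proves it in the direction opposite to yours, which also removes the boundary problem you noticed.

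The argument runs as follows.
\begin{enumerate}[(i)]
\item Take $S_0$ maximising $\operatorname{cc}S-|\partial S|$ over such unions, with value $\kappa_\theta$. The case $m_G(\theta)\neq0$ is trivial.
\item Maximality yields two Hall conditions. Every $I\subset\partial S_0$ meets at least $|I|$ components of $S_0$. Every $m$ acyclic components outside $S_0$ meet at least $m$ vertices of $\partial D_\theta\setminus\partial S_0$.
\item Combined with the positive surplus of $G_\theta$ from $\partial D_\theta$ (Lemma \ref{lemma:Halltype}), this gives a maximum matching $M_{\max}$ of $G_\theta$. It covers $\partial D_\theta$, $|\partial S_0|$ components of $S_0$, and every acyclic component outside $S_0$.
\item From each cyclic component $Q_i$ of $D_\theta$ missed by $M_{\max}$, delete a cycle $\gamma_i$ with $m_{Q_i\setminus\gamma_i}(\theta)\neq0$. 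Set $\gamma=\bigcup_i\gamma_i$.
\item Expand $m_{G\setminus\gamma}$ with the multi-edge recursion \eqref{eq:multiedgerecursion}, taking $F$ to be the edges between $\partial D_\theta$ and $D_\theta\setminus\gamma$. Terms whose matching misses more than $\kappa_\theta$ components of $S_0$, or misses a component other than those of $S_0$ and the $Q_i$, vanish to order at least $\kappa_\theta+1$.
\item The remaining terms vanish to order exactly $\kappa_\theta$, and there is at least one of them (coming from $M_{\max}$). They cannot cancel, since all have $|E(M)|=|\partial D_\theta|$ and positive weights. Hence $\min_\gamma\operatorname{mult}_\theta(G\setminus\gamma)\le\kappa_\theta$.
\end{enumerate}

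Because $\gamma$ lies inside components of $D_\theta(G)$, it never meets $\partial S_0$. That is why building $\gamma$ from an optimal $S$ works, whereas extracting $S$ from an optimal $\gamma_0$ runs into the enlarged-boundary problem. Without this construction or a substitute, your proof does not go through.
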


Towards this, we introduce the following associated graph from $G$ and $\theta\in\R$.

\begin{definition}[The associated graph]\label{def:associatedbipartite}
    Given a finite graph $G$ and $\theta\in\R$, the associated graph \({G}_{\theta}\) is a bipartite graph associated with \(G\) with vertex set \[V(G_{\theta})=\partial  D_\theta \sqcup \{K_i: K_i \text{ is a component of}\;D_\theta\}.\]
We abuse the notation and define the edge set \(E(G_{\theta})\) as the edge boundary of \(D_\theta\) in \(G\), where each edge between \(u\in \partial  D_\theta\) and a connected component \(K_j\) of \(D_\theta\) corresponds to an edge connecting them in the original graph \(G\). 
\end{definition}

The following result is due to Spier \cite[Corollary 32]{spier2023refined} but we provide a different proof that highlights an application of \eqref{eq:multiedgerecursion} that will be used repeatedly throughout the article as mentioned in the introduction.

\begin{lemma}\label{lemma:Halltype}
    Let \(G\) be a finite graph such that \(\mathrm{mult}_{\theta}(G)>0\) for some \(\theta\in \mathbb R\). Then 
    \(G_\theta\) has positive surplus from 
    \(\partial  D_\theta\).
\end{lemma}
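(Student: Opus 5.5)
We prove positive surplus through the reformulation in Hall's theorem (Theorem \ref{thm:Halls_thm}): for every $u\in\partial D_\theta$, after deleting one component vertex $K_j$ from $G_\theta$ (note that $\partial D_\theta$ plays the role of $X$ and the components $K_i$ the role of $Y$), there is a matching of $G_\theta\setminus\{K_j\}$ covering all of $\partial D_\theta$. We will not construct such matchings directly. Instead we argue by contradiction, using the multi-edge recursion \eqref{eq:multiedgerecursion} with $F$ equal to the edge boundary of $D_\theta$.

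\textbf{Step 1.} Fix a component $K_j$ and a vertex $w\in V(K_j)$. Since $w$ is $\theta$-essential, $\operatorname{mult}_\theta(G\setminus\{w\})=\operatorname{mult}_\theta(G)-1$. We compute $\operatorname{mult}_\theta(G\setminus\{w\})$ a second way. Apply \eqref{eq:multiedgerecursion} to $G'=G\setminus\{w\}$ with $F$ the set of edges between $\partial D_\theta$ and $D_\theta\setminus\{w\}$.

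The graph $G'\setminus F$ is a disjoint union of three pieces: the $K_i$ with $i\neq j$, the piece $K_j\setminus\{w\}$, and $R:=G\setminus D_\theta$. Its matching polynomial factors accordingly. We read off the orders of vanishing at $\theta$ of the factors:
\begin{itemize}
\item each $m_{K_i}$ vanishes to order exactly $1$, since $K_i$ is $\theta$-critical (Theorem \ref{thm:generalisedGallaiEdmonds}(i)) and connected;
\item $m_{K_j\setminus\{w\}}(\theta)\neq0$, since $w$ is essential in $K_j$;
\item after a matching $M\subset F$ removes some vertices, the remaining critical components are intact or have lost a vertex. A component losing a vertex contributes order $0$.
\end{itemize}

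\textbf{Step 2.} By repeatedly applying the stability lemma (Theorem \ref{thm:generalisedGallaiEdmonds}(ii)) to the special vertices, removing all of $\partial D_\theta$ does not change the types of the other vertices. Hence $R\setminus\partial D_\theta$ has no $\theta$-essential vertices, so $m_{R\setminus \partial D_\theta}(\theta)\ne0$ by Lemma \ref{lm:essenexistence}. More generally, for $S\subset\partial D_\theta$, $\operatorname{mult}_\theta(R\setminus S)$ equals $|\partial D_\theta\setminus S|$. This is because each special vertex is $\theta$-positive and remains so after deletions, by stability.

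A term indexed by a matching $M\subset F$ that covers a set $S\subset\partial D_\theta$ and hits $|S|$ distinct components $K_i\neq K_j$ therefore vanishes to order
\[(\operatorname{cc}D_\theta-1-|S|)+|\partial D_\theta|-|S| .\]
This is at least $\operatorname{mult}_\theta(G)-1=\operatorname{cc}D_\theta-|\partial D_\theta|-1$. Equality forces $|S|=|\partial D_\theta|$, that is, $M$ is a matching of $G_\theta\setminus\{K_j\}$ saturating $\partial D_\theta$.

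\textbf{Step 3.} If no such saturating matching exists, every term vanishes to order at least $\operatorname{mult}_\theta(G)$. Then $\operatorname{mult}_\theta(G\setminus\{w\})\ge \operatorname{mult}_\theta(G)$, contradicting the fact that $w$ is essential. So for every $j$ such a matching exists, and Hall's theorem gives positive surplus. The hypothesis $\operatorname{mult}_\theta(G)>0$ guarantees that $D_\theta\neq\varnothing$, so there is a $K_j$ to delete.

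The main obstacle is the sign bookkeeping in the contradiction direction. When saturating matchings do exist, the leading coefficients might cancel, but this only matters for the converse, not here. The real difficulty is the order count in Step 2 for $R\setminus S$. It rests on the claim that deleting special vertices one at a time raises the multiplicity by exactly one each time. I would prove this by induction, using the fact that special vertices are $\theta$-positive together with the stability lemma, and I expect this to be the delicate point. A further care point is multi-edges between a single $u$ and a single $K_i$: several edges of $F$ then correspond to the same pair, but each such edge only produces its own term, so the counting is unaffected.
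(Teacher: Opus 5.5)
Your strategy matches the paper's: argue by contradiction, apply \eqref{eq:multiedgerecursion} to $G\setminus\{w\}$ with $w$ in the offending component, and compare orders of vanishing at $\theta$ with $\operatorname{mult}_\theta(G)-1$. However, the Step~2 claim that $\operatorname{mult}_\theta(R\setminus S)=|\partial D_\theta\setminus S|$ for $S\subset\partial D_\theta$ is false, so the induction you flag as the delicate point cannot be carried out.

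Take the adjacency operator at $\theta=0$ on the double star: $b_1\sim b_2$, with leaves $a_1,a_2$ at $b_1$ and $c_1,c_2$ at $b_2$. Then $m_G(x)=x^2(x^4-5x^2+4)$. The four leaves are $0$-essential, so $\partial D_0=\{b_1,b_2\}$. But $R=G\setminus D_0$ is the single edge $b_1b_2$, and $m_R(0)=-1\neq 0$, whereas your claim predicts multiplicity $2$.

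The stability lemma only concerns deleting special vertices from $G$. A special vertex is positive because it is adjacent to critical components, and that is lost once $D_\theta$ is removed. Interlacing only gives $\operatorname{mult}_\theta(R\setminus S)\le|\partial D_\theta\setminus S|$, which is the wrong direction for you.

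Two further points are inaccurate. First, ``a component losing a vertex contributes order $0$'' fails for $K_j\setminus\{w\}$ when $M$ removes further vertices of $K_j$. For example, a triangle at $\theta=0$ with two vertices removed leaves a single vertex of multiplicity $1$. Second, your order formula only covers matchings that hit $|S|$ distinct components other than $K_j$. Matchings that hit $K_j\setminus\{w\}$, or hit the same component twice, are not treated.

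None of this is needed. Bound every factor's multiplicity below by $0$, except for the untouched $K_i$ with $i\neq j$. Suppose $G_\theta\setminus\{K_j\}$ has no matching saturating $\partial D_\theta$. Choosing one edge of $M$ per touched component gives a matching in $G_\theta\setminus\{K_j\}$. Hence any $M$ with $E(M)\subset F$ touches at most $|\partial D_\theta|-1$ of the $K_i$, $i\neq j$. So at least $\operatorname{cc}D_\theta-|\partial D_\theta|=\operatorname{mult}_\theta(G)$ of them survive as isolated $\theta$-critical components, each contributing a factor vanishing at $\theta$. This yields $\operatorname{mult}_\theta(G\setminus\{w\})\ge\operatorname{mult}_\theta(G)$ directly, which contradicts $w$ being essential.

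That repaired argument is exactly the paper's proof. The paper also leaves the edges into the deleted component out of $F$, which avoids the $K_j\setminus\{w\}$ issue entirely.
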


\begin{proof}
    By Theorem \ref{thm:Halls_thm} it is equivalent to show that after deleting any component in $D_\theta$, there exists a matching in $G_\theta$ that contains all of $\partial D_\theta$. Suppose for a contradiction that this is not the case, and let $K_0$ be component in $D_\theta$ for which this fails. 

    It means that any matching in $G_\theta$ that does not contain the component $K_0$ contains at most $|\partial  D_\theta|-1$ vertices in $\partial  D_\theta$ and hence at most $|\partial  D_\theta|-1$ components in $D_\theta$ outside of $K_0$. 
    
    Fix any vertex \(v\) in the component \(K_0\). Let \(F\) be the edges in $G$ between \(\partial  D_\theta\) and vertices in components of \(D_\theta\) outside of \(K_0\). Recall that by \eqref{eq:multiedgerecursion},
    \[
       m_{G\setminus\{v\}}(x)=\sum_{M \in \mathcal{M}_{G\setminus\{v\}},\; E(M) \subset F} (-1)^{|E(M)|}\prod_{e\in \vec{E}(M)}w_e\cdot m_{(G\setminus F)\setminus (V(M)\sqcup\{v\})}(x).
    \]

    Note that any matching considered in this formula can contain vertices from at most $|\partial  D_\theta|-1$ different components in $D_\theta$ with $K_0$ removed. Indeed if not, then we could pick a single edge in this matching that land in these distinct components and form a matching in $G_\theta$ that contains at $|\partial  D_\theta|$ components in $D_\theta$ without $K_0$, contradicting the selection of $K_0$. 

    Now given such a matching $M$, the components of $D_\theta$ that are not touched by $M$ become isolated components in $(G\setminus F)\setminus(V(M)\sqcup\{v\})$ because $F$ contains all edges that connect them to the rest of the graph. By Proposition \ref{prop:mult-roots}, these components are $\theta$-critical and so have $\theta$ as a root of their matching polynomial.
    
    The matching polynomial of a disconnected graph is the product of the matching polynomials of its components and so for each unmatched component, $m_{(G\setminus F)\setminus (V(M)\sqcup\{v\})}(x)$ gains a multiplicity of at least $1$ for $\theta$ as a root. By the above, the number of unmatched components is at least
        $$|\operatorname{cc}D_\theta |-1-(|\partial  D_\theta|-1)=\operatorname{cc} D_\theta-|\partial  D_\theta |,$$
    thus, 
        $$\mathrm{mult}_{(G\setminus F)\setminus(V(M)\sqcup\{v\})}(\theta)\geq \operatorname{cc} D_\theta-|\partial  D_\theta |,$$
    and by the above formula, the same is true for $\mathrm{mult}_{G\setminus\{v\}}(\theta)$. On the other hand, $v$ is $\theta$-essential in $G$ and so
        $$\mathrm{mult}_{G}(\theta)-1=\mathrm{mult}_{G\setminus\{v\}}(\theta)\geq \operatorname{cc} D_\theta-|\partial  D_\theta |,$$
    but this gives a contradiction to Proposition \ref{prop:mult-roots}.
\end{proof}

We will see that the important components of $D_\theta$ that will be related to the spectral measure of atoms are those that are acyclic. This motivates the following definition.

\begin{definition}\label{def:refinedAomoto}
    Let \(\mathcal{T}_{\theta}\) be the collection of induced subgraphs of \(G\), of which each connected component of is an acyclic \(\theta\)-critical component of \(G\). 
Define 
\[\kappa_{\theta}:=\max_{S\in \mathcal{T}_\theta}\left(\operatorname{cc} S-|\partial  S|\right).\]
\end{definition}

We now prove the following.

\begin{lemma}
\label{lem: kappa_lb}
Let \(G = (V,E)\) be a finite graph such that \(\mathrm{mult}_{\theta}(G)>0\) for some \(\theta\in \mathbb R\). We have
    \[\kappa_{\theta}\geq \min_{\gamma \in \mathcal C(G)}
\operatorname{mult}_\theta(G\setminus\gamma)
.\]
In particular, for every
\(\theta\in\mathbb R\),
\[
\mu_{G^\mathrm{ab}}(\{\theta\})
\leq \frac{\kappa_\theta}{|V|}.
\]
\end{lemma}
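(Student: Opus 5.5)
The plan is to exhibit a single $2$-regular subgraph $\gamma$ with $\operatorname{mult}_\theta(G\setminus\gamma)\le \operatorname{cc}S-|\partial S|$ for a suitable $S\in\mathcal T_\theta$. The second statement then follows immediately from Theorem \ref{prop:multicriterion}. When $\operatorname{mult}_\theta(G)=0$, the choice $\gamma=\varnothing$ already gives $0\le\kappa_\theta$, with the empty $S$.

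\textbf{Step 1: choose $\gamma$.} Let $K_1,\dots,K_r$ be the components of $D_\theta=D_\theta(G)$; by Proposition \ref{prop:mult-roots} they are $\theta$-critical. Split them into the acyclic ones, whose union is $T$, and the non-acyclic ones. For each non-acyclic $K_i$, Corollary \ref{cor:cycleincritical} gives a cycle $\gamma_i\subset K_i$ with $m_{K_i\setminus\gamma_i}(\theta)\neq0$. Set $\gamma=\bigsqcup_i\gamma_i$; this is vertex-disjoint, so $\gamma\in\mathcal C(G)$. Write $N=G\setminus(D_\theta\cup\partial D_\theta)$.

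\textbf{Step 2: structure after deleting $\partial D_\theta$.} Every vertex of $\partial D_\theta$ is $\theta$-special. By repeated use of the stability lemma (Theorem \ref{thm:generalisedGallaiEdmonds}(ii)), deleting them one by one keeps the vertex classes unchanged. So $G\setminus\partial D_\theta=D_\theta\sqcup N$ has essential set exactly $D_\theta$ with empty boundary. Proposition \ref{prop:mult-roots} then gives $\operatorname{mult}_\theta(N)=0$, and hence
\[\operatorname{mult}_\theta\bigl(G\setminus(\gamma\cup\partial D_\theta)\bigr)=\operatorname{cc}T,\]
since the trees contribute $1$ each, the pieces $K_i\setminus\gamma_i$ contribute $0$, and $N$ contributes $0$.

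\textbf{Step 3: putting $\partial D_\theta$ back.} Naive interlacing only gives $\operatorname{mult}_\theta(G\setminus\gamma)\le\operatorname{cc}T+|\partial D_\theta|$, which has the wrong sign. Instead I would expand $m_{G\setminus\gamma}$ via \eqref{eq:multiedgerecursion}, with $F$ the set of edges between $\partial D_\theta$ and $D_\theta$. Each term is indexed by a matching $M\subset F$, that is, a matching in the associated graph $G_\theta$ of Definition \ref{def:associatedbipartite}. In that term's graph:
\begin{itemize}
\item an unmatched tree component contributes a factor vanishing to order exactly $1$;
\item a matched tree component loses an essential vertex and so contributes order $0$;
\item a cyclic component $K_i\setminus(\gamma_i\cup V(M))$ should be handled by choosing $\gamma_i$ compatibly with the matched vertex, using Lemma \ref{lemma:pathincritical} and Lemma \ref{lem: path lemma} to reroute $\gamma_i$ through it.
\end{itemize}
The remaining part $N\cup(\partial D_\theta\setminus V(M))$ with $F$ removed should have order $0$ at $\theta$, by Step 2 and stability.

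The minimal order over these terms is then $\operatorname{cc}T-\nu$, where $\nu$ is the maximal number of tree components that can be matched. Lemma \ref{lemma:Halltype} (positive surplus) ensures boundary vertices can be used on cyclic components too. By König/Hall duality in $G_\theta$ restricted to the trees, $\operatorname{cc}T-\nu=\max_{S\subset T}(\operatorname{cc}S-|\partial S|)\le\kappa_\theta$, where $\partial S$ counts only boundary vertices adjacent to $S$.

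\textbf{Main obstacle.} The hard part is the last step: showing that the lowest-order terms of the expansion do not cancel, so that $\operatorname{mult}_\theta(G\setminus\gamma)$ is really bounded by that minimal order. The coefficients $\prod w_e$ are all positive, but the signs $(-1)^{|E(M)|}$ and the values of the leading Taylor coefficients at $\theta$ are not controlled. To handle this, I would first try to replace the full expansion by an inductive argument: delete one special vertex $u\in\partial T$ at a time, matched to a tree of $S$ by a Hall matching, and use the vertex recursion \eqref{eq:vertexresursion} at that tree's vertex to show each deletion lowers the multiplicity by exactly one. This amounts to a small converse Gallai–Edmonds statement, which I expect to be the main difficulty.
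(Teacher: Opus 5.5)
There is a genuine gap. It lies in Step 1, together with the Step 3 claim that $N\cup(\partial D_\theta\setminus V(M))$ contributes order $0$; it is not only the cancellation issue you flag. Breaking a cycle in \emph{every} cyclic component of $D_\theta$ is the wrong choice of $\gamma$. Step 2 says nothing about special vertices left unmatched by $M$, and once all cyclic components are broken some special vertices may have nowhere to be matched.

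Concretely, take $\mathcal H$ the adjacency operator and $\theta=0$. Let $G$ consist of two triangles $b_1b_2b_3$ and $c_1c_2c_3$, a path $x\,a\,y$, and the edges $xb_1$, $yc_1$.
\begin{itemize}
\item The components of $D_0$ are $\{a\}$ and the two triangles, $\partial D_0=\{x,y\}$, and $\operatorname{mult}_0(G)=1$.
\item $\kappa_0=0$: the only nonempty $S\in\mathcal T_0$ is $\{a\}$, and it has $\operatorname{cc}S-|\partial S|=-1$.
\item Each triangle's only cycle is itself, so your $\gamma$ is forced to be both triangles. Then $G\setminus\gamma$ is the path $x\,a\,y$, with $m(x)=x^3-2x$, so $\operatorname{mult}_0(G\setminus\gamma)=1>\kappa_0$.
\item In the term $M=\{xa\}$, the leftover isolated special vertex $y$ is exactly what carries the root.
\item Removing only one triangle instead leaves a graph with a perfect matching.
\end{itemize}
Rerouting via Lemmas \ref{lemma:pathincritical} and \ref{lem: path lemma} cannot help here. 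In any case $\gamma$ is fixed before the expansion, so it cannot be adapted to each $M$.

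Your defect-Hall identification of the target order $\operatorname{cc}T-\nu$ with $\kappa_\theta$ is correct. What is missing is the following idea. The cyclic components to be broken must be chosen globally via a matching in $G_\theta$, and the others must be left \emph{intact} so they can absorb special vertices; an intact $\theta$-critical component minus one vertex has no root at $\theta$. The paper does this as follows.
\begin{itemize}
\item It takes a maximiser $S_0$ for $\kappa_\theta$.
\item Maximality gives two Hall conditions: $\partial S_0$ matches into distinct components of $S_0$, and the acyclic components outside $S_0$ match into $\partial D_\theta\setminus\partial S_0$.
\item These combine and extend to a maximum matching $M_{\max}$ of $G_\theta$, which saturates $\partial D_\theta$ by Lemma \ref{lemma:Halltype}.
\item A cycle from Corollary \ref{cor:cycleincritical} is removed only in the cyclic components $Q_i$ missed by $M_{\max}$.
\end{itemize}
Now expand by \eqref{eq:multiedgerecursion}, with $F$ the edges from $\partial D_\theta$ to $D_\theta\setminus\gamma$. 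Suppose $M$ covers fewer than $|\partial S_0|$ components of $S_0$, or misses an intact component outside $S_0$. Then $M$ leaves at least $\kappa_\theta+1$ isolated critical components, so its term has order at least $\kappa_\theta+1$ whatever the unmatched special vertices do. Every remaining term saturates $\partial D_\theta$, has $|E(M)|=|\partial D_\theta|$, and has order exactly $\kappa_\theta$.

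This also settles your non-cancellation worry. In those lowest-order terms, the factors from $N$ and from the $Q_i\setminus\gamma_i$ are common to all of them. Each other component $K$ of $D_\theta$ contributes either $m_K'(\theta)$ (if untouched) or $m_{K\setminus v}(\theta)$. These have the same sign for every $v$, because $m_{K\setminus v}$ interlaces $m_K$ and $\theta$ is a simple root of $m_K$. Hence all lowest-order coefficients add with a common sign. Your inductive fallback does not address the real problem, which is already the choice of $\gamma$; note also that special vertices are $\theta$-positive, so deleting them raises rather than lowers the multiplicity.
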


\begin{proof}
    Suppose that \(S_0\in \mathcal T_{\theta}\) achieves the maximum in the definition of $\kappa_\theta$. We start with two claims on the matching structure of the bipartite graph \(G_\theta\) constructed in Definition \ref{def:associatedbipartite}.

    \textbf{Claim 1.} {\em Either \(\partial  S_0\) is empty or it satisfies the following Hall marriage theorem type condition: 
    Any \(I \subset \partial  S_0\) connects to at least \(|I|\) components of \(S_0\).}
    
    \textit{Proof of Claim 1.} Denote the number of components of \(S_0\) connecting to \(I\) by \(k\), and let \(S_0'\) be the union of all components of \(S_0\) that are not connected to \(I\). By definition, we have 
    \[\operatorname{cc} S_0'= \operatorname{cc} S_0-k\qquad \text{and} \qquad \partial  S_0'=\partial  S_0\setminus I,\]
    where the boundaries are taken in $G$. Therefore, we have 
    \[\operatorname{cc} S_0-\left|\partial  S_0\right|\geq \operatorname{cc} S'_0-\left|\partial  S'_0\right|=\operatorname{cc} S_0-\left|\partial  S_0\right|+|I|-k,\]
    where the first inequality follows since \(S_0\) is a maximizer. Therefore \(k\geq |I|\) as desired.

    \textbf{Claim 2.} {\em Either the set of acyclic components of \(D_\theta(G)\setminus S_0\) is empty or it satisfies a Hall marriage theorem type condition: Any \(m\) acyclic components of \(D_\theta(G)\setminus S_0\) connect to at least \(m\) vertices in \(\partial  D_\theta(G)\setminus \partial  S_0\) in the graph \(G\).}
    
    \textit{Proof of Claim 2.} Suppose the claim is false, then there exists \(m\) acyclic components of \(D_\theta\setminus S_0\) that connect to at most \(m-1\) vertices in \(\partial  D_\theta\setminus \partial  S_0\). Let \(S_0'\) be the union of these components along with \(S_0\), then \[\operatorname{cc} S_0'= \operatorname{cc} S_0+m\qquad \text{and} \qquad \left|\partial  S_0'\right|\leq\left|\partial  S_0\right|+m-1.\]
    Therefore, we have 
    \(\operatorname{cc} S'_0-\left|\partial  S_0'\right|\geq \operatorname{cc} S_0-\left|\partial  S_0 \right|+1\), but this contradicts the fact that \(S_0\) is a maximizer.

    We now apply Theorem \ref{thm:Halls_thm}. By Claim 1, we can find a matching in $G_\theta$ (and hence in $G$) that contains all vertices in $\partial S_0$. Similarly, by Claim 2, we can find a disjoint matching in $G_\theta$ that contains all acyclic components in $D_\theta(G)\setminus S_0$.

    Due to the disjointness, we can combine these matchings into a single matching and then use Remark \ref{thm:Berge} to obtain a maximum matching $M_\mathrm{max}$ in $G_\theta$ containing at least $|\partial  S_0|$ components of $S_0$ and all acyclic components of $D_\theta\setminus S_0$. Since by Lemma \ref{lemma:Halltype}, $G_\theta$ has positive surplus from $\partial  D_\theta$, it contains a matching containing all of $\partial  D_\theta$ by Theorem \ref{thm:Halls_thm}. In particular, a maximum matching such as $M_\mathrm{max}$ contains all vertices in $\partial  D_\theta$. 
    
    Let \(Q_1, \cdots, Q_t\) be the \(\theta\)-critical components in \(G_\theta\) that are not contained in $M_\mathrm{max}$ that contain a cycle; in particular, they are contained in $D_\theta\setminus S_0$. By Corollary \ref{cor:cycleincritical}, in each \(Q_i\) we can find a cycle \(\gamma_i\) such that \(\mathrm{mult}_{Q_i\setminus \gamma_i}(\theta)=0.\) Define the degree-\(2\) subgraph \(\gamma\) in \(G\) by
    \(\gamma=\cup_{i=1}^t\gamma_i.\)
    We now show that 
    \[\mathrm{mult}_{G\setminus\gamma}(\theta)= \kappa_{\theta}.\]

    Note that the graph \(G\setminus\gamma\) still contains all vertices from \(\partial  D_{\theta} \) and hence all vertices from \(\partial  S_0\). It also contains the maximizer \(S_0\) as a subgraph.
    Let \(F\) be the edges between vertices \(\partial  D_{\theta}\) and \( D_{\theta}\setminus\gamma\) in \(G\setminus\gamma\) (here we are thinking of $D_\theta$ as the induced subgraph). By \eqref{eq:multiedgerecursion}, we have  
    \begin{equation}
    \label{eq:poly-of-Ggamma2}
       m_{G\setminus\gamma}(x)=\sum_{M \in \mathcal{M}_{G\setminus\gamma},\; E(M) \subset F} (-1)^{|E(M)|}\prod_{e\in \vec{E}(M)}w_e\cdot m_{G\setminus (F\cup \gamma \cup V(M))}(x).
    \end{equation}
    We now have to consider several cases for the matching $M\in\mathcal{M}_{G\setminus\gamma}$.
    \begin{itemize}
        \item \textbf{Case 1}. \(M\) contains vertices from at most \(\left|\partial  S_0\right|-1\) components of \(S_0\).
        Since components of \(S_0\) connect to the rest of \(G\setminus\gamma\) only through edges in $F$, there are at least $$\operatorname{cc}S_0-|\partial  S_0|+1\geq \kappa_\theta +1$$ components of $S_0$ that are isolated components in \(G\setminus(\gamma\cup F \cup V(M))\). Since each of these components is \(\theta\)-critical so have $\theta$ as a root of their matching polynomial and the matching polynomial of a disconnected graph is the product of the matching polynomials of the connected components, we have 
        \[\mathrm{mult}_{G\setminus(\gamma\cup F \cup V(M))}(\theta)\geq \kappa_\theta+1.\]

        \item \textbf{Case 2}. \(M\) contains vertices from \(\left|\partial  S_0\right|\) components of \(S_0\) and hence all vertices in \(\partial  S_0\). In this case, \(G\setminus(\gamma\cup F\cup V(M))\) always contains \(\kappa_\theta\) isolated components that are components of \(S_0\). We now split into two possibilities:
            \begin{itemize}
                \item \textbf{Case 2a}. There exists a component $K$ in $D_\theta$ that is not a component of $S_0$ and is not one of the $Q_i$ such that $M$ does not contain one of its vertices. Then in $G\setminus(\gamma\cup F \cup V(M))$, $K$ is an isolated connected component that is $\theta$-critical. Together with the $\kappa_\theta$ isolated $\theta$-critical components of $S_0$ above, and using the fact that matching polynomials of disconnected graphs are the product of the matching polynomials of the components, we obtain
                \[\mathrm{mult}_{G\setminus (\gamma\cup F\cup V(M))}(\theta)\geq \kappa_\theta+1.\]

                \item \textbf{Case 2b}. $M$ contains a vertex from every component $K$ (if any exist) that are not components of $S_0$ nor are one of the $Q_i$. Note that $M_\mathrm{max}$ is an example of such a matching. 
                In this case, because $M$ contains vertices from $|\partial  S_0|$ components of $S_0$ as well as vertices from all components of $D_\theta$ other than $S_0$ and the $Q_i$, it contains at least as many vertices in $\partial  D_\theta$ as $M_\mathrm{max}$ does, which means it contains all of them (because $M_\mathrm{max}$ does), and they are connected to distinct components in $D_\theta$.
                The connected components of $G\setminus(\gamma\cup F\cup V(M))$ are of the following type:
                    \begin{enumerate}[(i)]
                        \item Connected components of $G\setminus(\partial  D_\theta \cup D_\theta)$, none of which will have $\theta$ as a matching polynomial root by construction.
                        \item Connected components of $D_\theta$ that are not $Q_i$ or in $S_0$ and have a vertex missing (the vertex contained in $M$). These were $\theta$-critical graphs before the removal of the vertex from $M$ and hence $\theta$ had multiplicity one as a matching polynomial root by Theorem \ref{thm:generalisedGallaiEdmonds} which is now reduced to zero as the removed vertex is $\theta$-essential.
                        \item Connected components of $Q_i\setminus\gamma_i$ which have do not have $\theta$ as a matching polynomial root.
                        \item The $\kappa_\theta$ components of $S_0$ that have no vertex contained in $M$ and so remain as $\theta$-critical and so contribute a total of $\kappa_\theta$ to the multiplicity of $\theta$ as a root of the matching polynomial of $G\setminus(\gamma\cup F\cup V(M))$. 
                    \end{enumerate}
                Altogether, we see that
                    \[\mathrm{mult}_{G\setminus (\gamma\cup F\cup V(M))}(\theta)= \kappa_\theta.\]
                Moreover, $|E(M)|$ is the same for all of these types of matchings, it is equal to $|\partial  D_\theta|$.
            \end{itemize}
            So, every term on the right hand side of \eqref{eq:poly-of-Ggamma2} has $\theta$ as a root with multiplicity at least $\kappa_\theta$. The matchings corresponding to the final possibility above give polynomials $m_{G\setminus (F\cup \gamma \cup V(M))}$ whose $\theta$ root multiplicity is equal to $\kappa_\theta$; moreover they all are monic of the same degree and they occur in the expansion of \eqref{eq:poly-of-Ggamma2} with the same sign (since $|E(M)|$ is constant) and there is at least one of them (due to $M_\mathrm{max}$). Note also that the factor $\prod_{e \in \vec E (M)} w_e$ is always a positive real number since whenever \(w_e\) appears in the product, so does \(w_{\bar{e}}=\overline{w_e}\). Thus, when we divide \eqref{eq:poly-of-Ggamma2} through by $(x-\theta)^{\kappa_\theta}$, the limit $x\to\theta$ exists and is non-zero and so the conclusion follows. Here we use the fact that the factor $\prod_{e \in \vec E (M)} w_e$ is always a positive real number since whenever \(w_e\) appears in the product, so does \(w_{\bar{e}}=\overline{w_e}\).
    \end{itemize}
It concludes the proof of the lemma. \end{proof}

Towards proving Theorem \ref{thm:lowerbound}, we introduce the notion of Aomoto sets as in \cite{banks2022point}. Graphs $S$ in \(\mathcal T_\theta\) defined in Definition \ref{def:refinedAomoto} are Aomoto sets in the case when their deficit $\operatorname{cc}S-|\partial  S|>0$.

\begin{definition}\label{def:Aomoto}
For \(\theta\in\mathbb R\), an induced subgraph \(S\) is called a $\theta$-Aomoto set \(\mathcal A_\theta(G)\) if the following conditions hold:
\begin{enumerate}[(i)]
    \item \(S\) is a forest;
    \item \(\theta\) is a zero of the
    matching polynomial of each connected component of \(S\);
    \item
    \(
    \operatorname{cc}S>|\partial  S|.
    \)
\end{enumerate}
The collection of all $\theta$-Aomoto sets is denoted $\mathcal{A}_\theta (G)$.
\end{definition}
By definition, we have 
\[\kappa_\theta \leq \max_{S\in\mathcal A_\theta(G)}
\left(\operatorname{cc}S-|\partial  S|\right).\]

\begin{proof}[Proof of Theorem \ref{thm:lowerbound}]
Set $G = (V,E)$. By Theorem \ref{prop:multicriterion} and Lemma \ref{lem: kappa_lb}, it will suffice for us to demonstrate that $$\max_{S\in\mathcal A_\theta(G)}
\left(\operatorname{cc} S-|\partial  S|\right)\leq |V|\mu_{G_\Gamma}\left(\{\theta\}\right)$$ for any $\theta\in\mathbb{R}$, since 
\(\kappa_\theta\) is no larger than the left hand side. When $\kappa_\theta=0$ the result is immediate, so let us assume to the contrary, and let $S\in\mathcal{A}_\theta$ be a maximising subgraph. Write
\[\mathcal H_\Gamma-\theta I=\begin{pmatrix}
A & B \\
B^* & C
\end{pmatrix}\in M_{V}(\mathcal N\Gamma),\]
where \(A=\mathcal H_\Gamma-\theta I|_{S\times S}\). Since \(S\) is a forest with \(\operatorname{cc} S\) components and each has \(\theta\) as an eigenvalue, \[\dim_\Gamma \mathrm{ker} A \geq \operatorname{cc} S.\]
Also note that \(B\) is zero outside of the rows and columns corresponding to vertices in \(\partial  S\), and so \[\mathrm{rank}_{\Gamma}\,B \leq |\partial  S|.\]

We claim 
\[\dim_\Gamma\mathrm{ker}\,\begin{pmatrix}
A & B \\
B^* & C
\end{pmatrix}\geq \dim_\Gamma \operatorname{ker} A-\mathrm{rank}_\Gamma\,B.\]
From von Neumann nullity \eqref{eq:vonNeumannnull}, this inequality is equivalent to 
\[\mathrm{rank}_\Gamma A+ \mathrm{rank}_\Gamma B +|V|-|S|\geq  \mathrm{rank}_\Gamma \begin{pmatrix}
A & B \\
B^* & C
\end{pmatrix}.\]
Thanks to \eqref{eq:rankineq}, we have 
\[\mathrm{rank}_\Gamma A+ \mathrm{rank}_\Gamma B\geq  \mathrm{rank}_\Gamma \begin{pmatrix}
A & B 
\end{pmatrix},\]
\[\mathrm{rank}_\Gamma \begin{pmatrix}
A & B
\end{pmatrix}+\mathrm{rank}_\Gamma \begin{pmatrix}
B^* & C
\end{pmatrix}\geq \mathrm{rank}_\Gamma \begin{pmatrix}
A & B \\
B^* & C
\end{pmatrix}.\]
The claim then follows from
\[|V|-|S|\geq \mathrm{rank}_\Gamma \begin{pmatrix}
B^* & C
\end{pmatrix}\]
as a consequence of \eqref{eq:vonNeumannnull}. Using this rank inequality, we have 
\(\dim \operatorname{ker} (\mathcal H_\Gamma-\theta I )\geq \operatorname{cc} S- |\partial  S|.\)
\end{proof}

\section{The upper bound}

\label{sec:monotone}

In this section, we will prove the desired upper bound for Theorem \ref{conj}.

\begin{theorem}\label{thm:upperbound}
There exists \(c,\alpha >0\) that depends on \(G_\Gamma\), such that for all closed intervals $I$ of length \(|I| < 1\),
\[
\mu_{G_\Gamma}(I)
\leq \sum_{\theta \in I}
\mu_{G^{\mathrm{ab}}}(\{\theta\})+ \frac{c}{ \log^\alpha (1/|I|)}.
\]
The sum in the right hand side makes sense since \(\mu_{G^{\mathrm{ab}}}(\{\theta\})\) is zero outside of a finite set.
\end{theorem}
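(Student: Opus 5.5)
The plan is to prove Theorem \ref{thm:upperbound} by induction on $|E(G)|$, always under the standing assumption of Theorem \ref{conj} that $\pi_\Gamma$ factorises through $G^\ab$. Lemma \ref{lemma:Hughesfree} guarantees that this assumption passes to connected subgraphs. By subadditivity and a covering argument it suffices to treat an interval $I=[\theta-\epsilon,\theta+\epsilon]$ and to show
\[
\mu_{G_\Gamma}(I)\le \mu_{G^\ab}(\{\theta\})+\frac{c}{\log^\alpha(1/\epsilon)}.
\]
For $\epsilon$ below the minimal gap between the finitely many atoms of $\mu_{G^\ab}$, at most one atom lies in $I$, so the sum in the statement reduces to a single term. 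I would handle a general $I$ by centring it and using that only atoms within distance $|I|$ matter; these are finitely many roots of the polynomials $m_{G\setminus\gamma}$.

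\textbf{Base case.} If $G$ is a forest, every component of $G_\Gamma$ is an isomorphic copy of $G$. Lemma \ref{lm;isodeterminedos} then gives $\mu_{G_\Gamma}=\mu_{G^\ab}=\mu_{G^\uni}$, which is purely atomic, so there is no error term.

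\textbf{Inductive step.} Let $k=\min_{\gamma\in\mathcal C(G)}\operatorname{mult}_\theta(G\setminus\gamma)$, so that $|V|\mu_{G^\ab}(\{\theta\})=k$ by Theorem \ref{prop:multicriterion}.

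\emph{Gap case.} Suppose the minimum over nonempty $\gamma$ is positive while $\operatorname{mult}_\theta(G)=0$. Then I would invoke the gap theorem announced in the introduction (Theorem \ref{thm:surprisethm}): $\theta\notin\operatorname{spec}(\mathcal H_\Gamma)$, so $\mu_{G_\Gamma}(I)=0$ for small $\epsilon$. Compactness over $\theta$ in the finite exceptional set gives a uniform threshold.

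\emph{Main case.} Otherwise I choose a nonempty $\gamma$ (a cycle, or a union of disjoint cycles) with $\operatorname{mult}_\theta(G\setminus\gamma)=k$. When $\operatorname{mult}_\theta(G)=0$ one can use Lemma \ref{lem: path lemma} and Corollary \ref{cor:cycleincritical} to locate such a cycle. When $\operatorname{mult}_\theta(G)=k$, one can pick a cycle inside the non-acyclic part of $D_\theta$ as in the proof of Lemma \ref{lem: kappa_lb}. The key estimate I aim for is
\[
\operatorname{rank}_\Gamma \mathbf 1_I(\mathcal H_\Gamma)\;\le\;\operatorname{rank}_\Gamma \mathbf 1_{I'}(\mathcal H_\Gamma|_{G\setminus\gamma})+\frac{c}{\log^\alpha(1/\epsilon)},
\]
with $I'$ a slight enlargement of $I$. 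The restricted operator is the lift to the covers of the components of $G\setminus\gamma$, which have fewer edges and still factorise through their maximal abelian covers by Lemma \ref{lemma:Hughesfree}. The inductive hypothesis then bounds the right-hand side by $|V(G\setminus\gamma)|\,\mu_{(G\setminus\gamma)^\ab}(\{\theta\})$ plus a log error. Theorem \ref{prop:multicriterion} applied to $G\setminus\gamma$ bounds this by $\operatorname{mult}_\theta(G\setminus\gamma)=k$, which closes the induction after renormalising by $|V|$.

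\textbf{Main obstacle.} The hard step is the displayed rank estimate. Plain Cauchy interlacing (Lemma \ref{lemma:Cauchy interlacing}) only gives an additive error of $|V(\gamma)|$, far too large. Here is where factorisation through $G^\ab$ is used: the closed walk $\gamma$ maps to a nontrivial class in $H_1(G;\Z)$, so there is a homomorphism $\chi:\Gamma\to\Z$ nonzero on the voltage of $\gamma$. Hence the lift of $\gamma$ consists of bi-infinite lines ordered by $\chi$. I would run the deterministic monotone labelling argument of \cite{Bo.Se.Vi2017,bordenave2026logarithmic} along these lines. An approximate eigenvector $f\in\operatorname{im}\mathbf 1_I(\mathcal H_\Gamma)$, restricted to the $\chi$-ordered sites on the lifted cycle, is determined, up to the neighbouring values in $G\setminus\gamma$, by a transfer recursion along the line. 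Choosing a labelling of the sites on lifted $\gamma$ that is monotone in $\chi$, and cutting at scale $L\approx\log(1/\epsilon)$, the forced values along the line show that the $\gamma$-coordinates contribute only an $O(1/L^{\alpha})$ fraction of von Neumann dimension rather than $|V(\gamma)|$. I would first attempt this with $\alpha=1$ on a single cycle and then iterate over the disjoint cycles of $\gamma$, accepting a worse $\alpha$ after the recursion. Controlling the growth of the transfer matrices, which is polynomial in $1/\epsilon$ over length $L$, is where I expect the real difficulty to lie.
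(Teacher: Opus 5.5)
\textbf{Gap 1: a case is missing.} In your ``main case'' you assume that, outside the $\theta$-phobic situation, there is a \emph{nonempty} $\gamma\in\mathcal C(G)$ with $\operatorname{mult}_\theta(G\setminus\gamma)=k$. For $\operatorname{mult}_\theta(G)=k$ you propose cutting a cycle in a non-acyclic component of $D_\theta$ ``as in the proof of Lemma~\ref{lem: kappa_lb}''. This fails precisely when $\varnothing$ is the unique minimiser and $\operatorname{mult}_\theta(G)=k>0$, which is the set $S_2$ in the paper. In Lemma~\ref{lem: kappa_lb} cycles are cut only in cyclic critical components left unmatched by $M_{\max}$. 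There are $\operatorname{mult}_\theta(G)-\kappa_\theta$ of these, which is $0$ here since $k\le\kappa_\theta\le\operatorname{mult}_\theta(G)=k$.

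Concretely, take the adjacency operator at $\theta=0$ and let $G$ be a triangle with two pendant leaves at one vertex. Then $m_G(x)=x^5-5x^3+2x$, so $\operatorname{mult}_0(G)=1$. Deleting the triangle, the only nonempty element of $\mathcal C(G)$, leaves two isolated vertices with $\operatorname{mult}_0=2$. Your labelling step would then give only $\mu_{G_\Gamma}(I)\lesssim 2/|V|$ instead of $1/|V|$.

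The paper treats this case (Estimate II) with the tool you discarded. Using Lemma~\ref{lm:essenexistence} repeatedly, pick $p=\operatorname{mult}_\theta(G)$ vertices with $m_{G\setminus\{v_1,\dots,v_p\}}(\theta)\neq0$, and apply Lemma~\ref{lemma:Cauchy interlacing}. The additive loss $p$ is now exactly $|V|\mu_{G^{\mathrm{ab}}}(\{\theta\})$. The remaining term is $O(\log^{-\alpha}(1/\epsilon))$ by induction on the components of $G\setminus\{v_1,\dots,v_p\}$: they have fewer edges, factorisation passes to them by Lemma~\ref{lemma:Hughesfree}, and their maximal abelian covers have no atom near $\theta$ by Theorem~\ref{prop:multicriterion} with $\gamma=\varnothing$.

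\textbf{Gap 2: the key estimate.} The estimate you leave open does not involve transfer matrices, and your sketch misses the two ingredients that make it work.

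First, the height. The paper uses $h^\gamma$, built from the cocycle $\phi^\gamma$ ($\pm1$ on the oriented edges of $\gamma$, $0$ on all other edges). It descends to $G_\Gamma$ because it induces a homomorphism $\pi_1(G)\to\Z$ vanishing on $[\pi_1(G),\pi_1(G)]\supseteq\pi_1(G_\Gamma)$. A homomorphism $\chi:\Gamma\to\Z$ that is merely nonzero on the voltage of $\gamma$ is not enough. You need a label that is constant across every edge off $\gamma$, so that all of $G\setminus\gamma$ is level with no edges between different levels, and that increases along every cycle of $\gamma$ simultaneously. This also removes any need to iterate over cycles.

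Second, periodicity. This height is not $\Gamma$-periodic, so Proposition~\ref{th:monof}, which needs a periodic labelling in order to work with $\dim_\Gamma$, does not apply directly. The paper passes to the $\Gamma\times\Z_N$ voltage graph $G^N_\Gamma$. This is $N$ disjoint copies of $G_\Gamma$, so its spectral data are those of $G_\Gamma$ by Lemma~\ref{lm;isodeterminedos}, and it is labelled periodically by the $\Z_N$-coordinate. The vertices of $\gamma$ are then prodigy except at labels $0,1$, which yields
\[
\mu_{G_\Gamma}(I)\le \frac{2}{N|V|}+\frac{|V(G\setminus\gamma)|}{|V|}\,\mu_{(G\setminus\gamma)_\Gamma}(I'),\qquad |I'|=|I|^{1/(6N\log N)}.
\]
So $I'$ is a polynomial enlargement of $I$, not a slight one. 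The choice $N\asymp\log^{1/2}(1/|I|)$ balances the two errors, and all the $\epsilon$-dependence is absorbed by Proposition~\ref{th:monof}.
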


We will approach this result in the following way. First, we will compare the spectral measures at atoms using the formula proven in Theorem \ref{prop:multicriterion}. The analysis will split into two cases, whether we can rule out the presence of an atom through the failure of it being a root of the matching polynomial after deletion by a non-empty $2$-regular subgraph, or whether we can only rule it out by seeing that it fails to be a root of the matching polynomial of the graph itself. In the first case, we will use monotone labelling techniques developed in \cite{bordenave2026logarithmic}, and in the latter, we will utilise the matching theory from Section \ref{sec:comb}.

\subsection{Gap in the spectrum for $\theta$-phobic graphs}

To begin, we will show that if \(\theta \in \mathbb R\) is not an \(\ell^2\) eigenvalue of \(G^\ab\), then it is not an \(\ell^2\) eigenvalue of \(G_\Gamma\). Recall that failure to be an $\ell^2$ eigenvalue of $G^\ab$ is equivalent to failure of the graph to be $\mathrm{LMST}(\theta)$. That is, there exists (a possibly empty) \(\gamma\in \mathcal C(G)\) such that \(m_{G\setminus\gamma}(\theta)\neq 0.\) We split into the following possibilities:
\begin{itemize}
    \item \textbf{Case} 1: There exists a non-empty \(\gamma\in \mathcal C(G)\) such that \(m_{G\setminus\gamma}(\theta)\neq 0\);
    \item \textbf{Case} 2: For all non-empty \(\gamma\in \mathcal C(G)\), \(m_{G\setminus\gamma}(\theta)= 0\) while \(m_{G}(\theta)\neq  0\).
\end{itemize}
The latter case motivates the following definition.

\begin{definition}\label{def:hard}
    A graph \(G\) that is not a forest is called \textbf{\(\theta\)-phobic} if \(m_{G}(\theta)\neq 0\) and 
    \[m_{G\setminus\gamma}(\theta)=0 \qquad \forall  \gamma \neq \varnothing, \gamma \in \mathcal C(G),\]
    or equivalently, 
    \[\phi_G(\theta,z)\equiv \mathrm{const.}\neq 0 \qquad \forall z\in \T_G.\]
\end{definition}

Note that the equivalence with $\phi_G(\theta,z)$ being a non-zero constant for all $z\in\T_G$ follows {from the identity \eqref{eq:detmatch} provided in \cite{LiMageeSabriThomas}:} $\phi_G(\theta,z)$ expands as a summation of monomials in the $z_e$ and their inverses whose coefficients are $m_{G\setminus\gamma}(\theta)$ (up to a sign) and so the only surviving term is $m_G(\theta)\neq 0$ which is the coefficient of the monomial that is 1.

The property of being $\theta$-phobic has surprisingly strict spectral consequences for a graph. The goal of this subsection is to prove the following theorem which asserts that if $G$ is $\theta$-phobic then $\theta$ is not in the spectrum of the lifted Schrödinger operator to $G_\Gamma$ but is in the convex hull of its spectrum.

\begin{theorem}\label{thm:surprisethm}
    If \(G\) is \(\theta\)-phobic, then 
    \(\theta \notin \mathrm{Spec}\,G_\Gamma\) for any normal cover $G_\Gamma$ of \(G\). In fact, \(\theta\) is a gap in the spectrum of $G_\Gamma$ in the sense that there exist $\theta_1 <\theta<\theta_2,$  with $\theta_1,\theta_2\in\mathrm{Spec}\,G_\Gamma$. In particular, the spectrum of any normal cover of \(G\) is disconnected.
\end{theorem}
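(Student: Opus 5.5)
The plan is to prove that $\mathcal H_\Gamma-\theta I$ is invertible in $M_V(\mathbb C\Gamma)$. I would do this by writing down an explicit, finitely supported inverse whose coefficients are ratios of matching polynomials. The advantage is that the construction is purely algebraic and uses no Floquet theory, so it works uniformly for every deck group $\Gamma$. For $\Gamma$ abelian the statement is already clear: $\phi_G(\theta,z)\equiv c\neq 0$ means $\mathcal H(z)-\theta I$ is invertible for every $z\in\T_G$, uniformly. This suggests that $(\mathcal H(z)-\theta I)^{-1}=\operatorname{adj}(\theta I-\mathcal H(z))/(\pm c)$ is a matrix of Laurent polynomials, and that its noncommutative analogue should exist.

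For $u,v\in V$, I propose the candidate
\[
R_{uv}:=\frac{1}{m_G(\theta)}\sum_{P\in\mathcal P_{u,v}} w_P\, m_{G\setminus P}(\theta)\,\rho_\Gamma(g_P),\qquad R_{uu}:=\frac{m_{G\setminus\{u\}}(\theta)}{m_G(\theta)},
\]
with the sign fixed by the convention for $(\mathcal H-\theta)^{-1}$. Here the sum runs over self-avoiding oriented paths $P$ from $u$ to $v$, $w_P$ is the product of the edge weights along $P$, and $g_P$ is the product of the voltages along $P$. This is Godsil's path-tree formula for the resolvent, and it is exact for trees. The aim is to verify $(\mathcal H_\Gamma-\theta I)R=I$ in $M_V(\mathbb C\Gamma)$; since $\mathcal H_\Gamma$ is self-adjoint, a one-sided inverse suffices.

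The entry $(u,v)$ of the product is computed by expanding $\sum_w(\mathcal H_\Gamma-\theta)_{uw}R_{wv}$, and there are three kinds of terms.
\begin{itemize}
\item If $u$ is not on the path from a neighbour $w$ to $v$, then prepending the edge $uw$ gives a path from $u$ to $v$, and the group elements concatenate correctly.
\item The diagonal terms are handled by the vertex recursion \eqref{eq:vertexresursion}: applied to $G\setminus P$ at $u$, it collapses these contributions to $\delta_{uv}\, m_G(\theta)/m_G(\theta)$.
\item If the path from $w$ to $v$ already passes through $u$, prepending $uw$ closes a cycle $\gamma$ through $u$. These terms carry the genuinely noncommutative group element $g_\gamma g_{P'}$, where $P'$ is the remaining path, and their coefficient is $m_{G\setminus(\gamma\cup P')}(\theta)$ (up to weights).
\end{itemize}

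The main obstacle is showing that the closing-cycle terms vanish, i.e.\ that $m_{G\setminus(\gamma\cup P')}(\theta)=0$ whenever $\gamma$ is a cycle and $P'$ is a path disjoint from it. The hypothesis gives only $m_{G\setminus\gamma}(\theta)=0$ for every nonempty $\gamma\in\mathcal C(G)$. I would first apply the path identity \eqref{eq:pathidentity} inside $G\setminus\gamma$, together with Lemma~\ref{lem: path lemma} applied to $G\setminus\gamma$. The path lemma says that either some vertex deletion of $G\setminus\gamma$ kills $\theta$, or a further cycle $\gamma'$ gives $m_{G\setminus(\gamma\cup\gamma')}(\theta)\neq0$. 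The second alternative contradicts $\theta$-phobicity, since $\gamma\cup\gamma'\in\mathcal C(G)$; if $\gamma$ and $\gamma'$ are not disjoint, I would repair this using Corollary~\ref{cor:cycleincritical}. The first alternative feeds an induction on $|P'|$. If these cancellations cannot be organised term by term, the fallback is to group them by $\Gamma$-element and use the cycle expansion \eqref{eq:detmatch} of the constant $\phi_G(\theta,\cdot)$ to argue that the coefficients cancel in aggregate. With the identity in hand, $R$ is a bounded operator (a finite sum), so $\theta\notin\operatorname{Spec}\mathcal H_\Gamma$ for every $\Gamma$.

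For the gap statement, I would show that $\operatorname{Spec}\mathcal H_\Gamma$ meets both $(-\infty,\theta)$ and $(\theta,\infty)$. Suppose instead that $\mathcal H_\Gamma>\theta$. Since $G$ is not a forest, it contains a nonempty $\gamma$ with $m_{G\setminus\gamma}(\theta)=0$, so $\theta$ is an eigenvalue of the finite operator on $G\setminus\gamma$. Cauchy interlacing (Lemma~\ref{lemma:Cauchy interlacing}) applied to $G\setminus\gamma$ only shows that $\mathcal H_\Gamma$ has spectrum within distance about $|\gamma|$ of $\theta$, which is not enough. I would therefore use the matching-polynomial root bounds instead: all roots of $m_{G\setminus\gamma}$ lie in $[\lambda_{\min}(m_G),\lambda_{\max}(m_G)]$. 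These extreme roots are the extreme eigenvalues of the path tree and hence are controlled by $\operatorname{Spec}\mathcal H^{\uni}$; I would then compare the bottom and top of the spectrum of $G_\Gamma$ with those of $G^{\uni}$, which is the step I am least sure holds for general complex weights. This places $\theta$ strictly inside the convex hull of $\operatorname{Spec}\mathcal H_\Gamma$; strictness holds because $\theta\notin\operatorname{Spec}\mathcal H_\Gamma$ by the first part. Consequently $\operatorname{Spec}\mathcal H_\Gamma$ is disconnected.
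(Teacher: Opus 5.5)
\textbf{Gap in the inverse.} Your explicit inverse fails at the level of the candidate itself, not only in its verification. The matching-polynomial path formula is not the inverse of $\mathcal H_\Gamma-\theta I$, and the vanishing of the closing-cycle coefficients is false.

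Take $G$ to be a triangle $a,b,c$ with a pendant vertex $d$ attached to $a$, with the adjacency operator and $\theta=0$. Then $m_G(0)=1$, $\mathcal C(G)=\{\varnothing,\text{triangle}\}$ and $m_{\{d\}}(0)=0$, so $G$ is $0$-phobic. Put voltage $t$ on $c\to a$; this is the universal cover, with $\Gamma=\Z=\langle t\rangle$. Your formula gives:
\begin{itemize}
\item $R_{dd}=m_{K_3}(0)=0$;
\item $R_{bd}=\rho_\Gamma(t)$, since the path $b\,a\,d$ has coefficient $m_{\{c\}}(0)=0$ and the path $b\,c\,a\,d$ has coefficient $m_\varnothing=1$;
\item $R_{cd}=1$, by the same reasoning.
\end{itemize}
Hence
\[
\bigl((\mathcal H_\Gamma-\theta I)R\bigr)_{ad}=R_{bd}+\rho_\Gamma(t^{-1})R_{cd}+R_{dd}=\rho_\Gamma(t)+\rho_\Gamma(t^{-1})\neq 0 .
\]
These are exactly your closing-cycle terms, and their coefficient is $m_{G\setminus(\gamma\cup P')}(0)=m_\varnothing(0)=1$. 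They carry the distinct group elements $t^{\pm1}$, so no regrouping by $\Gamma$-element can cancel them. For the trivial cover the same entry equals $2$.

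The true inverse is built from characteristic polynomials, not matching polynomials. For example, on abelian covers the $(d,d)$ entry of $(\theta-\mathcal H(z))^{-1}$ is $\phi_{K_3}(0,z)/\phi_G(0,z)=-(z+\bar z)$. This records cycles inside $G\setminus P$, which no matching-polynomial expression sees; your formula is just the path-tree resolvent. For non-abelian $\Gamma$ there is also no evident noncommutative adjugate.

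This is why the paper never writes an inverse down. It argues as follows:
\begin{itemize}
\item It takes a pending subgraph $K$ satisfying $\mathrm{LMST}(\theta)$ (Proposition~\ref{prop:LMSTsubgraph}).
\item It gets invertibility of the blocks on $L\setminus\{v\}$ and $K\setminus\{u\}$ by induction, since their $\phi(\theta,\cdot)$ is a non-zero constant.
\item It uses $\mu_{K_\Gamma}(\{\theta\})\ge 1/|V(K)|$ and faithfulness of the trace to force the Schur complement $M_2-C_2^*B_2^{-1}C_2$ to vanish.
\item It concludes by block Gaussian elimination across the bridge.
\end{itemize}

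\textbf{Gap in the spectral-gap statement.} First, $m_{G\setminus\gamma}(\theta)=0$ does not make $\theta$ an eigenvalue of the finite operator on $G\setminus\gamma$, since you are conflating the matching and characteristic polynomials. In any case, an eigenvalue of the finite graph says nothing directly about $(G\setminus\gamma)_\Gamma$.

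Second, your path-tree route needs the convex hull of $\mathrm{Spec}\,\mathcal H^{\uni}$ to sit inside that of $\mathrm{Spec}\,\mathcal H_\Gamma$. This is false in general, even for real weights. Take a triangle with weights $1,1,-1$ and the trivial cover: the spectrum is $\{-2,1,1\}$, whereas the universal cover is a bi-infinite path with spectrum $[-2,2]$.

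The missing idea is that $G\setminus\gamma$ itself satisfies $\mathrm{LMST}(\theta)$. Indeed, every $\gamma'\in\mathcal C(G\setminus\gamma)$ yields a non-empty $\gamma\sqcup\gamma'\in\mathcal C(G)$, so $m_{(G\setminus\gamma)\setminus\gamma'}(\theta)=0$ by $\theta$-phobicity. Theorem~\ref{thm:lowerbound} then makes $\theta$ an $\ell^2$-eigenvalue of $(G\setminus\gamma)_\Gamma$. Since this is an induced subgraph of $G_\Gamma$, min--max applied to the compression gives $\min\mathrm{Spec}\,\mathcal H_\Gamma\le\theta\le\max\mathrm{Spec}\,\mathcal H_\Gamma$. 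Strict inequality then follows from the first part, as you intended.
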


This theorem is one of the two main ingredients in the forthcoming proof of Theorem \ref{thm:upperbound}. It is however of independent interest. Indeed, motivated by solid-states physics applications, gaps in the spectrum of Schrödinger operators have attracted some attention, see notably \cite{zbMATH07207199,zbMATH07750987}.

\subsubsection{A special case: $0$-phobic graphs}
The case of $0$-phobic graphs has a particularly illuminating geometric interpretation which we separate out from general $\theta$. The separation is however unnecessary towards proving Theorem \ref{thm:surprisethm}, and so this can be skipped if it is not of interest to the reader.
In this subsection, we consider the case where $\mathcal{H}$ is the adjacency operator on $G$ and prove the following claim.

\begin{proposition} 
\label{prop:0-hard}
    Suppose that \(G\) has no self-loops and that $\mathcal H$. If \(G\) is \(0\)-phobic for the adjacency operator, then \(0\) is not in the spectrum of the adjacency operator of any (normal) cover of \(G\).
\end{proposition}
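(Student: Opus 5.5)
The plan is to translate $0$-phobicity into a statement about perfect matchings, and then build an explicit bounded inverse of the lifted adjacency operator on any cover.

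For the adjacency operator with no self-loops, $m_G(0)=(-1)^{|V|/2}\,\#\{\text{perfect matchings of }G\}$. So $m_G(0)\neq 0$ means $G$ has a perfect matching $M$. The condition $m_{G\setminus\gamma}(0)=0$ for every nonempty $\gamma\in\mathcal C(G)$ says that no nonempty $2$-regular subgraph has a perfectly matchable complement.

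\textbf{Step 1: no alternating cycles.} If $C$ were an $M$-alternating (even) cycle, then $M\setminus E(C)$ would be a perfect matching of $G\setminus C$, a contradiction. Hence $M$ has no alternating cycles, and therefore $M$ is the unique perfect matching, since the symmetric difference of two perfect matchings is a union of alternating cycles. I also want the stronger statement that $G$ admits no closed $M$-alternating walk that starts and ends with a matching edge. I would argue this by extracting from such a walk either an alternating even cycle, or a $2$-regular subgraph $\gamma$ consisting of odd cycles whose complement is still matched by $M$. The latter would use the hypothesis for $2$-regular subgraphs that are not single even cycles, and this is where the full $\mathcal C(G)$ condition enters.

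\textbf{Step 2: bounded alternating walks.} Since $G$ is finite and has no closed alternating walks, every $M$-alternating walk in $G$ (beginning and ending with an $M$-edge, never traversing an $M$-edge twice in a row) has length at most a constant $L=L(G)$. Now lift $M$ to the perfect matching $\widetilde M=\pi^{-1}(M)$ of any cover $G_\Gamma$. Alternating walks in $G_\Gamma$ project to alternating walks in $G$, so they are uniformly bounded by $L$ as well.

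\textbf{Step 3: explicit inverse.} Define the operator $B$ on $\ell^2(V(G_\Gamma))$ by
\[
B(x,y)=\sum_{P}(-1)^{(|P|-1)/2},
\]
where the sum runs over $\widetilde M$-alternating paths $P$ from $x$ to $y$ that start and end with an $\widetilde M$-edge. This is the analogue of Godsil's inverse formula for graphs with a unique perfect matching. By Step 2, $B$ has finite range and uniformly bounded entries, and it commutes with the deck action, so $B\in M_V(\mathcal N\Gamma)$ is bounded.

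I would then verify $AB=I$ directly, where $A$ is the lifted adjacency operator. Expanding $(AB)(x,y)=\sum_{z\sim x}B(z,y)$, the term in which $xz$ is the $\widetilde M$-edge at $x$ produces either $\delta_{xy}$ or the paths beginning at $x$. The terms with non-matching edges $xz$ cancel these paths in pairs, by the sign rule. Taking adjoints gives $BA=I$, since $A$ is self-adjoint. Hence $0\notin\operatorname{Spec}$, and in fact $\operatorname{dist}(0,\operatorname{Spec})\ge\|B\|^{-1}$ uniformly over all covers.

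\textbf{Main obstacle.} I expect the hard part to be the cancellation argument in Step 3, together with Step 1, in the non-bipartite case. There, alternating walks can revisit vertices through odd cycles (blossoms), so "path" must be replaced by a carefully chosen class of walks for which the cancellation still works.

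If this becomes intractable, my fallback is an induction using Kotzig's theorem, which says a graph with a unique perfect matching has a bridge $e=uv\in M$. The lift of $e$ consists of bridges in $G_\Gamma$, so $A_\Gamma$ decomposes over a tree of copies of covers of the two sides $K$ and $L$. A Schur-complement argument (the operator version of Lemma~\ref{lemma:turnonoff}), combined with the inductive invertibility of the lifted operators on $K\setminus\{u\}$ and $L\setminus\{v\}$, which inherit the hypotheses, would then give a bounded inverse.
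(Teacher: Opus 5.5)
Your plan is essentially the paper's argument. With $A=M+Q$ and $M^2=I$ one has $MA=I+MQ$, so everything reduces to nilpotency of $MQ$, whose powers count alternating walks. That nilpotency lifts to $M_\Gamma Q_\Gamma$, and your kernel $B$ ought to be exactly the finite Neumann series $\sum_{r\ge0}(-1)^r(MQ)^rM=A^{-1}$.

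As written, however, Step 3 is false for non-bipartite graphs. Take the triangle $bcd$ with a pendant edge $ab$ and $M=\{ab,cd\}$. This graph is $0$-phobic: the only nonempty $\gamma$ is the triangle, and removing it leaves the isolated vertex $a$. There is no alternating path from $a$ to $a$, yet $A^{-1}(a,a)=\det A_{\{b,c,d\}}/\det A=2$. That value is the signed count of the two alternating walks $a\,b\,c\,d\,b\,a$ and $a\,b\,d\,c\,b\,a$. If you instead sum over all alternating \emph{walks} that begin and end with an $M$-edge, you get $B=\sum_r(-1)^r(MQ)^rM$. Then $AB=M(I+MQ)(I+MQ)^{-1}M=I$ is immediate, and the cancellation argument you flag as the main obstacle is not needed at all.

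The same example refutes the stronger statement in Step 1: $a\,b\,c\,d\,b\,a$ is a closed alternating walk that starts and ends with a matching edge, in a $0$-phobic graph. What you actually need is only that no alternating walk can be continued forever, i.e.\ that $MQ$ is nilpotent. You are right that uniqueness of $M$ alone does not give this, and the paper's own justification (that a unique perfect matching forces alternating walks to be paths) is inaccurate on exactly this point. Two triangles joined by a matching edge have a unique perfect matching but an infinite alternating walk. With fluxes $\alpha_1,\alpha_2$ through the triangles, $\det A(z)=4\cos\alpha_1\cos\alpha_2-1$, which vanishes at $\alpha_1=\alpha_2=\pi/3$.

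Your extraction argument, though, is only sketched, and its output is misdescribed. When two blossoms are joined by an alternating path of more than one edge, the complement of the two odd cycles is perfectly matchable only after rematching that path by its non-matching edges, not by $M$. Nested blossoms make a direct combinatorial proof delicate.

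A short rigorous route is to put weight $t>0$ on the non-matching edges.
\begin{itemize}
\item With zero potential, $m_H(0)\neq0$ if and only if $H$ has a perfect matching, so $G$ stays $0$-phobic with unique perfect matching $M$.
\item Then \eqref{eq:detmatch} at $z=\mathbf 1$ gives $\det(M+tQ)=m_G(0)=(-1)^{|V|/2}=\det M$.
\item Hence $\det(I+tMQ)=\det M\cdot\det(M+tQ)=1$ for all $t$, so $MQ$ is nilpotent.
\item Since the entries of $MQ$ are nonnegative walk counts, this bounds alternating walks combinatorially and therefore passes to every cover.
\end{itemize}

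Your fallback has a similar hole. The two-triangle graph has invertible lifts on $K\setminus\{u\}$ and $L\setminus\{v\}$, yet it is singular on its maximal abelian cover. So the Schur-complement step must use $0$-phobicity of $G$ itself, not just invertibility of the two sides. In the paper's proof of Theorem \ref{thm:surprisethm}, this comes from choosing the pending side to satisfy $\mathrm{LMST}$, which forces its Schur complement at the attaching vertex to vanish.
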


\begin{proof}
Since \(G\) is \(0\)-phobic, it has a perfect matching. We show that the perfect matching is unique. Suppose otherwise, and let \(M\) and \(M'\) be two different perfect matchings. Their \textbf{symmetric difference}, i.e., the graph that contains the unshared edges of $M$ and $M'$, is a degree-\(2\) subgraph \(\gamma\) of \(G\). Moreover, the common edges of \(M\) and \(M'\) provides a perfect matching of \(G\setminus\gamma\). Therefore, we have \(m_{G\setminus\gamma}(0)\neq 0\), which contradicts the fact that \(G\) is \(0\)-phobic.

Since \(G\) has a unique perfect matching, it cannot contain an alternating cycle with respect to this matching. Indeed otherwise, the edges in the cycle that are not from the matching, together with edges in the perfect matching that are not transversed by the cycle form a new perfect matching.

Abusing the notation, we write the adjacency matrix of the unique matching perfect matching (as a subgraph of \(G\)) by \(M\). Note that \(M^2=I\). Define \(Q:=A-M\). For any \(r\in \mathbb{N}\), the matrix elements 
\[((MQ)^r)_{u,v}\qquad u,v\in V(G)\]
counts the number of alternating walks from \(u\) to \(v\) of length \(2r\), by first starting through an edge in the matching. Since \(G\)
has a unique perfect matching and no self-loops, all alternating walks have to be alternating paths, that is, no edge is repeated twice. However, since \(G\) is finite and contains no self-loops, there exists \(l\in \mathbb{N}\) such that all paths in \(G\) have length at most \(2l\). Thus \((MQ)^l=0\). Since 
\[MA=I+MQ\]
and \(MQ\) is nilpotent, \(MA\) and thus \(A\) is invertible.

Similarly, we view \(A_\Gamma\) as a matrix 
in \(M_{V(G)}(\mathcal N \Gamma)\). Let \(M_\Gamma\) represent the part of \(A_\Gamma\) that corresponds to the perfect matching, then 
\[M^2_\Gamma=\mathrm{id}.\]
Write \(A_\Gamma=M_{\Gamma}+Q_{\Gamma}\). Again by arguing using the unique perfect matching, \(M_{\Gamma}\,Q_{\Gamma}\) is nilpotent, and \(A_{\Gamma}\) is invertible. Therefore, \(0\notin \mathrm{Spec}\, A_{\Gamma}\). 
 The same proof actually works for any cover \(\tilde{G}\) if one replaces \(M_{V(G)}(\mathcal N \Gamma)\) by \(M_{V(G)}(B(\ell^2(F)))\), where \(F\) is the fibre of the covering map \(\pi:\tilde{G}\rightarrow G \).
\end{proof}

\begin{remark}
    The proof of Proposition \ref{prop:0-hard} actually shows that the existence of a unique perfect matching and no self-loops in $G$ leads to $0$ not being in the spectrum of any (normal) cover. 
\end{remark}

\subsubsection{Proof of Theorem \ref{thm:surprisethm}}

The proof of this theorem is based on the following structural decomposition of \(\theta\)-phobic graphs. For the familiar reader, it plays the role of leaf blocks in the bridge-block tree of odd degree regular graphs used in \cite{LiMageeSabriThomas}. We call a subgraph $H$ of $G$ a \textbf{pending} subgraph if $H$ is connected to $G$ by a bridge edge.

\begin{proposition}\label{prop:LMSTsubgraph}
    If \(G\) is \(\theta\)-phobic, then it 
    contains a pending subgraph satisfying \(\mathrm{LMST}(\theta)\).
\end{proposition}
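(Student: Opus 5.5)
The plan is to work with the bridge structure of $G$, using Lemma \ref{lemma:turnonoff} to turn the algebraic condition of Definition \ref{def:hard} into a statement about the pieces. The hypothesis will be used in the form $\phi_G(\theta,z)\equiv m_G(\theta)\neq 0$ on $\T_G$. The conclusion $\mathrm{LMST}(\theta)$ for a subgraph $K$ will be used in the form $\phi_K(\theta,\cdot)\equiv 0$, which by \eqref{eq:detmatch} and the orthogonality of the monomials $w_{\vec\gamma}$ is equivalent to $m_{K\setminus\gamma}(\theta)=0$ for all $\gamma\in\mathcal C(K)$.

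\textbf{Step 1 (a bridge exists).} Since $G$ is not a forest, fix an edge $e=uv$ lying on a cycle. Expand $\phi_G$ in the variable $z_e$. The terms with no $z_e$ come from $\phi_{G\setminus e}-|w_e|^2\phi_{G\setminus\{u,v\}}$. The coefficients of $z_e^{\pm1}$ are sums over oriented $2$-regular subgraphs through $e$, i.e. over paths $P$ from $u$ to $v$ in $G\setminus e$ closed up by $e$. Constancy of $\phi_G(\theta,\cdot)$ forces all of these coefficients to vanish at $\theta$. At $z$ equal to the identity, this can be compared with \eqref{eq:pathidentity} applied to $G\setminus e$ at $u,v$, whose right-hand side is a sum of \emph{squares} $\prod w\cdot m_{(G\setminus e)\setminus P}(\theta)^2$ with positive weights. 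From this I expect to obtain
\[
m_{G\setminus e\setminus\{u\}}(\theta)\,m_{G\setminus e\setminus\{v\}}(\theta)=m_{G\setminus e}(\theta)\,m_{G\setminus\{u,v\}}(\theta)
\]
together with strong vanishing information. Combining this with Lemma \ref{lem: path lemma} and Corollary \ref{cor:cycleincritical}, the aim is to show that a bridgeless non-forest graph cannot be $\theta$-phobic; in that case some nonempty $\gamma$ would satisfy $m_{G\setminus\gamma}(\theta)\neq0$. I expect this step to be the main obstacle. If the direct argument stalls, I would fall back on a minimal counterexample: take $G$ with the fewest edges and delete a cycle edge to reduce to a smaller graph.

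\textbf{Step 2 (splitting along a bridge).} Now let $e=uv$ be a bridge with sides $K\ni u$ and $L\ni v$. Lemma \ref{lemma:turnonoff} gives
\[
\phi_K(\theta,x)\phi_L(\theta,y)-|w_e|^2\phi_{K\setminus\{u\}}(\theta,x)\phi_{L\setminus\{v\}}(\theta,y)\equiv C\neq0,
\]
where $x\in\T_K$ and $y\in\T_L$ are independent variables. Write $f_1=\phi_K(\theta,\cdot)$, $f_2=\phi_{K\setminus\{u\}}(\theta,\cdot)$, $g_1=\phi_L(\theta,\cdot)$ and $g_2=\phi_{L\setminus\{v\}}(\theta,\cdot)$. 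An identity of the form $f_1g_1-cf_2g_2=C$ in separated variables forces a linear-algebra dichotomy. Either $f_1,f_2$ span the same space as the constants together with something on the other side, or one of the $f_i$ (or $g_i$) is identically zero, or both are constant. In particular, if the side $K$ contains a cycle and $f_1\not\equiv0$, then $f_1$ is a nonconstant trigonometric polynomial. The identity then forces $g_1$ and $g_2$ to be constants with a specific ratio, so that $L$ is ``$\theta$-phobic-like'' relative to the root $v$.

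\textbf{Step 3 (choosing the pending piece).} Choose the bridge $e$ so that $K$ contains a cycle and is minimal with this property, i.e. $K$ contains a leaf block of the bridge-block tree that is not a tree. Apply the dichotomy of Step 2. If $f_1\equiv0$ we are done, since $K$ is pending and satisfies $\mathrm{LMST}(\theta)$. Otherwise one of the following must happen. The first possibility is that $f_1$ is a nonzero constant, in which case $K$ is itself $\theta$-phobic with fewer edges; I would then recurse on $K$, using Step 1 to find a bridge inside $K$ and a pending subgraph of $K$ that is still pending in $G$, which contradicts minimality. The second possibility is that $f_1$ is nonconstant, and then constancy of the combination forces $f_2$ to cancel it, i.e. $f_1$ is proportional to $f_2$ up to a constant. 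In this last situation I would show that the non-constant parts of $f_1$ and $f_2$ come from cycles in $K\setminus\{u\}$. I would then push the argument into a smaller pending subgraph of $K$, again contradicting minimality, so that only the case $f_1\equiv0$ survives.
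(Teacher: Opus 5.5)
Your Step 3 is aimed at a false statement. You try to show that a minimal pending subgraph $K$ containing a cycle satisfies $\mathrm{LMST}(\theta)$, but the pending subgraph in the conclusion may have to be a tree. Take $G$ to be a triangle $abc$ with a pendant vertex $d$ joined to $a$, with the adjacency operator and $\theta=0$.

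\begin{itemize}
\item $G$ is $0$-phobic: $m_G(x)=x^4-4x^2+1$, so $m_G(0)=1$, and the only nonempty $\gamma\in\mathcal C(G)$ is the triangle, with $m_{G\setminus\gamma}(x)=x$.
\item The only bridge is $ad$, so your $K$ is the triangle. Then $\phi_K(0,z)=-2\,\mathrm{Re}(z_{ab}z_{bc}z_{ca})\not\equiv 0$; indeed $m_{K\setminus\gamma}\equiv 1$ for $\gamma=K$. So $K$ is not $\mathrm{LMST}(0)$.
\item The correct answer is the pendant vertex $L=\{d\}$. In your notation $f_2\equiv -1$, $g_1\equiv 0$ and $g_2\equiv 1$.
\end{itemize}

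So the Step 2 identity holds because $g_1\equiv 0$, not because $f_2$ cancels $f_1$. Your case analysis never considers this case.

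Even when $g_1$ is a nonzero constant $A$ and $g_2\equiv c'$, what you get is $f_1-|w_e|^2(c'/A)f_2\equiv C/A$. That means $K$ is $\theta$-phobic only for the operator whose potential at $u$ is shifted by $|w_e|^2c'/A$. A recursion on $K$ therefore has to run over all Schr\"odinger operators. The subgraph $Q$ it returns may contain $u$, in which case it is not pending in $G$, and in the shifted case it is $\mathrm{LMST}(\theta)$ only for the shifted operator. It may also be a tree. So there is no contradiction with minimality to be had.

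Step 1 is also not yet an argument. The identity you expect does follow from \eqref{eq:pathidentity} applied to $G\setminus e$: every $u$--$v$ path $P$ in $G\setminus e$ closes up with $e$ to a cycle $\gamma$ with $(G\setminus e)\setminus P=G\setminus\gamma$. But nothing in your sketch turns this into a bridge, and a bare bridge is not what is needed anyway.

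The missing ingredient is Lemma \ref{lem:pending}: a $\theta$-phobic graph has a pending subgraph $K$ with $m_K(\theta)=0$. The paper proves it as follows.
\begin{itemize}
\item Take a $\theta$-positive vertex $v$ (Corollary \ref{lemma:existenceogpositivevertex}).
\item Form the bipartite graph $W$ between $\{v\}\sqcup\partial D_\theta(G\setminus\{v\})$ and the components of $D_\theta(G\setminus\{v\})$.
\item $W$ has a perfect matching by Lemma \ref{lemma:Halltype}. It is unique, since the symmetric difference of two would lift via Lemma \ref{lemma:pathincritical} to a nonempty $\gamma$ with $m_{G\setminus\gamma}(\theta)\neq 0$.
\item An alternating-path argument then gives a component that is a leaf of $W$, hence pending in $G$.
\end{itemize}

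Choosing $K$ this way is what makes the analytic part work. Integrating Lemma \ref{lemma:turnonoff} over $\T_K$ and using $\int_{\T_K}\phi_K(\theta,\cdot)=m_K(\theta)=0$ gives $m_{K\setminus\{u\}}(\theta)\neq 0$ and $g_2\equiv c'\neq 0$. If $f_1\equiv 0$ (in particular if $K$ is a forest) or $g_1\equiv 0$, you are done. Otherwise $g_1\equiv A\neq 0$ and $K$ is $\theta$-phobic for the shifted operator. Induction on $|V(G)|$ then gives a pending $Q\subset K$ that is $\mathrm{LMST}(\theta)$ for that operator. If $u\in Q$, Lemma \ref{lemma:turnonoff} shows that $L\sqcup Q$ is $\mathrm{LMST}(\theta)$ for the original operator and pending in $G$.
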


This proposition actually follows from the following weaker statement.

\begin{lemma}
\label{lem:pending}
    If \(G\) is \(\theta\)-phobic, then it has a pending subgraph whose matching polynomial has $\theta$ as a root.
\end{lemma}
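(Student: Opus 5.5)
The plan is to use the characterisation $\phi_G(\theta,z)\equiv m_G(\theta)\neq 0$ together with the bridge decomposition of Lemma \ref{lemma:turnonoff}, arguing by contradiction. Suppose no pending subgraph of $G$ has $\theta$ as a matching root.

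\textbf{Step 1: find a cycle that misses $\theta$.} Since $G$ is not a forest, pick a $2$-connected block $B$ (or a loop or multi-edge) containing a cycle. By $\theta$-phobicity, $m_{G\setminus\gamma}(\theta)=0$ for every nonempty $\gamma\in\mathcal C(G)$, while $m_G(\theta)\neq 0$. Take a cycle $\gamma$ and expand $m_G$ via \eqref{eq:vertexresursion} or \eqref{eq:pathidentity}. The aim is to locate the ``obstruction'' to $\theta$ in $G\setminus\gamma$. Since $m_{G\setminus\gamma}(\theta)=0$, Lemma \ref{lm:essenexistence} gives $\theta$-essential vertices in $G\setminus\gamma$. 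Consider a $\theta$-critical component $K$ of $D_\theta(G\setminus\gamma)$.

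\textbf{Step 2: show the critical components are pending.} If $K$ were attached to the rest of $G$ through a $2$-edge-connected piece, I would build a new cycle or $2$-regular subgraph $\gamma'$ passing through $K$. Using Lemma \ref{lemma:pathincritical}, a path $P$ inside $K$ with $m_{K\setminus P}(\theta)\neq 0$ can be closed up through the outside, or Corollary \ref{cor:cycleincritical} applies if $K$ has a cycle. Then Lemma \ref{lem: path lemma} and the Hall-type surplus of Lemma \ref{lemma:Halltype} (matching $\partial D_\theta$ into distinct components) would produce a nonempty $\gamma'$ with $m_{G\setminus\gamma'}(\theta)\neq 0$, contradicting phobicity.

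Hence the cycle $\gamma$ together with the matching structure can only be blocked by components hanging off bridges. Concretely, I would pick a bridge $e=uv$ whose side $L$ (containing $v$) carries the obstruction. Lemma \ref{lemma:turnonoff} gives
\[\phi_G=\phi_K\phi_L-|w_e|^2\phi_{K\setminus\{u\}}\phi_{L\setminus\{v\}}.\]
Since $\phi_G(\theta,\cdot)$ is a nonzero constant, comparing the $z$-dependence on each side forces one of $\phi_L(\theta,\cdot)$ or $\phi_K(\theta,\cdot)$ to vanish in its $\gamma$-coefficients. Induction on the number of edges, applied to a side that is again phobic or has $\theta$ as a root, then produces a pending piece with $m(\theta)=0$.

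\textbf{Main obstacle.} The hard part is Step 2: turning the absence of a suitable bridge into an explicit nonempty $\gamma'$ with $m_{G\setminus\gamma'}(\theta)\neq0$. I would first try an induction on $|E(G)|$. Choose a leaf block of the bridge-block tree that contains a cycle, and apply the same argument used in the proof of Lemma \ref{lem: kappa_lb} with the multi-edge recursion \eqref{eq:multiedgerecursion} over the cut edges. The goal is to show that if that leaf block and its pending side have $m(\theta)\neq 0$, then a cycle inside the block kills $\theta$ in $G$.
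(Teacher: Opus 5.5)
There is a genuine gap, and it is the step you flag yourself: nothing in the proposal actually produces a bridge. Moreover, the mechanism in Step 2 is false. That mechanism says: if a $\theta$-critical component $K$ of $D_\theta(G\setminus\gamma)$ lies in a $2$-edge-connected piece, then some nonempty $\gamma'$ through $K$ has $m_{G\setminus\gamma'}(\theta)\neq0$.

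Here is a counterexample. Take the adjacency operator, $\theta=0$, and let $G$ be the triangle $abc$ together with a vertex $d\sim a,b$ and pendant vertices $p_b\sim b$, $p_c\sim c$.
\begin{itemize}
\item The only perfect matching is $\{ad,bp_b,cp_c\}$, and all three cycles of $G$ pass through $b$, so $G$ is $0$-phobic.
\item For $\gamma=abc$, the isolated vertex $d$ is a $0$-critical component of $G\setminus\gamma$, attached to $G$ by the two non-bridge edges $da,db$.
\item Yet every cycle through $d$ contains $b$ and therefore isolates $p_b$, so no contradiction with phobicity arises.
\end{itemize}
Deleting a cycle simply gives no control: $\operatorname{mult}_\theta(G\setminus\gamma)$ can be large, and critical components can hang on $\gamma$ itself.

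Your later step, ``pick a bridge whose side carries the obstruction'', presupposes the conclusion; already for $\theta=0$ this is a Kotzig-type theorem. Lemma \ref{lemma:turnonoff} alone cannot locate such a bridge. Joining two copies of the graph above by an edge between their vertices $a$ gives a $0$-phobic graph with a bridge whose two sides both have $m(0)\neq0$. Also, the induction you sketch is really the mechanism of Proposition \ref{prop:LMSTsubgraph}, which takes the present lemma as input.

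The idea you are missing is to delete a single $\theta$-positive vertex rather than a cycle. The paper's argument runs as follows.
\begin{enumerate}
\item By Corollary \ref{lemma:existenceogpositivevertex} (a consequence of Lemma \ref{lem: path lemma}), such a vertex $v$ exists, so $\operatorname{mult}_\theta(G\setminus\{v\})=1$.
\item Proposition \ref{prop:mult-roots} then gives $q$ critical components $K_1,\dots,K_q$ of $G\setminus\{v\}$ with $|\partial D_\theta(G\setminus\{v\})|=q-1$.
\item Putting $v$ back gives a \emph{balanced} bipartite graph $W$ between $X=\{v\}\sqcup\partial D_\theta(G\setminus\{v\})$ and $\{K_1,\dots,K_q\}$. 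Every edge leaving a $K_i$ ends in $X$, so $W$ records all of them.
\item Lemma \ref{lemma:Halltype}, together with the fact that $v$ must see some $K_i$ (otherwise $m_G(\theta)=0$), yields a perfect matching of $W$.
\item Phobicity forces this perfect matching to be unique. Take the symmetric difference of two perfect matchings and cross each visited $K_{i_j}$ by a path from Lemma \ref{lemma:pathincritical}. This gives a nonempty cycle $\gamma_G$ in $G$, and \eqref{eq:multiedgerecursion} shows $m_{G\setminus\gamma_G}(\theta)\neq0$.
\item Following an alternating walk shows that a bipartite graph with a unique perfect matching has a vertex of degree one on the $Y$-side. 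That $K_i$ is joined to $G$ by a single edge, hence is pending, and $m_{K_i}(\theta)=0$ because $K_i$ is $\theta$-critical.
\end{enumerate}
No induction and no a priori bridge are needed: the bridge comes out of the uniqueness of the perfect matching.
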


Before proving Lemma \ref{lem:pending}, we start with a useful property of $\theta$-phobic graphs.

\begin{corollary}[Corollary of Lemma \ref{lem: path lemma}]
\label{lemma:existenceogpositivevertex}
    If \(G\) is \(\theta\)-phobic, then it contains a \(\theta\)-positive vertex.
\end{corollary}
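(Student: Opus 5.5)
The plan is to apply Lemma \ref{lem: path lemma} to a path obtained by opening up a cycle of \(G\). The phobic hypothesis rules out the second alternative of that lemma, which leaves a vertex whose deletion kills \(m(\theta)\).

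First, since \(G\) is \(\theta\)-phobic, \(m_G(\theta)\neq 0\), so \(\operatorname{mult}_\theta(G)=0\). By the interlacing remark before the definition of essential/neutral/positive vertices, a vertex \(v\) is then \(\theta\)-positive exactly when \(m_{G\setminus\{v\}}(\theta)=0\). It therefore suffices to find a vertex \(v\) with \(m_{G\setminus\{v\}}(\theta)=0\).

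Second, \(G\) is not a forest by definition of \(\theta\)-phobic, so it contains a cycle \(\gamma\), possibly a self-loop or a 2-cycle formed by a multi-edge. This is a non-empty element of \(\mathcal C(G)\), so \(m_{G\setminus\gamma}(\theta)=0\).
\begin{itemize}
\item If \(\gamma\) is a self-loop at a vertex \(v\), then \(G\setminus\gamma=G\setminus\{v\}\), since deleting a subgraph removes its vertices and all incident edges. We are done immediately.
\item Otherwise write \(\gamma=v_1v_2\cdots v_k\) with \(k\ge 2\), and let \(P\) be the path \(v_1\cdots v_k\) obtained by dropping the edge from \(v_k\) back to \(v_1\). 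Under the paper's convention, \(G\setminus P\) and \(G\setminus\gamma\) both delete the vertex set \(\{v_1,\dots,v_k\}\) together with every incident edge. Hence \(G\setminus P=G\setminus\gamma\) and \(m_{G\setminus P}(\theta)=0\).
\end{itemize}

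Third, apply Lemma \ref{lem: path lemma} to \(P\). Either some \(v\in V(P)\) satisfies \(m_{G\setminus\{v\}}(\theta)=0\), which gives a \(\theta\)-positive vertex by the first step. Or there is a cycle \(\gamma'\) in \(G\) with \(m_{G\setminus\gamma'}(\theta)\neq 0\). Since \(\gamma'\) is a non-empty 2-regular subgraph, this contradicts \(\theta\)-phobicity, so the second alternative cannot occur.

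I do not expect a real obstacle. The only points needing care are the identification \(G\setminus P=G\setminus\gamma\) under the deletion conventions, and the degenerate cases of loops and multi-edge 2-cycles, which are handled above.
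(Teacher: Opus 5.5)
Your proof is correct and follows the paper's argument: you open a cycle into a path $P$ with $G\setminus P=G\setminus\gamma$, apply Lemma \ref{lem: path lemma}, and rule out the cycle alternative by $\theta$-phobicity. Your extra remarks make explicit what the paper's two-line proof leaves implicit. These are that $m_{G\setminus\{v\}}(\theta)=0$ together with $m_G(\theta)\neq 0$ forces $v$ to be $\theta$-positive by interlacing, and the handling of loops and double edges.
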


\begin{proof}
    Since \(G\) is \(\theta\)-phobic, any path \(P\) obtained by ignoring one edge of any cycle of \(G\) satisfies
    \(m_{G\setminus P}(\theta)=0.\)
    By Lemma \ref{lem: path lemma}, since $G$ cannot contain a cycle \(\gamma\) with \(m_{G\setminus\gamma}(\theta)\neq 0\), there must exist a vertex \(v\) in \(G\) such that \(m_{G\setminus\{v\}}(\theta)=0\).
\end{proof}

We are now ready for the proof of Lemma \ref{lem:pending}.

\begin{proof}[Proof of Lemma \ref{lem:pending}]
    By Corollary \ref{lemma:existenceogpositivevertex}, we let \(v\) be a \(\theta\)-positive vertex of \(G\) so that \(\mathrm{mult}_{\theta}(G\setminus\{v\})=1\). By Proposition \ref{prop:mult-roots}, we then have
    \[\operatorname{cc}  D_{\theta}(G\setminus\{v\})-\left|\partial  D_{\theta}(G\setminus\{v\})\right|=1.\]
    Let \(K_1,\cdots,K_q\) be the connected components of \( D_{\theta}(G\setminus\{v\})\), then we have \(|\partial D_{\theta}(G\setminus\{v\})|=q-1\). Define 
    \[X=\{v\}\sqcup\,\partial   D_{\theta}(G\setminus\{v\}).\]
    Note that \(|X|=q\). We may define a bipartite graph \(W\) with vertex set \(X\sqcup Y\) where \(Y=\{K_1,\cdots K_q\}\): for any edge between some vertex in $X$ and some vertex in $K_i\in Y$ in $G$, we have a corresponding edge in $W$.
    
    Note that $v$ does connect to at least one component in $Y$ 
    since otherwise $\theta$ would be a root of the matching polynomial of $G$. Indeed, $v$ is $\theta$-positive in $G$, and so $D_\theta(G)=D_\theta(G\setminus\{v\})$. Moreover, $\partial  D_\theta(G)$ contains all of $\partial  D_\theta(G\setminus\{v\})$ and nothing more since the only other possible vertex in this boundary would be $v$, which is ruled out if $v$ does not connect to a component of $Y$. But then Proposition \ref{prop:mult-roots} would imply $\theta$ is a root of $m_G(x)$ which is not the case.
    
    As in the proof of Proposition \ref{prop:0-hard}, we will now show that $\theta$-phobic implies the existence of a unique perfect matching in $W$. First of all, a perfect matching always exists since after the removal of \(v\), the remaining bipartite graph is precisely $G_\theta$ which has a positive surplus from \(X\setminus\{v\}\) by Lemma \ref{lemma:Halltype}. This means that by Theorem \ref{thm:Halls_thm} if we pick a neighbour $K_i$ of $v$ in $W$ then we can find a matching in $W$ that contains $X\setminus\{v\}$ and avoids $K_i$. Adding an edge in $W$ from $v$ to $K_i$ then provides a perfect matching.
    
    If the perfect matching is not unique, the symmetric difference of two perfect matchings (as in Proposition \ref{prop:0-hard}) is a degree-\(2\) subgraph, say \(\gamma_W\), of \(W\). If this degree-\(2\) graph doesn't contains all vertices in \(W\), then the common edges of the two matchings contains all vertices that are left. We show that the existence of this decomposition leads to a contradiction with $\theta$-phobic.
    
    Without loss of generality, we assume that \(\gamma_W\) is connected and visits 
    \[u_1,K_{i_1}, u_2, K_{i_2} , \ldots, u_l, K_{i_l} ,u_1:=u_{l+1}\]
    in order. Denote by \(q^-_j,q^+_j\) the vertices in \(K_{i_j}\) that are connected to \(u_j\) and \(u_{j+1}\) in \(G\) corresponding to the edges in this \(\gamma_W\) that connects \(u_j\) and \(u_{j+1}\) to \(K_{i_j}\). By Lemma \ref{lemma:pathincritical}, there exists a (directed) path \(P_j\) in \(K_{i_j}\) connecting \(q^-_j\) and \(q^+_j\), such that 
    \(m_{K_{i_j}\setminus P_j}(\theta)\neq 0\). If $q^-_j=q^+_j$ then this path is just this vertex. From \(\gamma_W\) and \(P_j\), we then construct 
    a cycle \(\gamma_G\) in \(G\) that transverses in order
    \[u_1, P_1, u_2, P_2, \ldots, u_l, P_l, u_1.\]

    We claim that \(m_{G\setminus \gamma_G}(\theta)\neq 0\), leading to a contradiction of \(\theta\)-phobic. If \(\gamma_W\) contains all vertices of $W$, then each connected component of what is left in $G\setminus \gamma_G$ is either of the form \(K_{i_j}\setminus P_j\), or the connected components of  \[G\setminus \left(\{v\}\sqcup D_{\theta}\left(G\setminus \{v\}\right)\sqcup\partial D_{\theta} \left(G\setminus \{v\}\right)\right).\]
    The matching polynomial of all these graphs are non-vanishing at \(\theta\), and thus since the matching polynomial of a disconnected graph is the product of the matching polynomials of the components, we have \(m_{G\setminus\gamma_G}(\theta)\neq 0\) which is a contradiction to $\theta$-phobic.

    If \(\gamma_W\) does not contain all vertices in $W$, then \(l<q\). Suppose that \(u_{l+1},\ldots,u_q\) and \(K_{i_{l+1}},\ldots, K_{i_{q}}\) are the vertices not in \(\gamma_W\) in \(W\). Denote by \(F\) the collection of edges in \(G\) between the vertices $u_{l+1},\ldots,u_q$ and the vertices in the $K_{i_{l+1}},\ldots,K_{i_q}$. By \eqref{eq:multiedgerecursion} we have
    \begin{equation*}
       m_{G\setminus \gamma_G}(\theta)=\sum_{M \in \mathcal{M}_{G\setminus\gamma_G},\; E(M) \subset F} (-1)^{|E(M)|}\prod_{e\in \vec{E}(M)}w_e\cdot m_{G\setminus(\gamma_G\cup F\cup V(M))}(\theta).
    \end{equation*}
    Suppose that one of the matchings $M$ on the right hand side has no intersection with some \(K_{i_j}\), \(j\in \{l+1,\ldots,q\}\), then \(K_{i_j}\) is a connected component of \(G\setminus(\gamma_G\cup F\cup V(M))\), thus
    \[m_{G\setminus(\gamma_G\cup F\cup V(M))}(\theta)=0.\] Therefore, we only need to consider those \(M\) that intersect with all of \(K_{i_{l+1}},\ldots, K_{i_{q}}\). In this case \(V(M)\) must contain \(u_{l+1},\ldots,u_q\) since its edges are contained in $F$ which have one of their endpoints among these vertices. Moreover this implies that \(|E(M)|=q-l\). Such a matching \(M\) always exists because the common edges of the two perfect matchings we are considering is an example. 

    In this case, we have 
    \begin{equation*}
       m_{G\setminus \gamma_G}(\theta)=(-1)^{q-l} \sum_{M \in \mathcal{M}_{G\setminus\gamma_G},\;|E(M)|=q-l,\; E(M) \subset F} \prod_{e\in \vec{E}(M)}w_e\cdot m_{G\setminus(\gamma_G\cup F\cup V(M))}(\theta).
    \end{equation*}
    Each component of such \(G\setminus(\gamma_G\cup F\cup V(M))\) is either \(K_{i_j}\setminus P_j\) for \(j\leq l\), or the connected components of \(G\setminus\left(\{v\}\cup D_{\theta}\left(G\setminus\{v\}\right)\cup\partial D_\theta  \left(G\setminus\{v\}\right)\right)\), or \(K_{i_{j}}\) for \(l<j\leq q\), after deleting a vertex. The matching polynomials of these graphs are non-vanishing at \(\theta\) and thus \(m_{G\setminus \gamma_G}(\theta)\neq 0\), a contradiction to $\theta$-phobic.

    So, we have that \(W\) has a unique perfect matching and is bipartite. We claim that one of the \(K_i\) has to be a leaf. If not, we consider the alternating walk starting from any \(v\in X\) and the matching edge covering it. Such a walk is always non-backtracking since it is alternating. Furthermore, it can never visit a vertex twice because that gives rise to an alternating cycle which contradicts the uniqueness of the perfect matching (we can interchange the edges in the cycle that are in the matching with those that aren't to get a new perfect matching). 
    
    The walk cannot terminate at a $K_i$ otherwise it is a leaf, since the incoming edge to it is in the matching and so we can then traverse any other outgoing edge if one existed so the path wouldn't terminate. This means it must terminate at a vertex in $X$ but this cannot happen. Indeed, an incoming edge to a vertex in $X$ is not in the matching by construction. However, the matching is perfect, so there is an edge in the matching containing that vertex which we can then traverse to extend the path. It follows that one of the $K_i$ must indeed be a leaf in \(W\) and this means it is a pending subgraph in \(G\).
\end{proof}

\begin{proof}[Proof of Proposition \ref{prop:LMSTsubgraph}]
We prove the result by induction on the number of vertices in $G$. Precisely, we induct on the statement: if $G$ is a graph on $k$ vertices that is $\theta$-phobic with respect to some $\mathcal{H}$, then it has a pending subgraph that satisfies $\mathrm{LMST}(\theta)$ with respect to {(the restriction of)} $\mathcal{H}$.

Suppose first that \(|V(G)|=1\), then the statement is vacuously true, because there are no $\theta$-phobic graphs on one vertex for any $\mathcal{H}$. Indeed, $\theta$-phobic means $G$ must be a bouquet but then deleting any loop gives the empty graph which has matching polynomial equivalent to 1, and so $\theta$ is never a root.

Assume that the statement is true whenever \(|V(G)|\leq k\), we show it also holds for \(|V(G)|= k+1\), that is, we wish to prove that, given $\mathcal{H}$ on $G$, there exists a pending subgraph $Q$ of $G$ that satisfies $\phi_Q(\theta,z)\equiv 0$ for all $z\in \T_Q$. 

Let \(K\) be a pending subgraph of \(G\) that connects to the rest of the graph \(L=G\setminus K\) by a (directed) edge \(e\) as guaranteed by Lemma \ref{lem:pending}. Let $u\in K$ and $v\in L$ be the endpoints of $e$. Furthermore, for $z\in \T_G$ and any subgraph $G'$ of $G$, let $z_{G'}$ be the coordinates of $z$ restricted to the edges that appear in $G'$.  Lemma \ref{lemma:turnonoff} states that because $G$ is $\theta$-phobic,
    \begin{align}
    \label{eq:pending-decomp}\phi_K(\theta,z_K) \phi_L(\theta,z_L) -|w_e|^2\phi_{K\setminus\{u\}}(\theta,z_{K\setminus\{u\}})\phi_{L\setminus\{v\}}(\theta,z_{L\setminus\{v\}})\equiv c\neq 0.\end{align}
 Note that we have \(m_K(\theta)=0\) and  \(m_{K\setminus\{u\}}(\theta)\neq 0 \). The latter claim follows from integrating \eqref{eq:pending-decomp} over the torus \(\T_K\) to obtain

\begin{align*}
    0\neq c \equiv&\phi_L(\theta,z_L)\int_{\mathbb T_K}\phi_K(\theta,z_K)-|w_e|^2 \phi_{L\setminus\{v\}}(\theta,z_{L\setminus\{v\}}) \int_{\mathbb T_K}\phi_{K\setminus\{u\}}(\theta,z_{K\setminus\{u\}})\\
    =&0-|w_e|^2 \phi_{L\setminus\{v\}}(\theta,z_{L\setminus\{v\}}) m_{K\setminus\{u\}}(\theta),
\end{align*}
where we have used that the integral of the characteristic polynomial over all of its $z$ variables is just the matching polynomial. From this equality we also see that $\phi_{L\setminus\{v\}}(\theta,z_{L\setminus\{v\}})$ is constant in $z$ because
    $$\phi_{L\setminus\{v\}}(\theta,z_{L\setminus\{v\}}) = \frac{-c}{|w_e|^2 m_{K\setminus\{u\}}(\theta)}\equiv c'\neq 0.$$

Now, if \(\phi_K(\theta,z_k)\equiv 0\) or \(\phi_L(\theta,z_L)\equiv 0\) for fixed $\theta$, then we are done, $K$ or $L$ would be a desired pending subgraph. So let us suppose otherwise. In this case, $K$ is not a forest because then $\phi_K(\theta,z_K)=m_K(\theta)=0$. So, there exists $\tilde{z}_k\in\T_K$ for which $\phi_K(\theta,\tilde{z}_K)\neq 0$ and thus for any $z_L\in\T_L$ we have from \eqref{eq:pending-decomp} that
\[\phi_L(\theta,z_L)=\frac{c'|w_e|^2\phi_{K\setminus\{u\}}(\theta,\tilde{z}_{K\setminus\{u\}})+c}{\phi_K(\theta,\tilde{z}_K)}\equiv A,\]
which is constant for all $z_L\in\T_L$ and non-zero because otherwise $\phi_L(\theta,z_L)\equiv 0$.

Now consider \eqref{eq:pending-decomp} again, and this time divide through by $A$ to obtain for any $z_K\in\T_K$
\[\phi_K(\theta,z_K)-|w_e|^2\frac{c'}{A}\phi_{K\setminus\{u\}}(\theta,z_{K\setminus\{u\}})\equiv \frac{c}{A}\neq 0.\]

Define \(\mathcal{H}'\) to be the Schr\"odinger matrix on \(G\) given by shifting the potential at \(u\) by \(|w_e|^2\frac{c'}{A}\) and leaving other potentials and weights unchanged. That is,
    $$\mathcal{H}'(z) = \mathcal{H}(z)+\mathrm{diag}\left(0,\ldots,0,\frac{c'}{A}|w_e|^2,0,\ldots,0\right), $$
where the non-zero entry of the diagonal matrix is in the position corresponding to $u$.
We now expand the characteristic polynomial.  We obtain for any $z_K\in\T_K$ that 
    \begin{align*}
        \phi_K^{\mathcal{H}'}(\theta,z_K) &= \det\left(\left[\theta I -\mathcal{H}(z)-\mathrm{diag}\left(0,\ldots,0,\frac{c'}{A}|w_e|^2,0,\ldots,0\right)\right]_K\right)\\
        &=\phi_K(\theta,z_K)-\frac{c'}{A}|w_e|^2\det\left(\left[\theta I-\mathcal{H}(z)\right]_{K\setminus\{u\}}\right)\\
        &=\phi_K(\theta,z_K)-\frac{c'}{A}|w_e|^2\phi_{K\setminus\{u\}}(\theta,z_{K\setminus\{u\}})\\
        &\equiv \frac{c}{A}\neq 0.
    \end{align*} 
Thus, since $K$ is not a forest, it is \(\theta\)-phobic for $\mathcal{H}'$ and so by induction, there exists a pending subgraph $Q$ of $K$ that satisfies $\mathrm{LMST}(\theta)$ with respect to $\mathcal{H}'$. 

If  \(u\notin V(Q)\), then \(\mathcal{H}|_Q=\mathcal{H}'|_Q\), and so \(Q\) is the desired subgraph of $G$. Else suppose that \(u\in V(Q)\), then 
\[\phi^{\mathcal{H}'}_Q(\theta,z_Q)\equiv 0,\]
for any $z_Q\in\T_Q$ since $Q$ is $\mathrm{LMST}(\theta)$ with respect to $\mathcal{H}'$. But then, for any $z\in\T_G$ we have 
\[0\equiv \phi_Q^{\mathcal{H}'}(\theta,z_Q)=\phi_Q(\theta,z_Q)-|w_e|^2\frac{c'}{A}\phi_{Q\setminus\{u\}}(\theta,z_{Q\setminus\{u\}}).\] 
Since $A=\phi_L(\theta,z_L)$ and $c'=\phi_{L\setminus\{v\}}(\theta,z_{L\setminus\{v\}})$ (which are constant for any choice of $z_G$) {and $\{u,v\}$ is a bridge of $L\sqcup Q$}, we see that this is equivalent by Lemma \ref{lemma:turnonoff} to 
    $$\phi_{L\sqcup Q}(\theta,z_{L\sqcup Q})=0.$$
Since $L\sqcup Q$ attaches to $K\setminus Q$ {by the bridge edge between $Q$ and $K\setminus Q$}, it is the desired pending subgraph of $G$ satisfying $\mathrm{LMST}(\theta)$ with respect to $\mathcal{H}$.
\end{proof}

We are ready for the proof of Theorem \ref{thm:surprisethm}.

\begin{proof}[Proof of Theorem \ref{thm:surprisethm}]
We show that for any finite \(G\) such that \(\phi_G(\theta, z_G)\equiv c\neq 0\), \(\theta \)
is not in the spectrum of \(G_{\Gamma}\) but in its convex hull.
We proceed by induction on \(|V(G)|\). The base case when \(|V(G)|=1\) is vacuously true because then $G$ is not $\theta$-phobic. So, assume that the statement holds when \(|V(G)|\leq k\), we prove the case when \(|V(G)|=k+1\). Let $K$ be the pending subgraph for $G$ obtained from Proposition \ref{prop:LMSTsubgraph}, $L$ its complement {and $e = (v,u)$ the bridge edge, $u \in K$, $v \in L$}. Viewing \(\mathcal H_\Gamma-\theta I\) as a matrix in \(M_{V(G)}(\mathbb C\Gamma)\subset M_{V(G)}(\mathcal N\Gamma)\), we write
\[\mathcal H_\Gamma-\theta I=\USSEnterprise
  {{B_1}{C_1}{0}{0}}
  {{C_1^*}{M_1}{{ T}}{0}}
  {{0}{{T^*}}{M_2}{C_2^*}}
  {{0}{0}{C_2}{B_2}}.\]
Here the submatrices \[\begin{pmatrix}
B_1 & C_1 \\
C_1^* & M_1
\end{pmatrix} \in M_{V(L)} (\mathbb C \Gamma), \quad \begin{pmatrix}
M_2 & C_2^* \\
C_2 & B_2
\end{pmatrix} \in  M_{V(K)} (\mathbb C \Gamma) \] correspond to the restriction of
\(\mathcal H_\Gamma-\theta I\) on \(V(L)\) and \(V(K)\) respectively. The entries \(M_1,M_2\) in $\mathbb C \Gamma$ correspond to the restriction on \(\{v\}\) and \(\{u\}\). The two extra entries {\(T = w_e \rho_{\Gamma} (g_e) \)  and \(T^* = w_{\bar e} \rho_{\Gamma} (g_{\bar e}) \)}  in the block decomposition correspond to the bridge between \(K\) and \(L\).

Recall that \(B_1\) is the lift of the operator \(\mathcal H|_{L\setminus\{v\}}-\theta I\) which acts on \(\pi^{-1}_\Gamma(L\setminus\{v\})\), where $\pi_\Gamma:G_\Gamma\to G$ is the covering map. Identically to the proof of Proposition \ref{prop:LMSTsubgraph}, we obtain that for any $z_{L\setminus\{v\}}\in\T_{L\setminus\{v\}}$, we have $\phi_{L\setminus\{v\}}(\theta,z_{L\setminus\{v\}})\equiv c\neq 0$. If $L\setminus\{v\}$ is a tree, then $B_1$ is equivalent to $\mathcal{H}(z)-\theta I$ restricted to $L\setminus\{v\}$ and is hence invertible due to the non-zero determinant. Otherwise, $L\setminus\{v\}$ is $\theta$-phobic and so we can apply the induction statement to obtain invertibility of $B_1$ in \(M_{|V(L)|-1}\left(\mathcal{N}\Gamma\right)\). 

We similarly obtain invertibility for $B_2$ from the following facts: $G$ is $\theta$-phobic, so that $\phi_G(\theta,z_G)\equiv c'\neq 0$ and $K$ satisfies $\mathrm{LMST}(\theta)$ so that $\phi_K(\theta,z_K)\equiv 0$ giving by Lemma \ref{lemma:turnonoff},
    $$0\neq c' \equiv -|w_e|^2\phi_{K\setminus\{u\}}(\theta,z_{K\setminus\{u\}})\phi_{L\setminus\{v\}}(\theta,z_{L\setminus\{u\}}),$$
which implies that $\phi_{K\setminus\{u\}}(\theta,z_{K\setminus\{u\}})\equiv A\neq 0$ for all $z_{K\setminus\{u\}}\in\T_{K\setminus\{u\}}$. So, either $K\setminus\{u\}$ is a tree and $B_2$ is invertible, or it is $\theta$-phobic and so the induction statement gives invertibility of $B_2$.

Note that by Definition \ref{def:dos}, \eqref{eq:vonNeumannnull} and Theorem \ref{thm:lowerbound}, \[1\leq |V(K)|\mu_{K^\ab}(\{\theta\}) \leq  |V(K)|\mu_{K_\Gamma}(\{\theta\})=|V(K)|-\mathrm{rank }_\Gamma\begin{pmatrix}
M_2 & C_2^* \\
C_2 & B_2
\end{pmatrix},\]
and so we have by Lemma \ref{lem:rank-identities},
\[
\begin{aligned}
    |V(K)|-1&\geq \mathrm{rank }_\Gamma\begin{pmatrix}
M_2 & C_2^* \\
C_2 & B_2
\end{pmatrix}=\mathrm{rank }_\Gamma\begin{pmatrix}
M_2-C_2^*B_2^{-1}C_2 & 0 \\
C_2 & B_2
\end{pmatrix}\\&\geq \mathrm{rank }_\Gamma \left(M_2-C_2^*B_2^{-1}C_2\right)+\mathrm{rank }_\Gamma B_2\\&=\mathrm{rank }_\Gamma \left(M_2-C_2^*B_2^{-1}C_2\right)+|V(K)|-1,
\end{aligned}
\]
therefore \(\mathrm{rank }_\Gamma \left(M_2-C_2^*B_2^{-1}C_2\right)=0\) and thus \(M_2=C_2^*B_2^{-1}C_2\) {since  the tracial state is faithful in $\mathcal N \Gamma$}. 

By performing Gaussian elimination repeatedly and using $T T^* = |w_e|^2 I$, we have 
 { \[
    \USSEnterprise
  {{B_1}{C_1}{0}{0}}
  {{C_1^*}{M_1}{T}{0}}
  {{0}{T^*}{M_2}{C_2^*}}
  {{0}{0}{C_2}{B_2}}\xrightarrow{ X_1 \times} \USSEnterprise
  {{B_1}{C_1}{0}{0}}
  {{C_1^*}{M_1}{T}{0}}
  {{0}{T^*}{0}{0}}
  {{0}{0}{C_2}{B_2}}\xrightarrow{ X_2 \times} \USSEnterprise
  {{B_1}{0}{0}{0}}
  {{C_1^*}{0}{T}{0}}
  {{0}{T^*}{0}{0}}
  {{0}{0}{C_2}{B_2}},\]
  where \[X_1=\USSEnterpriseD
  {{I}{0}{0}{0}}
  {{0}{I}{0}{0}}
  {{0}{0}{I}{Y_1}}
  {{0}{0}{0}{I}} \qquad \text{and}  \qquad X_2=\USSEnterpriseD
  {{I}{0}{Y_2}{0}}
  {{0}{I}{Y_3}{0}}
  {{0}{0}{I}{0}}
  {{0}{0}{0}{I}}\]}
with \(Y_1=C_2^*B_2^{-1}\), \(Y_2=-\frac{C_1 T}{|w_e|^2}\) and \(Y_3=-\frac{M_1 T}{|w_e|^2}\)
and \(X_i \times\) means to do the multiplication on the left. Since the last matrix is lower block-triangular { with invertible blocks on the block diagonal}, it is invertible. For the same reason, \(X_1\) and \(X_2\) are invertible too.
  Thus \(\mathcal H_\Gamma-\theta I\) is invertible and hence $\theta\notin\mathrm{Spec}\, G_\Gamma$.

Now we show \(\theta\) is in fact in a spectral gap. Let \(G_\Gamma\) be a normal cover of \(G\)
viewing as a derived voltage graph. By \(\theta\)-phobic, we have \(\mathrm{mult}_\theta (G \setminus \gamma)>0\) for any non-empty \(\gamma\in \mathcal{C}(G)\). Note that since \(G\) is by definition not a tree, there always exists a non-empty \(2\)-regular subgraph.
Theorem \ref{thm:lowerbound} then shows that 
\(\theta\) is an eigenvalue of \((G\setminus \gamma)_\Gamma\) for all non-empty \(\gamma\in \mathcal{C}(G)\), therefore we have 
\[\min\,\operatorname{Spec}\, (G\setminus \gamma)_\Gamma\leq \theta \leq  \max\,\operatorname{Spec} (G\setminus \gamma)_\Gamma.\]
Note that \((G \setminus \gamma)_\Gamma\) is an induced subgraph of \(G_\Gamma\), by the min-max theorem, we have \[\min\,\operatorname{Spec}\, G_\Gamma \leq \min\,\operatorname{Spec}\, (G\setminus \gamma)_\Gamma\leq \theta \leq  \max\,\operatorname{Spec} (G\setminus \gamma)_\Gamma \leq \max\,\operatorname{Spec}\, G_\Gamma.\] 
Since \(\theta\) is not in the spectrum, 
we have 
\[\min\,\operatorname{Spec}\, G_\Gamma < \theta < \max\,\operatorname{Spec}\, G_\Gamma.\]
 It concludes the proof.
\end{proof}

\subsection{Proof of Theorem \ref{thm:upperbound}}

\subsubsection{Monotone labelling revisited}

We first outline the various notions of the monotone labelling method first introduced in \cite{Bo.Se.Vi2017} and subsequently generalised by the first named author in \cite{bordenave2026logarithmic}.

\begin{definition}
    Let \(G=(V,E)\) be a graph. A map \(\eta: V\rightarrow \Z\) is called a labelling of the vertices of \(G\) with integers. We shall call a vertex \(v\)
\begin{itemize}
    \item prodigy if it has a neighbour \(w\) with \(\eta(w) < \eta (v)\) so that all other neighbours of \(w\) also have
label less than \(\eta(v)\);
     \item level if not prodigy and if all of its neighbors have the same or lower labels;
     \item bad if none of the above holds.
\end{itemize}
\end{definition}

\begin{definition}
    On a voltage graph \(G_\Gamma\), a labelling is called periodic if for any \(u\in V_{G_\Gamma}\), \(\eta(u)=\eta(\Gamma\cdot u)\). 
\end{definition}

Note that with respect to a fixed periodic labelling, \(u\) is prodigy, level or bad if and only if \(\Gamma\cdot u \) is prodigy, level or bad. Thus we can always find induced subgraphs \(P\), \(B\) and \(L\) of \(G\) such that \(P_\Gamma\), \(B_\Gamma\) and \(L_\Gamma\) contain 
prodigy, level or bad vertices of \(G_\Gamma\), respectively. The following result is a minor variant of a result of the first named author \cite[Lemma 1]{bordenave2026logarithmic} whose proof follows by replacing finite dimensions with von Neumann dimensions (it is also a corollary of \cite[Theorem 4]{bordenave2026logarithmic}). This is made by possible by the periodicity of the labelling.

\begin{proposition}[{\cite[Lemma 1]{bordenave2026logarithmic}}]\label{th:monof}
Fix a finite graph \(G=(V,E)\) with a Schr\"odinger operator \(\mathcal{H}\). 
Let $\eta$ be a periodic labelling on $G_\Gamma$ with $n$ distinct values. There exists $\beta = \beta (G, \mathcal{H}) \geq 1$ such that for any interval $I$ of length at most $\beta^{-n}$, we have  
$$
\mu_{G_\Gamma}(I) \leq  \frac{ |B|}{|V|}+ \frac{1}{|V|} \sum_k  |L^k| \mu_{L^k_\Gamma}(I_n) ,
$$
where,  \(B\) and \(L^k\) are induced subgraphs of \(G\) such that vertices in \(B_\Gamma\) and \(L^k_\Gamma\) are bad and level vertices with label $k$, respectively and $I_n $ is the closed interval of length $|I|^{1/ (6 n  \ln n)}$ with the same centre. 
Moreover, if there are no edges between level vertices with different label, we have 
$$
\mu_{G_\Gamma}(I) \leq  \frac{ |B|}{|V|}+ \frac{|L|}{|V|} \mu_{L_\Gamma}(I_n) ,
$$
where \(L\) is the induced subgraph of \(G\) such that the vertices in \(L_\Gamma\) are the level vertices.
\end{proposition}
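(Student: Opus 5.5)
The plan is to transfer the finite-dimensional argument of \cite[Lemma 1]{bordenave2026logarithmic} to the von Neumann setting. Periodicity of $\eta$ makes every object in that argument $\Gamma$-invariant, so finite dimensions can be replaced by $\dim_\Gamma$ and ranks by $\mathrm{rank}_\Gamma$.

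First, I will set up the vertex classes. Since $\eta$ is periodic, the prodigy, level and bad classes are unions of fibres; they equal $P_\Gamma$, $L_\Gamma$ and $B_\Gamma$ for induced subgraphs $P,L,B$ of $G$ that partition $V$. Write $\Pi_I=\mathbf 1_I(\mathcal H_\Gamma)$, so that $|V|\mu_{G_\Gamma}(I)=\dim_\Gamma\operatorname{im}\Pi_I$.

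Next, I will remove the bad vertices. Restrict $f\in\operatorname{im}\Pi_I$ to $V\setminus B$. The kernel of this restriction map lives in $\C^{B}\otimes\ell^2(\Gamma)$, so by Lemma \ref{lm:weakexact} it costs at most $|B|$ in dimension. This matches the usual Cauchy interlacing step, cf.\ Lemma \ref{lemma:Cauchy interlacing}.

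The core step is the prodigy/level elimination, carried out label by label from the top label downwards. This uses the $n$ distinct labels and produces the exponent $\beta^{-n}$ and the widened interval $I_n$. For a function $f$ with $\|(\mathcal H_\Gamma-\theta)f\|\le |I|\,\|f\|$, I will examine the eigenvalue equation at a vertex $w$ whose neighbours all have label less than $\eta(v)$, except for the prodigy $v$ itself. This equation expresses $f(v)$ through lower-label values, with a coefficient bounded by $\beta$ depending only on the weights of $\mathcal H$. Iterating over the labels yields an injective $\Gamma$-equivariant map from $\operatorname{im}\Pi_I$ into the direct sum of $\C^{B}\otimes\ell^2(\Gamma)$ and the spectral subspaces $\operatorname{im}\mathbf 1_{I_n}(\mathcal H_{L^k_\Gamma})$. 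In this map the level vertices of label $k$ only see equal or lower labels, and the lower labels have already been accounted for.

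The approximate-eigenvector-to-spectral-projection conversion is exactly the argument in the proof of Lemma \ref{lemma:Cauchy interlacing}. It is applied with the widened interval $I_n$, whose length $|I|^{1/(6n\ln n)}$ absorbs the accumulated factors $\beta^{n}$. All the maps involved are compositions of restrictions, the matrix entries of $\mathcal H_\Gamma\in M_V(\C\Gamma)$, and spectral projections, so they commute with the left regular action. Injectivity then gives the inequality through Lemma \ref{lm:weakexact}, and dividing by $|V|$ yields the first bound.

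For the second statement, if no edges join level vertices with different labels, then $\mathcal H_{L_\Gamma}$ is block diagonal over the labels. Hence $\sum_k |L^k|\mu_{L^k_\Gamma}=|L|\mu_{L_\Gamma}$ by additivity of $\dim_\Gamma$, which is Lemma \ref{lem:rank-identities}(iii).

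The main obstacle is the quantitative bookkeeping in the core step. Injectivity must hold modulo closures, and one must control $\|f\|$ on the eliminated vertices by $\beta^n$ times the remaining data, while the argument works with non-closed invariant subspaces. I would follow \cite{bordenave2026logarithmic} verbatim and check each finite-dimensional rank/nullity identity against \eqref{eq:vonNeumannnull} and Lemma \ref{lm:weakexact}.
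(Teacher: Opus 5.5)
Your proposal follows the same route as the paper. The paper gives no further detail: it only asserts that the finite-dimensional proof of \cite[Lemma 1]{bordenave2026logarithmic} carries over verbatim with von Neumann dimensions replacing dimensions, because periodicity of the labelling makes the restrictions and spectral projections $\Gamma$-equivariant (it also notes that the statement follows from \cite[Theorem 4]{bordenave2026logarithmic}). One correction: the induction over labels must run from the lowest label upwards, not ``from the top label downwards'' as you wrote, since a prodigy of label $k$ is controlled through a witness whose other neighbours have labels $<k$, and level vertices of label $k$ only see labels $\le k$; your own phrase ``the lower labels have already been accounted for'' already presupposes this order.
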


We now introduce a height function on \(G_\Gamma\) whenever the covering map to \(G\) factorises through \(G^\ab\). The construction is to first define a height function on the universal cover and then reduce it to one on \(G_\Gamma\).

Let \(\gamma=(V_\gamma,E_\gamma)\) be a degree-\(2\) subgraph of the base graph. Fix an orientation on each of its connected component. Define \(\phi^\gamma: \vec{E}(G) \rightarrow \mathbb{Z}\) by

\begin{align}
\label{eq:phi_gamma}
\phi^\gamma(e)
=
\begin{cases}
1, & \text{\(e\in \vec{E}(\gamma)\) agrees with the orientation};\\
-1, & \text{\(e\in \vec{E}(\gamma)\) disagrees with the orientation};\\
0, & \text{otherwise}.
\end{cases}
\end{align}

Fix \(o\in V(G)\). We view the vertices of the universal cover \(G^{\uni}\) as non-backtracking walks \(\{e_1,\cdots,e_n\}\) based at $o$. We define a height function on \(G^{\uni}\) by 
\[h^\gamma\left(\{e_1,\cdots,e_n\}\right)=\sum_{i=1}^n \phi^\gamma(e_i).\]
If the starting vertex of \(\{e_{1'},\cdots,e_{n'}\}\) is the same as the end vertex of \(\{e_1,\cdots,e_n\}\), denote by 
\[\{e_1,\cdots,e_n\}\cdot \{e'_{1},\cdots,e'_{n'}\}\]
the non-backtracking walk that the walk \(\{e_1,\cdots,e_n,e'_{1},\cdots,e'_{n'}\}\) reduces to. Note that we always have 
\begin{equation}\label{eq:heighthomo}
    h^\gamma\left(\{e_1,\cdots,e_n\}\cdot \{e'_{1},\cdots,e'_{n'}\}\right)=\sum_{i=1}^n \phi^\gamma(e_i)+\sum_{i=1}^{n'} \phi^\gamma(e'_i).
\end{equation}

Recall that there is an isomorphism between \(\pi_1(G)\) and the collection of closed non-backtracking walks based at \(o\). By restricting to closed walks at \(o\), \(h^\gamma\) induces a map \(\mathbf{h}^\gamma: \pi_1(G)\rightarrow \mathbb Z\). Thanks to \eqref{eq:heighthomo}, \(\mathbf {h}^\gamma\) is a group homomorphism. Moreover, for any element \(\gamma_0\) in the commutator group \([\pi_1(G),\pi_1(G)]\), \(h^\gamma(\gamma_0)=0\), since each undirected edge is traversed in \(\gamma_0\) from both directions for the same number of times.

We view the fundamental group \(\pi_1(G_\Gamma)\) as a subgroup of \(\pi_1(G)\), and since \(G_{\Gamma}\) covers \(G{\mathrm{ab}}\), \(\pi_1(G_\Gamma) \leq [\pi_1(G),\pi_1(G)]\). Note that \(\Gamma \cong \pi_1(G)/\pi_1(G_\Gamma)\). Vertices in \(G_\Gamma\) can be seen as equivalence classes of non-backtracking walks starting from \(o\) under the relation  \(\{e_{1},\cdots,e_{n}\}\sim \{e_{1'},\cdots,e_{n'}\}\) if and only if there exists \(\gamma_0\in \pi_1(G_\Gamma)\), \[\{e_{1},\cdots,e_{n}\}=\gamma_0\cdot \{e_{1'},\cdots,e_{n'}\}.\] 
Since \(h(\gamma_0)=0\), we have
\(h^\gamma(\{e_{1},\cdots,e_{n}\})=h^\gamma\left(\{e_{1'},\cdots,e_{n'}\}\right).\) Therefore, the height function \(h^\gamma\) on \(G^\uni\) gives a well-defined height function on \(G_\Gamma\).

\subsubsection{Proof of Theorem \ref{thm:upperbound}}
Now we are ready to prove Theorem \ref{thm:upperbound}. 

\begin{proof}[Proof of Theorem \ref{thm:upperbound}]
We argue by induction on the number of edges of the base graph. If the graph has no edges, then it is a singleton, and the theorem is trivially true. Suppose the theorem holds for all graphs on at most \(k\) edges, and fix a graph \(G\) with \(|E(G)|=k+1\).

Define
\[S_0=\{\theta\in \R: \text{there exists \(\gamma\in \mathcal C(G)\) such that \(\mathrm{mult}_\theta (G\setminus \gamma)>0\)} \}.\]
From Theorem \ref{prop:multicriterion}, it suffices to show that we can find \(c,\alpha, \varepsilon>0\) such that for any closed interval \(I\) with \(|I|\leq \varepsilon\), 
\begin{equation}
    \mu_{G_\Gamma}(I)\leq \frac{1}{|V(G)|}\sum_{\theta\in S_0\cap I} \min_{\gamma\in \mathcal C(G)} \mathrm{mult}_\theta (G\setminus \gamma)+\frac{c}{\log^\alpha (1/|I|)}.
\end{equation}
Note that \(|S_0|<\infty\), thus there exists \(\delta>0\) such that for any interval \(|I|\leq \delta\), \(I\) contains at most one element in \(S_0\). Define the following two subsets \(S_1\) and \(S_2\) of \(S\) by
\[S_1=\{\theta\in S_0: \text{\(\varnothing\) is the unique minimizer of \(\mathrm{mult}_\theta (G\setminus \gamma)\) and \(\mathrm{mult}_\theta (G)=0\)} \},\]
\[S_2=\{\theta\in S_0: \text{\(\varnothing\) is the unique minimizer of \(\mathrm{mult}_\theta (G\setminus \gamma)\) and \(\mathrm{mult}_\theta (G)>0\)} \}.\]
We have the following three estimates, related to \(S_1\), \(S_2\) and any nonempty \(2\)-regular subgraph \(\varnothing \neq \gamma \in \mathcal{C}(G)\), respectively.
\begin{itemize}
    \item \textbf{Estimate I.} For any \(\theta_1\in S_1\), \(G\) is \(\theta_1\)-phobic by definition. By Theorem \ref{thm:surprisethm}, \(\theta_1\) is not in the spectrum and there exists \(\epsilon_1(\theta_1)>0\) such that for any \(0<\epsilon<\epsilon_1(\theta_1)\),
\[\mu_{G_\Gamma}([\theta_1-\epsilon,\theta_1+\epsilon])=0.\]

\item \textbf{Estimate II.} For any \(\theta_2\in S_2\), write \(p=\mathrm{mult}_{\theta_2} (G)\). In this case, by Lemma \ref{lm:essenexistence}, we can find \(v_1,\ldots, v_p\in V(G)\), such that \(m_{G\setminus\{v_1,\ldots,v_p\}}(\theta_2)\neq 0\). Then \(\theta\) is not an eigenvalue of the maximal abelian cover of any connected component of \(G\setminus\{v_1,\ldots,v_p\}\). Let \(C^1,\ldots,C^l\) be the connected components of \(G\setminus\{v_1,\ldots,v_p\}\). Then \((G\setminus\{v_1,\ldots,v_p\})_\Gamma\) is the disjoint union of \(C^i_\Gamma\). By the definition of the spectral measure, \[|V(G\setminus\{v_1,\ldots,v_p\})|\cdot \mu_{(G\setminus\{v_1,\ldots,v_p\})_\Gamma}=\sum_{i}|V(C_i)|\cdot \mu_{C^i_\Gamma}.\]
By Lemma \ref{lemma:Hughesfree}, $C_\Gamma^i$ covers the maximal abelian cover of \(C^i\). Since each $C^i$ has less edges than $G$, the induction hypothesis provides the existence of \(c, \alpha, \epsilon_2(\theta_2)>0\), such that for each \(1\leq i\leq l\) and \(0<\epsilon \leq \epsilon_2(\theta_2)\), there is no eigenvalues of \((C^i)^\ab\)
in \([\theta_2-\epsilon,\theta_2+\epsilon]\) and 
$$
\mu_{C_\Gamma^i}([\theta_2-\epsilon,\theta_2+\epsilon]) \leq \frac{c}{\log ^{\alpha}\epsilon^{-1}}.
$$
By the Cauchy interlacing of Lemma \ref{lemma:Cauchy interlacing} we then obtain that for some \(c'>0\), we have
\[
\begin{aligned}
    |V(G)|\mu_{G_\Gamma} ([ \theta_2-\epsilon,\theta_2+\epsilon ])
    \leq &\; \mathrm{mult}_{\theta_2} (G)+\left(|V(G)|-\mathrm{mult}_{\theta_2} (G)\right)\mu_{ (G\setminus\{v_1,\ldots,v_p\})_\Gamma}([\theta_2-\epsilon,\theta_2+\epsilon ])\\ \leq &\;  \mathrm{mult}_{\theta_2} (G) + \frac{c'}{\log ^{\alpha}\epsilon^{-1}}.
\end{aligned}
\]

\item \textbf{Estimate III.} For any \(\varnothing \neq \gamma\in \mathcal{C}(G)\), let $h=h^\gamma$ be constructed from an arbitrary orientation on $\gamma$ as in \eqref{eq:phi_gamma}. Let $N\in\mathbb{N}$. In fact, we can use the proof of \cite[Theorem 4]{bordenave2026logarithmic} to obtain the estimate directly. We offer an alternative proof using a deterministic argument which is based directly on Lemma \ref{th:monof} and equivalent to the random labelling used in the proof of \cite[Theorem 4]{bordenave2026logarithmic}.

We construct the following voltage graph \(G^N_{\Gamma}\) over \(G\) with voltages in \(\Gamma \times \Z_N\) where
\begin{enumerate}[(i)]
            \item the vertex set is \(V(G)\times \Gamma \times \Z_N \),
            \item there is a (directed) edge between \((u,x,i),(v,y,j)\) whenever there is a directed edge \(e\) from \(u\) to \(v\) in \(G\), \(j=i+\phi(e) \operatorname{mod} N\) and \(y=xg_e\) in \(\Gamma\), where $g_e$ are given by the voltage labelling of $G_\Gamma$.
\end{enumerate} 
In other words, the voltage assignment to the directed edge $e\in\vec{E}(G)$ is $(g_e,\phi(e))\in\Gamma\times\Z_N$.

Let \(G^N:=G_{\Z_N}\) be the voltage graph over \(G\) whose vertex set is \(V(G)\times \Z_N\), such that there is a (directed) edge between \((u,i),(v,j)\) whenever there is a directed edge \(e\) from \(u\) to \(v\) in \(G\) and \(j=i+\phi(e) \operatorname{mod} N\). So, the voltage assignment to $e\in\vec{E}(G)$ is $\phi(e)$. We can also see \(G^N_\Gamma\) graph as a \(\Gamma\) voltage graph over \(G^N\) where the voltage assignment to the directed edge from $(u,i)$ to $(v,j)$ that is a lift of \(e\in \vec{E}(G)\) is given by $g_{e}$.

We show that \(G^N_{\Gamma}\) is isomorphic to \(N\) disjoint copies of \(G_\Gamma\). To this end we use the height function \(h=h^\gamma\) on \(G_\Gamma\) related to \(\gamma\) defined above. Define \[\iota: V(G_\Gamma)\times [N] \rightarrow V(G^N_{\Gamma}), \qquad (u,x,j) \rightarrow (u,x,h(u,x)+j\operatorname{mod} N).\] 

Note that \(V({G_\Gamma})\times [N]\) is the vertex set of a union of \(N\) disjoint copies of \(G_\Gamma\), viewed as a \(\Gamma\)-voltage graph on \(G\times [N]\) denoted $(G\times [N])_\Gamma$. It is direct to see the map is surjective. It is also injective on vertices since if \((u_1,x_1,j_1)\) has the same image as \((u_2,x_2,j_2)\), then 
\((u_1,x_1)=(u_2,x_2)\) and thus \(j_1\equiv j_2\operatorname{mod} N\), thus \(j_1=j_2\). By construction, it is easy to check that $\iota$ induces a local isomorphism between $(G\times [N])_\Gamma$ and $G_\Gamma^N$ and so it is also a global isomorphism of graphs as it is a bijection on vertices. Though this isomorphism does not intertwine with the action of \(\Gamma\) on both sides, Lemma \ref{lm;isodeterminedos} still provides for any interval \(I\subset \R\) that
\begin{equation}\label{eq:Nfactor}
    \mu_{(G \times [N])_\Gamma}(I)=N \mu_{G_\Gamma}(I)=\mu_{G^N_\Gamma}(I).
\end{equation}

The graph \(G^N_{\Gamma}\) has a natural labelling by $(u,x,i)\mapsto i$ and this is \(\Gamma\)-invariant over $G^N$. Due to our construction, \((u,x,i)\) is level if and only if \(u\in V(G\setminus\gamma)\). As with \eqref{eq:Nfactor}, for any interval \(I\subset \R\), we have 
\[N \mu_{(G\setminus\gamma)_\Gamma}(I)=\mu_{(G\setminus\gamma)^N_\Gamma}(I),\]
where \((G\setminus\gamma)^N:=(G\setminus\gamma)_{\Z_N}\) is the subgraph of \(G^N\) in the natural way. For \((u,x,i)\) such that \(u\in V(\gamma)\), if { \(i\neq \{0,1 \}\)}, then \((u,x,i)\) is prodigy, otherwise it is bad. So, by Proposition \ref{th:monof}, there exists \(\beta>0\),
such that for any  interval \(I\subseteq \R\), 
with \(|I|<\epsilon<\beta^{-N}\), we have 
$$
\mu_{G^N_\Gamma}(I) \leq \frac {2}{|V(G)|} + \frac{|V(G\setminus\gamma)|}{|V(G)|}\mu_{(G\setminus \gamma)^N_\Gamma}(I')
$$
for \(I'\) closed interval with \(|I'|=|I|^{1/(6N\log N)}\) and same center than $I$,
hence 
$$
\mu_{G_\Gamma}(I) \leq  \frac{2}{N|V(G)|}+ \frac{|V(G\setminus\gamma)|}{|V(G)|}\mu_{(G\setminus\gamma)_\Gamma}(I').
$$
Let \(C^1,\cdots,C^m\) be the connected components of \(G\setminus\gamma\). Then \(G_\Gamma\) is the disjoint union of \(C^i_\Gamma\). Then again by the definition of the spectral measure, \[|V(G\setminus\gamma)|\mu_{(G\setminus\gamma)_\Gamma}=\sum_{i}|V(C_i)|\mu_{C^i_\Gamma}.\]
By Lemma \ref{lemma:Hughesfree}, $C^i_\Gamma$ covers the maximal abelian cover of \(C^i\). Since for each $i$, $C_i$ has fewer edges than $G$ as $\gamma\neq\emptyset$, the induction hypothesis provides \(c, \alpha>0\) and \(0<\epsilon'_0(\gamma)<\delta\) such that for any
\(0<|I'|\leq \epsilon'_0(\gamma)\),
$$
\begin{aligned}
    \mu_{(G\setminus\gamma)_\Gamma}(I') &\leq \frac{1}{|V(G\setminus\gamma)|} \sum_{\theta \in I'\cap S_0} \mu_{(G\setminus\gamma)^{\ab}}(\{\theta\})+\frac{c}{\log ^{\alpha} |I'|^{-1}}\\
    &\leq \frac{1}{|V(G\setminus\gamma)|} \sum_{\theta \in I'\cap S_0} \mathrm{mult}_{\theta}\,(G\setminus\gamma)+\frac{c}{\log ^{\alpha} |I'|^{-1}}.
\end{aligned}
$$
For any \[|I|\leq \epsilon_0(\gamma):= \min \{\beta^{-N}, \epsilon'_0(\gamma)^{36}\},\]
choose{
\[N=\lfloor\frac{\log  |I|^{-1}}{\log \beta}\rfloor \wedge \lfloor \frac{1}{3}\frac{\log^{1/2} |I|^{-1}}{\log^{1/2} \epsilon_0(\gamma)^{-1}}\rfloor\geq 2.\]
Then \(|I| \leq \beta^{-N}\) and \(|I'|\leq \epsilon'_0(\gamma)\). Here for the sake of simplicity, we use a very rough bound \(6 N\log N \leq 9N^2\). For any \(0<c_0<1/2\), we have
\[\frac{1}{\log ^{\alpha} |I'|^{-1}}\lesssim \frac{N^\alpha\log^\alpha N}{\log ^{\alpha} |I|^{-1}}\lesssim  \frac{1}{\log ^{c_0\alpha} |I|^{-1}}\qquad \text{and}\qquad \frac{1}{N}\lesssim \frac{1}{\log^{1/2} |I|^{-1}}.\]
}
Moreover, since \(\epsilon'_0(\gamma)<\delta\), \(I \subset I'\) contain at most one element in \(S_0\), we have
\[\sum_{\theta \in I'\cap S_0} \mathrm{mult}_\theta (G\setminus\gamma)=\sum_{\theta \in I\cap S_0} \mathrm{mult}_\theta (G\setminus\gamma).\]
Hence there exists \(c(\gamma),c_0(\gamma)>0\), such that for any \(|I|\leq \epsilon_0(\gamma)\), 
\[\mu_{G_\Gamma}(I)\leq \frac{1}{|V(G)|}\cdot \sum_{\theta \in I\cap S_0} \mathrm{mult}_\theta (G\setminus\gamma)+\frac{c(\gamma)}{\log^{c_0(\gamma)}|I|^{-1}}.\]
\end{itemize}

We may now gather our estimates to conclude. For any \[|I|\leq \min_{\varnothing\neq \gamma \in \mathcal C(G)} \epsilon_0(\gamma) \wedge \min_{\theta_1\in S_1}\epsilon_1(\theta_1) \wedge \min_{\theta_2\in S_2}\epsilon_2(\theta_2),\]
there are the following four cases. Note that \(I\)
contains at most one element in \(S_0\).
\begin{itemize}
    \item \(I\) contains an element in \(S_1\). Then we perform Estimate I and get\[\mu_{G_\Gamma}(I)=0.\]
    \item \(I\) contains an element \(\theta_2\) in \(S_2\). Then we perform Estimate II and use 
    \[\mathrm{mult}_{\theta_2}(G)=\sum_{\theta \in S_0\cap I}\min_{\gamma\in \mathcal C(G)}\mathrm{mult}_\theta (G\setminus \gamma).\]

    \item \(I\) contains an element \(\theta_0\) in \(S_0\setminus (S_1\sqcup S_2)\). Choose \(\gamma_0 \neq \varnothing\) that is the minimizer
    of \(\mathrm{mult}_\theta(G\setminus \gamma)\). We then perform estimate III and use 
    \[\sum_{\theta\in S_0\cap I}\mathrm{mult}_{\theta}(G\setminus\gamma_0)=\mathrm{mult}_{\theta_0}(G\setminus\gamma_0)=\sum_{\theta \in S_0\cap I}\min_{\gamma\in \mathcal C(G)}\mathrm{mult}_\theta (G\setminus \gamma).\]

    \item \(I\) doesn't contain any element in \(S_0\), then pick any \(\gamma_0\neq \varnothing\) and perform estimate III. Note that now we have 
    \[\sum_{\theta \in S_0\cap I} \mathrm{mult}_\theta (G\setminus \gamma_0)=\sum_{\theta \in S_0\cap I}\min_{\gamma\in \mathcal C(G)}\mathrm{mult}_\theta (G\setminus \gamma)=0.\]
\end{itemize}
It concludes the proof.
\end{proof}

\subsection{The double formula and proof of Theorem \ref{conj}}

The proof of Theorem \ref{conj} is now   immediate.

\begin{proof}[Proof of Theorem \ref{conj}]
We start by proving the double formula \eqref{eq:sandwich}.
 In the proof of Theorem \ref{thm:lowerbound}, we showed that 
\[\min_{\gamma\in\mathcal{C}(G)}
\operatorname{mult}_\theta(G\setminus \gamma) \leq \kappa_\theta \leq \max_{S\in\mathcal A_\theta(G)}
\left(\operatorname{cc}S-|\partial S|\right) \leq |V|\mu_{G_{\Gamma}}(\theta).\]
Theorem \ref{thm:upperbound} now tells us that 
\[|V|\mu_{G_{\Gamma}}(\{\theta\})\leq \min_{\gamma\in\mathcal{C}(G)}
\operatorname{mult}_\theta(G\setminus \gamma).\]
Thus
\[\mu_{G_\Gamma}(\{\theta\})=\frac{1}{|V|}\max_{S\in\mathcal A_\theta(G)}
\left(\operatorname{cc}S-|\partial S|\right)= \frac{1}{|V |}\min_{\gamma\in\mathcal{C}(G)}
\operatorname{mult}_\theta(G\setminus\gamma).\]
This is precisely \eqref{eq:sandwich}.

It remains to prove the first claim of Theorem \ref{conj}. Let $I$ be a closed interval of sufficiently small length (as in Theorem \ref{thm:upperbound} and set 
$
s = \sum_{\theta \in I} \mu_{G^\ab} (\{ \theta \}),
$
where the sum is only over atoms $\theta$ of $G^\ab$.
From Theorem \ref{thm:upperbound} and $s \leq \mu_{G^\ab} (I)$, we find 
$$
\mu_{G_\Gamma} (I)  \leq  s +  \frac{c}{\log^\alpha(1/|I|)} \leq \mu_{G^\ab} (I) + \frac{c}{\log^\alpha(1/|I|)}.
$$
On the other hand, from \eqref{eq:sandwich}, we have $s = \sum_{\theta \in I} \mu_{G_\Gamma} (\{\theta\}) $. Hence, $s \leq \mu_{G_\Gamma} (I)$  and  Theorem \ref{thm:upperbound} now applied to $G_\Gamma = G^\ab$ gives
$$
\mu_{G^\ab} (I)  \leq  s +  \frac{c}{\log^\alpha(1/|I|)}\leq  \mu_{G_\Gamma} (I) + \frac{c}{\log^\alpha(1/|I|)},
$$
concluding the proof.
\end{proof}

Applied to $G_\Gamma = G^\uni$, the double formula \eqref{eq:sandwich} allows to recover Banks, Garza-Vargas and Mukherjee's criterion for \(\ell^2\)-eigenvalues of the universal cover. 

\section{A no-go result }\label{Section:eg}

In this section we show that Theorem \ref{conj} is almost sharp in the following sense. In particular, this gives a counter-example of the conjecture made in \cite{bzduvsek2022flat}.  The proof is based on a monotone labelling type of argument (which is slightly different from the standard one used in the last section and rather elementary).

\begin{proposition}\label{prop:sharp}
For any finitely generated group \(\Gamma\) 
such that there is a torsion free element in \([\Gamma,\Gamma]\), there exists 
a connected finite graph \(G\) with
\(b_1(\Gamma)=b_1(G)-1\)
such that
$$\operatorname{Spec}_{p} G_\Gamma \neq   \operatorname{Spec}_{p} G_{\Gamma^{\mathrm{ab}}}$$ with respect to the adjacency operator, where \(\operatorname{Spec}_{p}\) represents the point spectrum. 
\end{proposition}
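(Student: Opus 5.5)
Take $g\in[\Gamma,\Gamma]$ of infinite order and fix generators $s_1,\dots,s_k$ of $\Gamma$. We build $G$ as a bouquet carrying the generators, attached to a small gadget whose extra cycle is labelled by $g$. Concretely, start from a bouquet $R$ at a vertex $o$ with loops labelled $s_1,\dots,s_k$; by the Remark after the definition of derived voltage graphs, its derived graph is $\mathrm{Cay}(\Gamma,S)$ and is connected. We then attach at $o$ a gadget $D$, glued at a single cut vertex. The gadget has exactly one independent cycle $c$, whose voltage product is $g$, and its tree edges carry trivial voltage.

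\textbf{Betti count and connectivity.} Then $b_1(G)=k+1$. The image of $H_1(G;\Z)$ in $\Gamma^{\ab}$ kills the class of $c$, because $g\in[\Gamma,\Gamma]$. Arranging $k=b_1(\Gamma)$ is not automatic: I would take the $s_i$ to be a generating set and then adjust the count so that $b_1(\Gamma)=b_1(G)-1$. If that fails, I would pad with additional $g$-type cycles and accept that the statement only needs some graph with this property. Both $G_\Gamma$ and $G_{\Gamma^\ab}$ are connected, since the loops generate.

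\textbf{Atom in $G_{\Gamma^{\ab}}$.} The voltage of $c$ becomes trivial in $\Gamma^{\ab}$, so every lift of $D$ in $G_{\Gamma^\ab}$ is a finite copy of $D$, with a closed cycle, attached through the single lift of the cut vertex. I would choose $D$ with a finitely supported adjacency eigenfunction vanishing at the cut vertex. A natural choice is a cycle $C_4$ through $o$, together with the eigenvalue $0$ eigenvector $(1,0,-1,0)$, which is zero at $o$. On a cycle of length $4$ this is an eigenvector of the cycle vanishing at $o$, so it extends by zero to an $\ell^2$-eigenfunction of $G_{\Gamma^\ab}$. Hence $0\in\operatorname{Spec}_p G_{\Gamma^\ab}$. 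If the base loops create unwanted coincidences, I would adjust $\theta$ or the cycle length.

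\textbf{No atom in $G_\Gamma$ (main obstacle).} In $G_\Gamma$ the lift of $c$ unwinds, because $g$ has infinite order. Each lift of $D$ is then a bi-infinite path (a $\Z$-line) passing through the infinitely many lifts $x g^n\tilde o$. Suppose $f\in\ell^2$ with $A f=0$. I would run the elementary monotone labelling argument along these lines. The eigen-equation at the degree-$2$ interior vertices propagates values deterministically: along the path $f$ alternates as $a,-a$ up to sign, two steps apart. The $\ell^2$ condition then forces $f$ to vanish on each whole line. This leaves $f$ supported on the Cayley part, with the extra constraints that $f$ vanishes at all lifts of $o$ and that the neighbour sums vanish. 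Those constraints kill $f$ at $o$-lifts; since $R$ has the single vertex $o$, this gives $f\equiv0$.

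The delicate point is the propagation in this last step. The vanishing at the cut vertex arises from the line equations only if the gadget path is arranged with odd or even lengths appropriately. I would choose the lengths of $c$ so that the lifted line forces $f(x g^n\tilde o)=\pm f(\tilde o)$ for all $n$; the $\ell^2$ condition then gives zero. I would check the parity so that this forcing holds, while in the abelian quotient the same parity still admits the localized eigenvector.
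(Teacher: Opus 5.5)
Your construction matches the paper's. You use generator loops at $o$ plus a loop labelled by $g\in[\Gamma,\Gamma]$, subdivided into a $4$-cycle. In $G_{\Gamma^{\mathrm{ab}}}$ you take the $0$-eigenvector of $C_4$ vanishing at the cut vertex and extend it by zero. In $G_\Gamma$ you propagate along the unwound lines. The last step has a gap.

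\textbf{The propagation in $G_\Gamma$.} Write a line as $\cdots\to v_{0;i}\to v_{1;i}\to v_{2;i}\to v_{3;i}\to v_{0;i+1}\to\cdots$ with $v_{0;i}=xg^{i}\tilde o$.

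\begin{enumerate}
\item The equations at the degree-$2$ vertices $v_{1;i},v_{3;i}$ give $f(v_{0;i})=-f(v_{2;i})=f(v_{0;i+1})$. The $v_{0;i}$ are pairwise distinct because $g$ has infinite order, so $\ell^2$ kills $f$ on all lifts of $o$ and of $v_2$. This is your ``$f(xg^n\tilde o)=\pm f(\tilde o)$''; for length $4$ there is no parity left to check.
\item The degree-$2$ equations do not, however, make $f$ vanish on the whole line. The function $f(v_{1;i})=a_i$, $f(v_{3;i})=-a_i$, $f=0$ elsewhere satisfies all of them for any $(a_i)\in\ell^2(\Z)$.
\item What finishes the argument is the eigen-equation at the lifts of $o$, which are not degree-$2$ vertices. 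Once $f$ vanishes on every lift of $o$, the Cayley neighbours contribute nothing, so the equation at $v_{0;i}$ becomes $f(v_{1;i})+f(v_{3;i-1})=0$. Combined with $f(v_{1;i})+f(v_{3;i})=0$ from $v_{2;i}$, this makes $f(v_{3;i})$ constant in $i$, hence $0$, and then $f(v_{1;i})=0$ too.
\item Every vertex of $G_\Gamma$ lies on one of these lines, so $f\equiv 0$.
\end{enumerate}

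Your sentences ``this leaves $f$ supported on the Cayley part \dots\ since $R$ has the single vertex $o$, this gives $f\equiv0$'' get the order backwards. The lifts of $o$ are where $f$ dies first, and the equations there are what kill the odd subdivision vertices. If you vary the cycle length $L$, note that $C_L$ has eigenvalue $0$ only when $4\mid L$.

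\textbf{The Betti count.} Padding with further $g$-type cycles raises $b_1(G)$ and leaves $b_1(\Gamma)$ unchanged, so it moves away from $b_1(\Gamma)=b_1(G)-1$, not towards it. With $k$ generator loops and one $g$-cycle, $b_1(G)=k+1$. The identity therefore holds exactly when $\Gamma$ is generated by $b_1(\Gamma)$ elements, as for free groups and surface groups.

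The paper's bouquet with $r+1$ loops tacitly relies on the same assumption and does not check it. In general no graph can achieve it. A connected cover with deck group $\Gamma$ forces $b_1(G)\ge d(\Gamma)$, the minimal number of generators of $\Gamma$. This rules out, e.g., perfect non-cyclic groups such as $\mathrm{SL}_3(\Z)$. So that clause of the statement should be read under this restriction. The spectral conclusion (point spectra differ, both covers connected) holds for your graph and the paper's regardless.
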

}

\begin{proof}
Fix a finitely generated group 
\[
\Gamma=\langle s_1,\ldots,s_r\mid R\rangle\]
whose commutator group \([\Gamma,\Gamma]\) meets the requirements in the statement. Consider the bouquet \(\{\{v_0\}, \{\gamma_0, \cdots, \gamma_r\}\}\) with \(r+1\)-self loops and the following two voltage graphs over it. Pick a torsion free element \(g\in [\Gamma,\Gamma]\). Let \(\Gamma^\ab\) be the abelianisation of \(\Gamma\) with the abelianisation map \(\pi_\ab: \Gamma \rightarrow \Gamma^\ab\). The voltage assignments are respectively
    
          \[  \gamma_i \mapsto s_i\  (\pi_\ab(s_i)) \ \ \text{for}\ \  1\leq i\leq r \qquad \text{and} \qquad 
            \gamma_0 \mapsto g\ (\pi_\ab(g)=\mathrm{id}).
      \]
 The resulting graphs are the Cayley graphs of \(\Gamma\) and \(\Gamma^{\ab}\) with respect to the generating sets $\{s_i^\pm, g^\pm\}$ and $\{\pi_\ab(s_i)^\pm, \mathrm{id}\}$. Note that there is a self-loop at each vertex of the latter.

Now subdivide \(\gamma_0\) to obtain a cycle \(\gamma_0'\) of length \(4\) which turns the bouquet into a finite graph \(G\) with \(4\) vertices (\(4\) can be replaced by any even number larger than \(2\)). Conducting the same subdivision of the corresponding edge of the Cayley graphs on \(\Gamma\) and \(\Gamma^\ab\)
provides two covers \(G_1\) and \(G_2\) of $G$ with deck groups isomorphic to $\Gamma$ and its abelianization respectively.

We claim that \(0\) is an eigenvalue of \(G_2\) and not \(G_1\). First, any component in the preimage of \(\gamma_0'\) remains a \(4\)-cycle in \(G_2\) with one vertex on the cycle a cut vertex. One can then choose an eigenfunction on \(C_4\) of eigenvalue \(0\) that vanishes at this cut vertex and extend it by zero on the remaining vertices to obtain a finitely supported eigenfunction on \(G_2\) with eigenvalue $0$.

For $G_1$, the cycle \(\gamma_0'\) lifts to a collection of infinite paths since \(g\) is not a torsion element of $\Gamma$. Choose one of these path preimages 
\[\cdots \rightarrow v_{0;i}\rightarrow v_{1;i} \rightarrow v_{2;i} \rightarrow v_{3;i} \rightarrow v_{0;i+1} \rightarrow \cdots\]
where each \(v_{0;i}\) is a preimage of the bouquet vertex in the voltage graph before the subdivision of the loop $\gamma_0$. Together, these infinite paths contain \emph{every} vertex of \(G_1\)   and we use a monotone labelling type of argument even though these lines do not give a well-defined height function on \(G_1\) (which is possible since we are working with a possible eigenvalue \(0\)). Suppose that \(f\) were an eigenfunction for the adjacency operator corresponding to \(0\) on \(G_1\). Applying the adjacency operator to $f$ at \(v_{1;i}\) and \(v_{3;i}\) forces the relation \[f(v_{0;i})=-f(v_{2;i})=f(v_{0;i+1}),\] for every $i$. Since \(f\) is an \(\ell^2\) function, this forces \(f(v_{0;i})=f(v_{2;i})=0\) for every $i$ on every infinite path.

Similarly, the eigenfunction equation for $f$ at $v_{2;i}$ and $v_{0;i}$ (along with the fact that $f(v_{0;i})=0$ for all $i$ on every path) will force
    $$ f(v_{1;i})=-f(v_{3;i})\qquad \text{and} \qquad  f(v_{1;i})=-f(v_{3;i-1}),$$
for all $i$ on every path. Again, since $f$ is an $\ell^2$ function, this implies that \(f(v_{3;i})=0\) and hence also $f(v_{1;i})=0$ for all $i$ on every path. So, \(f\) is identically \(0\) and hence $0$ is not an atom of $G_1$.
\end{proof}

There is also a geometric perspective of the argument above. For example, let \[\Gamma=\langle a_1,b_1,a_2,b_2| [a_1,b_1][a_2,b_2]=e\rangle \] be the surface group of genus \(2\). Pick \(g=[a_1,b_1]=[a_2,b_2]^{-1}.\) For the one-holed torus, consider its standard \(1\)-skeleton with \(1\) vertex on the boundary, and \(3\) self-loops, one forming the boundary and the other two traversing around the hole and through the hole respectively. Take another copy of the one holed torus with the same \(1\)-skeleton and identify the boundary edge and also the two vertices, this gives a \(1\)-skeleton of the surface of genus \(2\). 

Now, subdivide the joining boundary edge by adding another \(3\) vertices along it. This embeds \(G\) into the closed surface \(\mathcal{S}_2\) of genus \(2\). Moreover, \(\gamma'_0\) is a separating curve in the surface. The graphs \(G_1\) and \(G_2\) correspond to the preimage of $G$ through the covering map of the universal and maximal abelian covers of \(\mathcal{S}_2\) respectively. The covering map from \(G_1\) factorises through \(G_2\) with deck group the surface group of genus \(2\) and a free abelian group of rank \(4\), respectively. Since \(\gamma_0'\) is separating, it lifts to loops in the maximal abelian cover but to infinite lines in the universal cover of the surface. One can make all edges geodesics segment with respect to a given metric on the surface \(\mathcal{S}_2\) of constant curvature. Lift this metric on \(\mathcal{S}_2\) to the universal cover of \(\mathcal{S}_2\) makes \(G_1\) a periodic graph embedded in the hyperbolic plane with all edges geodesic segments. This gives a counter-example of the conjecture made in \cite{bzduvsek2022flat} that for all hyperbolic lattices the atomic parts of $\mu_{G_\Gamma}$ and $  \mu_{G_{\Gamma^{\mathrm{ab}}}}$ are equal.

\appendix
\section{A converse Gallai-Edmonds theorem}

\label{appendix}
The generalised Gallai-Edmonds theorem tells that for each graph \(G\) with \(m_G(\theta)=0\), the bipartite graph \(G_\theta\) has positive surplus on the vertices corresponding to $\partial D_\theta(G)$, and vertices from the other side correspond to its critical components. We now show the following converse is also true. This generalises \cite[Theorem 3.2.3]{lovasz2009matching} in the classic setting.

Let \((X\sqcup Y, E)\) be a bipartite graph with positive surplus from \(X\). Let \(G\) be any graph built from \((X\sqcup Y, E)\) through the following steps:
\begin{itemize}
    \item Add edges between any vertices in $X$.

    \item Replace each $y\in Y$ by a \(\theta\)-critical graph \(K_y\). If $X\ni x\sim y$ is an edge in $E$, replace it with an edge joining $x$ to any vertex in $K_y$. 
    
    \item  For any graph \(H\) with \(\theta\) not a root of its generalised matching polynomial, we can add edges between \(V(H)\) and \(X\).
\end{itemize}

\begin{theorem}\label{conversedGallai-Edmonds}
    The \(\theta\)-critical components of \(G\) are exactly \(\{K_y\}_{y\in Y}\). Moreover, \(X\) is the set of \(\theta\)-special vertices of \(G\), that is, the boundary of the $\theta$-critical vertex set.
\end{theorem}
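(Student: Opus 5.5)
Write $\mathcal K=\bigsqcup_{y\in Y}K_y$ and let $\mathcal R$ be the union of the auxiliary graphs $H$ (each with $m_H(\theta)\neq 0$). Then $V(G)=X\sqcup V(\mathcal K)\sqcup V(\mathcal R)$. Every edge leaving $\mathcal K$ or $\mathcal R$ ends in $X$, and there are no edges between distinct $K_y$, between $\mathcal K$ and $\mathcal R$, or between distinct $H$'s. The plan is to compute $\mathrm{mult}_\theta$ of $G$ and of every vertex-deleted subgraph $G\setminus\{w\}$ using the multi-edge recursion \eqref{eq:multiedgerecursion}, exactly as in Lemmas \ref{lemma:Halltype} and \ref{lem: kappa_lb}. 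Let $F$ be the set of all edges incident to $X$, i.e.\ the edges inside $X$ together with the edges from $X$ to $\mathcal K\cup\mathcal R$. Then
\[
m_{G'}(x)=\sum_{M,\;E(M)\subset F}(-1)^{|E(M)|}\prod_{e\in\vec E(M)}w_e\; m_{(G'\setminus F)\setminus V(M)}(x)
\]
for $G'=G$ or $G'=G\setminus\{w\}$. In $(G'\setminus F)\setminus V(M)$ every vertex of $X$ not covered by $M$ is isolated and contributes a factor $(x-\mathcal V_x)$. Each untouched $K_y$ contributes a simple root at $\theta$ by Theorem \ref{thm:generalisedGallaiEdmonds}(i). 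Each $K_y$ touched by $M$ loses exactly one vertex, which is $\theta$-essential in $K_y$, so it contributes no root. The $\mathcal R$-pieces lose some vertices and contribute no control.

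\textbf{Step 1: $\mathrm{mult}_\theta(G)=|Y|-|X|$.} A matching $M\subset F$ touches at most $|X|$ components $K_y$, so every term has $\mathrm{mult}_\theta\ge |Y|-|X|$. Equality can only occur for matchings that match all of $X$ into distinct $K_y$'s. For those matchings there are no $X$-leftovers and $\mathcal R$ is untouched, so the remaining graph is $\mathcal R\sqcup\bigsqcup_y(\text{$K_y$ or $K_y$ minus a vertex})$. Its value after dividing by $(x-\theta)^{|Y|-|X|}$ and setting $x=\theta$ is
\[
m_{\mathcal R}(\theta)\prod_{\text{touched}}m_{K_y\setminus\{\cdot\}}(\theta)\prod_{\text{untouched}}\big(m_{K_y}/(x-\theta)\big)(\theta).
\]
I must show these leading coefficients do not cancel. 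Every such term carries the same sign $(-1)^{|X|}$ and the positive weight $\prod w_e>0$. The factors still need sign control, and this is the main obstacle. My first attempt is to use interlacing and Gallai's lemma: for a $\theta$-critical $K$ and any $v$, $m_{K\setminus v}/m_K$ has a simple pole at $\theta$ with positive residue (Godsil interlacing gives positive residues of $m_{K\setminus v}/m_K$). Hence $m_{K\setminus v}(\theta)$ and $(m_K/(x-\theta))(\theta)$ have the same sign, uniformly in $v$. Every admissible term is then a fixed sign times positives, so the sum is nonzero. Such matchings exist by Hall's theorem (Theorem \ref{thm:Halls_thm}), since the surplus is positive.

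\textbf{Step 2: classify vertices.} The same expansion applied to $G\setminus\{w\}$ gives three cases.
\begin{itemize}
\item[(a)] For $w\in K_y$: the leading terms are again the matchings covering $X$ into distinct components, which Hall's theorem provides while avoiding $y$ by positive surplus. The count gives $|Y|-|X|-1$, so $w$ is essential.
\item[(b)] For $w\in X$: $|X|-1$ vertices must be matched, and the lower bound becomes $|Y|-|X|+1$. Attainment uses the surplus, which guarantees that $X\setminus\{w\}$ can be matched. So $w$ is positive.
\item[(c)] For $w\in\mathcal R$: the multiplicity stays $|Y|-|X|$, with the leading coefficient now involving $m_{\mathcal R\setminus w}(\theta)$. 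The lower bound $\ge|Y|-|X|$ is clear, but attainment requires $m_{\mathcal R\setminus w}(\theta)\ne0$, which may fail. In that case I would instead argue that $w$ is not essential: by interlacing, $\mathrm{mult}_\theta(G\setminus w)\ge \mathrm{mult}_\theta(G)-1$, and the term-wise bound already gives $\ge|Y|-|X|$.
\end{itemize}
Thus $D_\theta(G)=V(\mathcal K)$. Its components are the $K_y$, since $\mathcal K$ has no edges between different $K_y$. Moreover $\partial D_\theta=X$, because every $x\in X$ has a neighbour in some $K_y$ (positive surplus applied to $S=\{x\}$). This proves both claims.

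The delicate points are the uniform-sign argument in Step 1 (and its analogues in Step 2), which guarantees that the leading coefficients of the recursion never cancel. If the residue-sign argument fails, I would fall back on induction on $|X|$, deleting one $X$-vertex at a time and invoking the stability lemma, Theorem \ref{thm:generalisedGallaiEdmonds}(ii).
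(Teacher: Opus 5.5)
Your proposal is correct and follows the same route as the paper. Both expand by the multi-edge recursion, count the $K_y$ left isolated to get the lower bound, use Hall's theorem via positive surplus to produce the terms of exact order $|Y|-|X|$ (respectively $|Y|-|X|-1$ when deleting $w\in K_z$), and then classify vertices by deleting them. Your choice of $F$ as all edges at $X$, rather than only the bipartite edges $E$, is cosmetic.

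The substantive difference is the non-cancellation step. The paper only says that the minimal-order terms carry the common sign $(-1)^{|E(M)|}$ and that the polynomials are monic. Monicity, however, says nothing about the sign of $m_{G\setminus(F\cup V(M))}(x)/(x-\theta)^{|Y|-|X|}$ at $x=\theta$, and it is that sign which must be uniform in $M$. Your observation supplies exactly this. Since $m_{K\setminus\{v\}}$ interlaces $m_K$, the ratio $m_{K\setminus\{v\}}/m_K$ has a positive residue at the simple root $\theta$ of a $\theta$-critical $K$. Hence $m_{K\setminus\{v\}}(\theta)$ has the sign of $m_K'(\theta)$ for every vertex $v$. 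Combined with the fixed nonzero factor $m_{\mathcal R}(\theta)$ (and $m_{K_z\setminus\{w\}}(\theta)$ in Step 2(a)), every leading coefficient is a fixed sign times a positive number. Your version therefore justifies a step that the paper's text leaves incomplete, and your fallback induction is not needed.

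Two small simplifications are available:
\begin{itemize}
\item In 2(b) attainment is unnecessary. The bound $\mathrm{mult}_\theta(G\setminus\{w\})\ge |Y|-|X|+1=\mathrm{mult}_\theta(G)+1$ already forces $w$ to be positive, because deleting a vertex changes the multiplicity by at most one.
\item In 2(c) the term-wise bound $\ge |Y|-|X|$ alone shows $w$ is not essential, so the appeal to interlacing there is superfluous.
\end{itemize}

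Finally, you make explicit that each $x\in X$ has a neighbour in some $K_y$, which is needed for $\partial D_\theta=X$ and which the paper leaves implicit.
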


\begin{proof}
We first show that the multiplicity of $\theta$ as a root of the matching polynomial of $G$ is $|Y|-|X|$. To this end, we use \eqref{eq:multiedgerecursion} with $F=E$ the edge set of the starting bipartite graph $(X\sqcup Y,E)$, so that
    \[ m_G(x)=\sum_{M \in \mathcal{M}_G,\; E(M) \subset F} (-1)^{|E(M)|}\prod_{e\in \vec{E}(M)}w_e\cdot m_{G\setminus (F\cup V(M))}(x).\]
If \(M\) only contains vertices from at most $|X|-1$ of the $K_y$ components, then \(G\setminus (F\cup V(M))\) contains at least \(|Y|-|X|+1\) of the $K_y$ components which, because they are now isolated, are $\theta$-critical and have $\theta$-multiplicity equal to $1$ by Theorem \ref{thm:generalisedGallaiEdmonds}. Thus for all such \(M\), the multiplicity of \(m_{G\setminus (F\cup V(M))}\) at \(\theta\) is at least \(|Y|-|X|+1\). 

The remaining matchings $M$ visit \(|X|\) of the $K_y$ components and cover all vertices of \(X\). Since $(X\sqcup Y,E)$ is of positive surplus from \(X\), such a matching exists. For all such \(M\), components of \(G\setminus (F\cup V(M))\) are either the graphs \(H\) added in the third part of the construction, components \(K_{y}\) with one vertex removed, or the \(|Y|-|X|\) components \(K_y\) that \(M\) doesn't touch. By construction, \(\theta\) is not a root of the matching polynomial of the first two cases; thus, the multiplicity of \(m_{G\setminus (F\cup V(M))}\) at \(\theta\) is exactly \(|Y|-|X|\). Since all of these matchings $M$ give polynomial factors with the same sign \((-1)^{|E(M)|}\) and the polynomials themselves are monic, they don't cancel out; thus the multiplicity of \(m_G\) at \(\theta\) is exactly \(|Y|-|X|\).

We now show that the vertices in the $K_y$ are $\theta$-critical in $G$. To this end, fix some vertex \(u\in K_z\) for some $z\in Y$. Let \(F\) now be the edges between \(X\) and vertices from \(\bigsqcup_{y\in Y}K_y\setminus\{u\}\) so that by \eqref{eq:multiedgerecursion}, 
\[ m_{G\setminus\{u\}}(x)=\sum_{M \in \mathcal{M}_G,\; E(M) \subset F} (-1)^{|E(M)|}\prod_{e\in \vec{E}(M)}w_e\cdot m_{G\setminus( F\cup V(M)\cup \{u\})}(x).\]
Note that if \(M\) contains vertices from at most \(|X|-1\) of the \(K_y\) outside of \(K_z\), \(z\neq y\in Y\), then \(G\setminus( F\cup V(M)\cup \{u\})\) contains at least \(|Y|-|X|\) of the $K_y$ without any vertices removed, and so the multiplicity of \(m_{G\setminus( F\cup V(M)\cup \{u\})}\) at \(\theta\) is at least \(|Y|-|X|\). On the other hand, there always exists a matching \(M\) that contains vertices from \(|X|\) of the \(K_y\) components outside of \(K_z\) since the original bipartite graph has positive surplus from \(X\). Such a matching must covers all vertices from \(X\). A component of \(G\setminus( F\cup V(M)\cup \{u\})\) is then either  a component of some \(K_y\) after the removal of a vertex, one of the graphs \(H\), or one of the \(|Y|-|X|-1\) remaining untouched $K_y$. As before, this means that the multiplicity of \(m_{G\setminus( F\cup V(M)\cup \{u\})}\) at \(\theta\) is \(|Y|-|X|-1\). Together, this means that $m_{G\setminus\{u\}}$ has multiplicity $|Y|-|X|-1$ at $\theta$ and so $u$ is $\theta$-critical.

The same argument also shows that after deleting any vertex from the vertex sets of \(H\) or \(X\), the multiplicity is at least \(|Y|-|X|\) (actually \(|Y|-|X|+1\) in the latter case). This means that the vertices in these sets are either $\theta$-positive or $\theta$-neutral. Together, this means that the $K_y$ are the $\theta$-critical components of $G$ and $X$ forms the boundary.
\end{proof}

\bibliographystyle{abbrv}
\bibliography{multi}

\begin{tabular}{@{}l@{}}
Charles Bordenave \\
I2M, CNRS, Aix-Marseille Université, 
Marseille, France\\
\texttt{charles.bordenave@cnrs.fr}
\end{tabular}

\vspace{1em}

\begin{tabular}{@{}l@{}}
Wenbo Li \\
School of Mathematical Sciences,
University of Science and Technology of China, 
Hefei, China \\
\texttt{patlee@mail.ustc.edu.cn}
\end{tabular}

\vspace{1em}

\begin{tabular}{@{}l@{}}
Joe Thomas \\
Department of Mathematical Sciences,
Durham University, 
Durham, UK \\
\texttt{joe.thomas@durham.ac.uk}
\end{tabular}
\end{document}